\documentclass[11pt, reqno]{amsart}

\usepackage{amsmath, amsthm, amscd, amsfonts, amssymb, graphicx, color, mathtools, mathrsfs}
\usepackage[bookmarksnumbered, colorlinks, plainpages]{hyperref}
\usepackage{enumerate}
\usepackage{setspace}
\usepackage{multicol}
\usepackage[margin=1in]{geometry}
\usepackage{comment}
\usepackage{booktabs}
\usepackage{caption}
\usepackage{subcaption}
\usepackage{pgfplots}
\usepackage{algorithm}

\newcounter{lititem}[subsection]

\theoremstyle{definition}
\newtheorem{theorem}{Theorem}[section]
\newtheorem{lemma}[theorem]{Lemma}
\newtheorem{proposition}[theorem]{Proposition}
\newtheorem{assumption}[theorem]{Assumption}
\newtheorem{corollary}[theorem]{Corollary}

\numberwithin{equation}{section}

\makeatletter
\let\c@algorithm\c@theorem

\@ifundefined{theHtheorem}{}{}
\makeatother

\newcommand{\bff}{\boldsymbol}
\newcommand{\bb}{\mathbb}

\newcommand{\dbeta}{\mathrm{d}\beta}
\newcommand{\dt}{\mathrm{d}t}

\newcommand{\dx}{\mathrm{d}x}
\newcommand{\ds}{\mathrm{d}s}

\newcommand{\dW}{\mathrm{d}W}

\newcommand{\norm}[2]{\left\|{#1}\right\|_{#2}}
\newcommand{\inpro}[2]{\left\langle#1,#2\right\rangle}

\allowdisplaybreaks

\begin{document}
	\setcounter{page}{1}
	
	\title[Rate of convergence of a midpoint scheme for the sLLG equation]
	{Rate of convergence of a fully discrete structure-preserving midpoint scheme for the stochastic Landau--Lifshitz--Gilbert equation}
	
	\author[Agus L. Soenjaya]{Agus L. Soenjaya}
	\address{Institute of Analysis and Scientific Computing, TU Wien, Wiedner Hauptstrasse 8--10, 1040 Vienna, Austria}
	\email{\textcolor[rgb]{0.00,0.00,0.84}{agus.soenjaya@asc.tuwien.ac.at}}
	
	\thanks{\textbf{Acknowledgment.} This research was initiated while the author was supported by the Commonwealth through an Australian Government Research Training Program (RTP) Scholarship (\href{https://doi.org/10.82133/C42F-K220}{DOI: 10.82133/C42F-K220}) at UNSW Sydney. The author has also been supported by an Australian Mathematical Society Lift-off Fellowship. The work was completed during the author's postdoctoral appointment at the Institute of Analysis and Scientific Computing of TU Wien, supported by the Austrian Science Fund (FWF) through the international project I6802 ``Functional error estimates for PDEs on unbounded domains'' (\href{https://doi.org/10.55776/I6802}{DOI: 10.55776/I6802}), led by Prof. Dirk Praetorius.}
	
	\begin{abstract}
		The stochastic Landau--Lifshitz--Gilbert (sLLG) equation is a strongly nonlinear stochastic PDE with a non-convex pointwise constraint arising in the theory of micromagnetics. We analyse a fully discrete, structure-preserving finite element approximation of the sLLG equation with coloured multiplicative Stratonovich noise on a bounded interval. The method utilises continuous piecewise affine finite elements, mass lumping, and midpoint time discretisation to preserve the unit-length constraint exactly at the finite element nodes. Under suitable regularity assumptions on the initial data and the noise, we establish uniform higher-moment stability and develop an error analysis for the scheme. The analysis exploits the geometric structure of the equation and the stochastic midpoint discretisation. For every $\gamma\in(0,\frac12)$, we prove first-order spatial convergence and temporal convergence of order $\gamma$ in the natural discrete energy norm, locally in mean square on events of arbitrarily large probability and,
		consequently, in probability. To the best of our knowledge, this is the first convergence-rate result for a fully discrete structure-preserving finite element scheme solving the stochastic Landau--Lifshitz--Gilbert equation.
	\end{abstract}
	\maketitle
	
\section{Introduction}

The Landau--Lifshitz--Gilbert (LLG) equation is a fundamental model for the
dynamics of the magnetisation in ferromagnetic materials; see, e.g.,~\cite{BanBrzNekPro13, GuoDing08, Lak11, Pro01} and the references therein. At nonzero temperature, thermal fluctuations may be incorporated through a stochastic magnetic field, leading to the stochastic Landau--Lifshitz--Gilbert (sLLG) equation. This gives a strongly nonlinear SPDE with a non-convex pointwise constraint, in which the multiplicative noise interacts nontrivially with the geometric cross-product structure. The noise is naturally interpreted in the Stratonovich sense, and is compatible with the pointwise constraint on the magnetisation since it acts tangentially to the unit sphere.

In this work, we consider an exchange-only sLLG model on the bounded interval
$D=(0,L)$. Let $T>0$ and let $(\Omega,\mathcal F,\{\mathcal F_t\}_{t\in[0,T]},\mathbb P)$
be a filtered probability space satisfying the usual conditions. For
$\alpha,\kappa>0$, the sLLG equation reads:
\begin{subequations}\label{eq:sllg-system}
	\begin{alignat}{2}
		&\mathrm d\bff m
		+
		\alpha\kappa
		\bff m\times
		\left(
		\bff m\times\partial_{xx}\bff m
		\right)\dt
		-
		\kappa
		\bff m\times\partial_{xx}\bff m\,\dt
		=
		\bff m\times\circ\,\dW
		&&\quad\text{in } (0,T)\times D,
		\label{eq:sllg-model}
		\\
		&\partial_x\bff m
		=\bff0
		&&\quad\text{on } (0,T)\times \partial D,
		\label{eq:sllg-neumann-boundary}
		\\
		&\bff m(0)
		=\bff m_0,
		&&\quad\text{in }D,
		\label{eq:sllg-initial-condition}
	\end{alignat}
\end{subequations}
where $\bff{m}:[0,T]\times D\times \Omega\to \bb{R}^3$ is the magnetisation.
Here, we assume $\bff m_0\in H^3(D;\mathbb S^2)$. In \eqref{eq:sllg-model}, the first two terms in the drift describe damping and precession, respectively, while the Stratonovich term models thermal fluctuations. Lower-order anisotropy field can also be added to \eqref{eq:sllg-model} without difficulties. We
consider the simplified multiplicative noise $\bff m\times\circ\,\dW$ in the sense of Stratonovich, as in~\cite{BanBrzPro13, BrzGolJeg12, GusHoc23}, and assume sufficient spatial regularity of the coloured Wiener process $W$ throughout the analysis. An important property of \eqref{eq:sllg-system} is that the magnetisation satisfies a non-convex geometric constraint
\begin{equation}\label{eq:exact-unit-length-sllg}
	|\bff m(t,x)|=1
	\qquad
	\text{for a.e. }(t,x)\in(0,T)\times D,
	\quad\mathbb P\text{-a.s.}
\end{equation}
Our analysis also applies to the one-dimensional flat torus with periodic boundary condition.

The restriction to one space dimension is motivated by both the analytical
theory and applications. From the analytical viewpoint, the global
pathwise well-posedness and maximal regularity required for a quantitative
error analysis in the present paper are available for the sLLG equation on an
interval; see, e.g., \cite{Dun15, GusHoc23}. In higher dimensions, the
analytical theory is substantially more delicate, owing to the possibility of singularity formation and the weaker regularity available for solutions. The one-dimensional setting is also a
natural model for thin ferromagnetic wires, and has been considered in a number of mathematical studies of stochastic
magnetisation dynamics~\cite{BanBrzPro13, BrzHauLi19}.

The deterministic LLG equation has been studied extensively from both
analytical and numerical viewpoints; see, e.g.,
\cite{AkrFeiKovLub21, AloSoy92, Cim08, FeiTra17, GuoDing08, Pro01} and the references therein. In particular, the midpoint scheme for the deterministic problem was first analysed in~\cite{BarPro06}. The stochastic problem is substantially more delicate. In addition to the nonlinearity and non-convex sphere constraint, the analysis must accommodate multiplicative
Stratonovich noise, limited solution regularity, and the interaction
between the stochastic forcing and geometric structure of the
equation~\cite{BanBrzNekPro13, BrzGolJeg12, BrzGolJeg17, GusHoc23}. Three types of convergent numerical methods for sLLG have been
developed: 
\begin{enumerate}[(i)]
	\item midpoint scheme introduced in~\cite{AquSerCopMay06, BanBrzPro13}, which preserves the unit-length constraint at nodal points and converges to a weak martingale solution along a subsequence~\cite{BanBrzNekPro14};
	\item tangent plane scheme~\cite{AloBouHoc14} on the Gilbert form, which extended the scheme in~\cite{Alo08} to the stochastic case;
	\item tangent plane scheme~\cite{GolLeTra16} on a random PDE which is obtained by means of the Doss--Sussmann transform.
\end{enumerate}
For the one-dimensional problem, Dunst~\cite{Dun15} subsequently proved convergence in probability with rate arbitrarily close to $1/2$ for the \emph{time-semidiscrete} midpoint scheme.

Extending such a rate estimate to a \emph{fully discrete} method is not immediate: the finite element approximation introduces mass-lumping and interpolation defects, while the discrete Laplacian and the stochastic midpoint term require estimates which have no direct counterpart in the time-semidiscrete
analysis. Indeed, the lack of suitable uniform control of the discrete
Laplacian was already identified in \cite{Dun15} as a main obstruction to
a fully discrete rate analysis. To the best of our knowledge, no
convergence rate has previously been proved for a fully discrete
structure-preserving approximation of the sLLG equation.
Even for deterministic LLG, rigorous optimal convergence rates towards strong
solutions for the corresponding midpoint scheme have
only been obtained recently~\cite{Soe26llg}. This highlights the difficulty
of quantitative error analysis for the midpoint scheme, even before additional complications caused by multiplicative Stratonovich noise are introduced.

The aim of this work is to close this gap for the one-dimensional sLLG
equation with coloured multiplicative noise. We analyse a mass-lumped
$P_1$ finite element discretisation combined with the midpoint rule in
time, which preserves the unit-length constraint exactly at every
finite-element node. The main difficulty is to obtain quantitative error
bounds without destroying this geometric structure. To this end, we first
derive uniform higher-moment estimates for the discrete exchange energy
and time increments. We then develop a localised error analysis in which
the deterministic drift consistency error and the stochastic midpoint
defect are treated separately, since the latter requires a delicate control of diagonal and off-diagonal quadratic Brownian increments. Under suitable initial data and noise regularity assumptions, this yields a fully discrete convergence rate in probability of order $h+k^\gamma$ for every $\gamma<\frac12$ in the energy norm. In particular, this provides the first rigorous convergence rate result for a fully discrete structure-preserving scheme for sLLG.

Our main result shows that, under suitable regularity assumptions on the
initial data and the noise, for every $\gamma\in(0,\frac12)$ and every
fixed localisation level $\Lambda>0$,
\[
\mathbb E\left[
\mathbf1_{\Omega_{\Lambda,J}}
\max_{0\le j\le J}
\norm{\bff m(t_j)-\bff m_h^j}{\bb L^2}^2
\right]
+
\mathbb E\left[
\mathbf1_{\Omega_{\Lambda,J}}
k\sum_{j=0}^{J-1}
\norm{
	\partial_x
	\bigl(
	\bff m(t_{j+1})-\bff m_h^{j+1}
	\bigr)
}{\bb L^2}^2
\right]
\le
C_{\Lambda,\gamma}
\left(
h^2+k^{2\gamma}
\right).
\]
Together with the probability estimate for the localisation sets, this
implies a convergence rate of order $h+k^\gamma$, for every
$\gamma<\frac12$, in probability in the discrete
$L^\infty(0,T;\bb L^2)\cap L^2(0,T;\bb H^1)$ energy norm. Thus, the
spatial discretisation does not deteriorate the temporal rate obtained
for the time-semidiscrete midpoint method in \cite{Dun15}, while yielding
simultaneously an optimal spatial rate in the $\bb{H}^1$-norm.
Since comprehensive numerical experiments for midpoint discretisation of sLLG are already available in \cite{BanBrzPro13, Dun15}, we do not repeat such simulations here and focus instead on the rigorous fully discrete convergence-rate analysis.

The remainder of the paper is organised as follows.
Section~\ref{sec:prelim} introduces the notation, standing assumptions, and
finite-element preliminaries. In Section~\ref{sec:stability-error}, we define
the fully discrete midpoint scheme, establish its nodal constraint
preservation and moment stability, and derive the consistency and localised
error estimates. Various technical discrete estimates used in the analysis are collected in the appendix.

\section{Preliminaries}\label{sec:prelim}

In this section, we collect the notation, assumption, and finite element tools used throughout the paper.

\subsection{Notation}\label{subsec:notation}

For $p\in[1,\infty]$ and $s\ge0$, set
\[
\bb L^p:=L^p(D;\bb R^3),
\qquad
\bb W^{s,p}:=W^{s,p}(D;\bb R^3),
\qquad
\bb H^s:=\bb W^{s,2}.
\]
For a Banach space $X$, the spaces $L^p(0,T;X)$,
$W^{s,p}(0,T;X)$, and $C^\gamma([0,T];X)$ have their usual
vector-valued meanings. The operators $\partial_x$ and
$\partial_{xx}$ act componentwise.

The scalar product in a Hilbert space $H$ is denoted by
$\inpro{\cdot}{\cdot}_H$, with corresponding norm $\norm{\cdot}{H}$.
When $H=\bb L^2$, we simply write
\[
\inpro{\bff u}{\bff v}
:=
\int_D \bff u\cdot\bff v\,\dx .
\]
We use the same notation for vector- and matrix-valued functions whenever
the meaning is clear from the context.

Throughout, $C>0$ denotes a generic constant that may depend on fixed
problem data but is independent of the discretisation parameters $h$ and
$k$. Dependence on specific parameters is indicated when relevant.

\subsection{Assumption}\label{subsec:assum}

The following assumption on initial data and noise regularity is made throughout this paper.

\begin{assumption}\label{ass:noise-regularity-sllg}
	We assume that the initial data $\bff m_0\in H^3(D;\mathbb S^2)$ with $\partial_x \bff{m}_0=\bff{0}$. To describe the noise, let
	\begin{equation}\label{eq:noise-expansion-assumption-sllg}
		W(t)=\sum_{\ell=1}^{\infty}\bff g_\ell\beta_\ell(t),
	\end{equation}
	where $\{\beta_\ell\}_{\ell =1}^\infty$ are independent standard
	$\{\mathcal F_t\}$-Brownian motions and
	$\bff g_\ell\in \bb{H}^3$ are
	deterministic, such that
	\begin{equation}\label{eq:noise-W1inf-l1-assumption-sllg}
		\Gamma
		:=
		\sum_{\ell=1}^{\infty}
		\norm{\bff g_\ell}{\bb H^3}
		<\infty.
	\end{equation}
	We assume the compatibility condition $\partial_x\bff g_\ell=\bff 0$ on $\partial D$ for all $\ell\ge 1$.
	This boundary compatibility condition is only needed to ensure that the
	stochastic vector fields preserve the homogeneous Neumann condition of a sufficiently regular solution.
\end{assumption}

Assumption~\ref{ass:noise-regularity-sllg} guarantees the existence of a strong pathwise solution to the stochastic LLG equation~\eqref{eq:sllg-system} with the following regularity:
for every $q\ge1$ and every $\gamma\in(0,\frac12)$,
\begin{equation}\label{eq:exact-solution-regularity-sllg}
	\mathbb E\left[
	\norm{\bff m}{L^\infty(0,T;\bb H^3)}^{2q}
	+
	\norm{\bff m}{C^\gamma([0,T];\bb H^2)}^{2q}
	\right]
	\le
	C_{q,\gamma}.
\end{equation}
Such pathwise well-posedness and regularity are established in~\cite{Dun15} for the periodic problem; also see~\cite{GusHoc23}. The corresponding Neumann-boundary result follows by the same argument under the compatibility conditions imposed above.

\subsection{Finite element approximation}\label{subsec:fe-prelim-stochastic}

Let $\{\mathcal T_h\}_{h>0}$ be a family of shape-regular and quasi-uniform
partitions of $D$ into intervals, with maximal mesh-size $h>0$. We denote by
$\mathcal N_h$ the set of nodes of $\mathcal T_h$. We use the lowest-order
conforming Lagrange space
\begin{equation}\label{eq:Vh-def-sllg}
	\bb V_h
	:=
	\left\{
	\bff\phi_h\in C^0(\overline D;\bb R^3)
	:
	\bff\phi_h|_K\in\mathcal P_1(K;\bb R^3),
	\;\forall K\in\mathcal T_h
	\right\}
	\subset\bb H^1.
\end{equation}
We equip $\bb V_h$ with the norm
\begin{equation}\label{eq:discrete-H1-norm-sllg}
	\norm{\bff\phi_h}{1,h}
	:=
	\norm{\bff\phi_h}{h}
	+
	\norm{\partial_x\bff\phi_h}{\bb L^2}.
\end{equation}
For a functional $\mathcal R\in\bb V_h'$, we define the corresponding dual
norm by
\begin{equation}\label{eq:dual-Hminus1h-def-sllg}
	\norm{\mathcal R}{-1,h}
	:=
	\sup_{\bff0\ne\bff\phi_h\in\bb V_h}
	\frac{
		|\mathcal R(\bff\phi_h)|
	}{
		\norm{\bff\phi_h}{1,h}
	}.
\end{equation}

Let $\{\varphi_z\}_{z\in\mathcal N_h}$ be the scalar nodal basis and denote
by $I_h$ the nodal interpolation operator, for both scalar- and
vector-valued functions. Since $D\subset\bb R$ is a bounded interval,
$\bb H^1(D)\hookrightarrow C^0(\overline D)$. For continuous vector fields
$\bff u,\bff v\in C^0(\overline D;\bb R^3)$, define the mass-lumped inner
product by
\begin{equation}\label{eq:mass-lumped-inner-product-sllg}
	\inpro{\bff u}{\bff v}_h
	:=
	\int_D I_h(\bff u\cdot\bff v)\,\dx
	=
	\sum_{z\in\mathcal N_h}
	\beta_z\,\bff u(z)\cdot\bff v(z),
	\qquad
	\beta_z:=\int_D\varphi_z\,\dx .
\end{equation}
We write
\begin{equation}\label{eq:lumped-norm-def-sllg}
	\norm{\bff v_h}{h}^2
	:=
	\inpro{\bff v_h}{\bff v_h}_h .
\end{equation}
The norm $\norm{\cdot}{h}$ is uniformly equivalent to the standard
$\bb L^2$-norm on $\bb V_h$, namely
\begin{equation}\label{eq:lumped-L2-equivalence-sllg}
	c\norm{\bff v_h}{\bb L^2}^2
	\le
	\norm{\bff v_h}{h}^2
	\le
	C\norm{\bff v_h}{\bb L^2}^2,
	\qquad
	\forall\bff v_h\in\bb V_h,
\end{equation}
where $c,C>0$ are independent of $h$. The following discrete Agmon
inequality is a consequence of \eqref{eq:lumped-L2-equivalence-sllg} and
the standard one-dimensional Agmon inequality:
\begin{equation}\label{eq:discrete-agmon-sllg}
	\norm{\bff v_h}{\bb L^\infty}^2
	\le
	C\norm{\bff v_h}{h}\norm{\partial_x\bff v_h}{\bb L^2}
	+
	C\norm{\bff v_h}{h}^2,
	\qquad
	\forall\bff v_h\in\bb V_h.
\end{equation}

We further define the mass-lumped discrete Laplacian
$\Delta_h:\bb V_h\to\bb V_h$ by
\begin{equation}\label{eq:def-discrete-laplacian-sllg}
	\inpro{\Delta_h\bff v_h}{\bff\chi_h}_h
	=
	-\inpro{\partial_x\bff v_h}{\partial_x\bff\chi_h},
	\qquad
	\forall\bff v_h,\bff\chi_h\in\bb V_h.
\end{equation}

We shall use the following standard approximation and inverse estimates.
For each $s\in[0,1]$, there exists $C>0$ such that, for all
$\bff v\in\bb H^{1+s}$,
\begin{equation}\label{eq:Ih-H1-approx-sllg}
	\norm{\bff v-I_h\bff v}{\bb L^2}
	+
	h\norm{\partial_x(\bff v-I_h\bff v)}{\bb L^2}
	\le
	Ch^{1+s}\norm{\bff v}{\bb H^{1+s}}.
\end{equation}
In one-dimensional domain, we have the $\bb{H}^1$-stability estimate:
\begin{equation}\label{eq:Ih-stability}
	\norm{I_h\bff v}{\bb H^1}
	\le
	C\norm{\bff v}{\bb H^1}.
\end{equation}
Moreover, for every $\bff v_h\in\bb V_h$, $s\in\{0,1\}$, and
$1\le q\le p\le\infty$, the inverse estimate
\begin{equation}\label{eq:inverse-estimate-sllg}
	\norm{\bff v_h}{\bb W^{s,p}}
	\le
	Ch^{-\left(\frac1q-\frac1p\right)}
	\norm{\bff v_h}{\bb W^{s,q}}
\end{equation}
holds with $C$ independent of $h$.

In $\bb V_h$, we have the following quadrature estimates associated with
the mass-lumped inner product~\cite{BarPro06}:
\begin{alignat}{2}
	\label{eq:lumped-quadrature-Hminus1-sllg}
	\left|
	\inpro{\bff v_h}{\bff\chi_h}_h
	-
	\inpro{\bff v_h}{\bff\chi_h}
	\right|
	&\le
	Ch\norm{\bff v_h}{\bb L^2}
	\norm{\bff\chi_h}{\bb H^1},
	\qquad
	&&\forall\bff v_h,\bff\chi_h\in\bb V_h,
	\\
	\label{eq:smooth-lumped-quadrature-sllg}
	\left|
	\inpro{I_h\bff f}{\bff\chi_h}_h
	-
	\inpro{\bff f}{\bff\chi_h}
	\right|
	&\le
	Ch\norm{\bff f}{\bb H^1}
	\norm{\bff\chi_h}{1,h},
	\qquad
	&&\forall\bff f\in\bb H^1,\ \bff\chi_h\in\bb V_h.
\end{alignat}
We shall also repeatedly use the following elementary product estimates~\cite{Soe26llg}:
for all $\bff v_h,\bff w_h\in\bb V_h$,
\begin{align}
	\norm{I_h(\bff v_h\times\bff w_h)}{\bb L^2}
	&\le
	C\norm{\bff v_h}{\bb L^\infty}
	\norm{\bff w_h}{\bb L^2},
	\label{eq:Ih-product-L2-sllg}
	\\
	\norm{\partial_xI_h(\bff v_h\times\bff w_h)}{\bb L^2}
	&\le
	C\left(
	\norm{\bff v_h}{\bb L^\infty}
	\norm{\partial_x\bff w_h}{\bb L^2}
	+
	\norm{\bff w_h}{\bb L^\infty}
	\norm{\partial_x\bff v_h}{\bb L^2}
	\right),
	\label{eq:Ih-product-H1-sllg}
	\\
	\norm{\partial_xI_h(\bff v_h\times\bff w_h)}{\bb L^2}
	&\le
	C\left(
	\norm{\bff w_h}{\bb L^\infty}
	\norm{\partial_x\bff v_h}{\bb L^2}
	+
	\norm{\partial_x\bff w_h}{\bb L^\infty}
	\norm{\bff v_h}{h}
	\right).
	\label{eq:Ih-product-H1-L2refined-sllg}
\end{align}
Furthermore, noting~\eqref{eq:def-discrete-laplacian-sllg}, we have the following one-dimensional discrete cancellation estimate:
\begin{equation}\label{eq:discrete-cross-cancellation-sllg}
	\left|
	\inpro{\bff v_h\times\bff\eta_h}{-\Delta_h\bff v_h}_h
	\right|
	=
	\left|
	\inpro{
		\partial_xI_h(\bff v_h\times\bff\eta_h)
	}{
		\partial_x\bff v_h
	}
	\right|
	\le
	C
	\norm{\bff v_h}{\bb L^\infty}
	\norm{\partial_x\bff\eta_h}{\bb L^2}
	\norm{\partial_x\bff v_h}{\bb L^2}.
\end{equation}

We shall also need the following product-interpolation estimates.

\begin{lemma}[Product-interpolation estimates]
	\label{lem:product-interpolation-sllg}
	Let $\bff a\in\bb H^2$ and $\bff\phi_h\in\bb V_h$. Then
	\begin{align}
		\norm{
			\partial_x\left[
			I_h(\bff\phi_h\times I_h\bff a)
			-\bff\phi_h\times\bff a
			\right]
		}{\bb L^2}
		&\le
		Ch\norm{\bff a}{\bb H^2}\norm{\bff\phi_h}{1,h},
		\label{eq:product-interpolation-single-sllg}
		\\
		\norm{
			\partial_x\left[
			I_h\left(
			(\bff\phi_h\times I_h\bff a)\times I_h\bff a
			\right)
			-
			(\bff\phi_h\times\bff a)\times\bff a
			\right]
		}{\bb L^2}
		&\le
		Ch\norm{\bff a}{\bb H^2}^2\norm{\bff\phi_h}{1,h}.
		\label{eq:product-interpolation-double-sllg}
	\end{align}
\end{lemma}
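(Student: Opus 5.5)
We prove \eqref{eq:product-interpolation-single-sllg} by inserting the intermediate function $\bff\phi_h\times I_h\bff a$ and splitting
\[
I_h(\bff\phi_h\times I_h\bff a)-\bff\phi_h\times\bff a
=
\bigl[I_h(\bff\phi_h\times I_h\bff a)-\bff\phi_h\times I_h\bff a\bigr]
+
\bff\phi_h\times(I_h\bff a-\bff a).
\]
The second term is handled directly by the product rule. We have $\partial_x[\bff\phi_h\times(I_h\bff a-\bff a)]=\partial_x\bff\phi_h\times(I_h\bff a-\bff a)+\bff\phi_h\times\partial_x(I_h\bff a-\bff a)$. The first piece is bounded by $\norm{\partial_x\bff\phi_h}{\bb L^2}\norm{I_h\bff a-\bff a}{\bb L^\infty}$. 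In one dimension the interpolation error satisfies $\norm{\bff a-I_h\bff a}{\bb L^\infty}\le Ch^{3/2}\norm{\bff a}{\bb H^2}\le Ch\norm{\bff a}{\bb H^2}$, by the elementwise estimate $\norm{\cdot}{L^\infty(K)}\le Ch_K^{3/2}\norm{\partial_{xx}\bff a}{L^2(K)}$. The second piece is bounded by $\norm{\bff\phi_h}{\bb L^\infty}\norm{\partial_x(\bff a-I_h\bff a)}{\bb L^2}\le Ch\norm{\bff a}{\bb H^2}\norm{\bff\phi_h}{\bb L^\infty}$, using \eqref{eq:Ih-H1-approx-sllg} with $s=1$. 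The factor $\norm{\bff\phi_h}{\bb L^\infty}$ is controlled by $C\norm{\bff\phi_h}{1,h}$ through \eqref{eq:discrete-agmon-sllg} and Young's inequality.

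The first term is a pure interpolation defect, and this is the main point. Set $\bff b_h:=I_h\bff a\in\bb V_h$. On each element $K$ both factors $\bff\phi_h$ and $\bff b_h$ are affine, so $\bff\phi_h\times\bff b_h$ is quadratic on $K$ and $\partial_{xx}(\bff\phi_h\times\bff b_h)=2\,\partial_x\bff\phi_h\times\partial_x\bff b_h$ is constant there. The elementwise $H^1$ interpolation estimate then gives
\[
\norm{\partial_x(\mathrm{id}-I_h)(\bff\phi_h\times\bff b_h)}{L^2(K)}\le Ch_K\norm{\partial_x\bff\phi_h}{L^2(K)}\norm{\partial_x\bff b_h}{L^\infty(K)}.
\]
Summing over elements yields the bound $Ch\norm{\partial_x\bff\phi_h}{\bb L^2}\norm{\partial_x I_h\bff a}{\bb L^\infty}$. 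On each element $\partial_xI_h\bff a$ is the mean value of $\partial_x\bff a$ over that element, so $\norm{\partial_x I_h\bff a}{\bb L^\infty}\le\norm{\partial_x\bff a}{\bb L^\infty}\le C\norm{\bff a}{\bb H^2}$ by Sobolev embedding in one dimension. This proves \eqref{eq:product-interpolation-single-sllg}.

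For \eqref{eq:product-interpolation-double-sllg} we use the same telescoping argument. Write $\bff\psi:=\bff\phi_h\times\bff a$ and $\bff\psi_h:=I_h(\bff\phi_h\times I_h\bff a)\in\bb V_h$, and decompose
\[
I_h(\bff\psi_h\times I_h\bff a)-\bff\psi\times\bff a
=
\bigl[I_h(\bff\psi_h\times I_h\bff a)-\bff\psi_h\times\bff a\bigr]
+
(\bff\psi_h-\bff\psi)\times\bff a.
\]
The first bracket has exactly the form of \eqref{eq:product-interpolation-single-sllg} with $\bff\phi_h$ replaced by $\bff\psi_h$, so it is bounded by $Ch\norm{\bff a}{\bb H^2}\norm{\bff\psi_h}{1,h}$. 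To estimate $\norm{\bff\psi_h}{1,h}$, combine \eqref{eq:Ih-product-L2-sllg} and \eqref{eq:Ih-product-H1-sllg} with the uniform bounds $\norm{I_h\bff a}{\bb L^\infty}+\norm{\partial_xI_h\bff a}{\bb L^2}\le C\norm{\bff a}{\bb H^2}$, and again use \eqref{eq:discrete-agmon-sllg} for $\norm{\bff\phi_h}{\bb L^\infty}$. This gives $\norm{\bff\psi_h}{1,h}\le C\norm{\bff a}{\bb H^2}\norm{\bff\phi_h}{1,h}$.

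For the second term, the product rule gives
\[
\norm{\partial_x[(\bff\psi_h-\bff\psi)\times\bff a]}{\bb L^2}\le\norm{\partial_x(\bff\psi_h-\bff\psi)}{\bb L^2}\norm{\bff a}{\bb L^\infty}+\norm{\bff\psi_h-\bff\psi}{\bb L^\infty}\norm{\partial_x\bff a}{\bb L^\infty}.
\]
The factor $\norm{\partial_x(\bff\psi_h-\bff\psi)}{\bb L^2}$ is exactly the quantity bounded in \eqref{eq:product-interpolation-single-sllg}. For the $\bb L^\infty$ factor, apply the one-dimensional Agmon inequality to $\bff\psi_h-\bff\psi$, using the $\bb L^2$ bound $\norm{\bff\psi_h-\bff\psi}{\bb L^2}\le Ch\norm{\bff a}{\bb H^2}\norm{\bff\phi_h}{1,h}$. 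This $\bb L^2$ bound follows from the same splitting as above together with the elementwise $L^2$ interpolation estimate. The resulting estimate is $Ch\norm{\bff a}{\bb H^2}\norm{\bff\phi_h}{1,h}$, and multiplying by $\norm{\bff a}{\bb W^{1,\infty}}\le C\norm{\bff a}{\bb H^2}$ completes \eqref{eq:product-interpolation-double-sllg}. The only delicate step is making sure that no term produces a negative power of $h$; this is exactly why the interpolation defect must be estimated elementwise through the second derivative of the product rather than by inverse estimates.
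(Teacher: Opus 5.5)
Your proof is correct, but it is organised differently from the paper's. The paper uses the nodal identity $I_h\bff a(z)=\bff a(z)$ to rewrite $I_h(\bff\phi_h\times I_h\bff a)=I_h(\bff\phi_h\times\bff a)$ and $I_h((\bff\phi_h\times I_h\bff a)\times I_h\bff a)=I_h((\bff\phi_h\times\bff a)\times\bff a)$. Each difference is then a single elementwise interpolation error of $\bff f=\bff\phi_h\times\bff a$ (resp.\ $(\bff\phi_h\times\bff a)\times\bff a$). Since $\partial_{xx}\bff\phi_h=\bff 0$ on each element, $\norm{\partial_{xx}\bff f}{L^2(K)}$ involves only $\partial_x\bff\phi_h$, $\bff\phi_h$ and derivatives of $\bff a$ up to order two, so the double estimate is no harder than the single one.

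You instead insert $\bff\phi_h\times I_h\bff a$, so the interpolation estimate is applied only to a piecewise quadratic. The non-polynomial part goes into a product-rule term with $\bff a-I_h\bff a$, which needs the bound $\norm{\bff a-I_h\bff a}{\bb L^\infty}\le Ch^{3/2}\norm{\bff a}{\bb H^2}$. For the double estimate you telescope through $\bff\psi_h$ and reuse the single estimate, at the cost of extra bounds for $\norm{\bff\psi_h}{1,h}$ and $\norm{\bff\psi_h-\bff\psi}{\bb L^\infty}$. All of these steps check out. Your route is more modular; the paper's is considerably shorter.

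One point you should make explicit. Your decomposition of the double difference silently replaces $I_h((\bff\phi_h\times I_h\bff a)\times I_h\bff a)$ by $I_h(\bff\psi_h\times I_h\bff a)$. This holds only because $\bff\psi_h(z)=\bff\phi_h(z)\times I_h\bff a(z)$ at every node. That same nodal observation is exactly the paper's shortcut. It also shows that $\bff\psi_h-\bff\psi$ vanishes at all nodes, so $\norm{\bff\psi_h-\bff\psi}{L^\infty(K)}\le h_K^{1/2}\norm{\partial_x(\bff\psi_h-\bff\psi)}{L^2(K)}$ would let you skip the Agmon step.
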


\begin{proof}
	For every $z\in\mathcal N_h$, we have
	$I_h\bff a(z)=\bff a(z)$. Hence, the two differences are the
	elementwise interpolation errors of
	$\bff\phi_h\times\bff a$ and
	$(\bff\phi_h\times\bff a)\times\bff a$, respectively.
	The elementwise $\bb H^1$-interpolation estimate, the fact that
	$\bff\phi_h$ is affine on each element, and the one-dimensional
	embeddings $\bb H^1\hookrightarrow\bb L^\infty$ and $\bb H^2\hookrightarrow\bb W^{1,\infty}$
	yield the asserted bounds.
\end{proof}

Next, we record a moment estimate for the discrete Brownian increments.
Set
\begin{align}\label{eq:gh Wh}
	\bff g_{\ell,h}:=I_h\bff g_\ell,
	\qquad
	W_h(t):=\sum_{\ell=1}^{\infty}\bff g_{\ell,h}\beta_\ell(t).
\end{align}
For $j=0,\ldots,J-1$, define
\[
\hat{\Delta}_j\beta_\ell
:=
\beta_\ell(t_{j+1})-\beta_\ell(t_j),
\qquad
\hat{\Delta}_jW_h
:=
W_h(t_{j+1})-W_h(t_j)
=
\sum_{\ell=1}^{\infty}
\bff g_{\ell,h}\hat{\Delta}_j\beta_\ell.
\]
Under Assumption~\ref{ass:noise-regularity-sllg}, for every $q\ge1$,
\begin{equation}\label{eq:noise-increment-moment-sllg}
	\mathbb E\left[
	\norm{\hat{\Delta}_jW_h}{\bb W^{1,\infty}}^{2q}
	\right]
	\le
	C_q\Gamma^{2q}k^q.
\end{equation}
Indeed, the $\bb W^{1,\infty}$-stability of $I_h$ and Minkowski's
inequality give
\begin{align*}
	\left(
	\mathbb E\left[
	\norm{\hat{\Delta}_jW_h}{\bb W^{1,\infty}}^{2q}
	\right]
	\right)^{\frac{1}{2q}}
	&\le
	\sum_{\ell=1}^{\infty}
	\norm{I_h\bff g_\ell}{\bb W^{1,\infty}}
	\left(
	\mathbb E|\hat{\Delta}_j\beta_\ell|^{2q}
	\right)^{\frac{1}{2q}}
	\le
	C_q\Gamma k^{\frac12}.
\end{align*}

\section{Stability and error estimates of the fully discrete midpoint scheme}\label{sec:stability-error}

Let $0=t_0<t_1<\cdots<t_J=T$ be the uniform partition of $[0,T]$
with time-step $k=T/J$.
The fully discrete mass-lumped midpoint approximation is defined by the
following algorithm.

\begin{algorithm}
	\raggedright
	\caption{Mass-lumped midpoint scheme}
	\label{alg:midpoint-sllg}
	
	\textbf{Initialization.}
	Set $\bff m_h^0:=I_h\bff m_0$.
	
	\textbf{Time-stepping.}
	For $j=0,\ldots,J-1$, given $\bff m_h^j\in\bb V_h$, find
	$\bff m_h^{j+1}\in\bb V_h$ such that, with
	\begin{equation}\label{eq:midpoint-def-sllg}
		\bff m_h^{j+\frac12}
		:=
		\frac12(\bff m_h^{j+1}+\bff m_h^j),
	\end{equation}
	there holds, for every $\bff\phi_h\in\bb V_h$,
	\begin{align}
		&\inpro{\bff m_h^{j+1}-\bff m_h^j}{\bff\phi_h}_h
		+
		\alpha\kappa k
		\inpro{
			\bff m_h^{j+\frac12}
			\times
			\left[
			\bff m_h^{j+\frac12}
			\times\Delta_h\bff m_h^{j+1}
			\right]
		}{\bff\phi_h}_h
		\nonumber\\
		&\qquad-
		\kappa k
		\inpro{
			\bff m_h^{j+\frac12}
			\times\Delta_h\bff m_h^{j+1}
		}{\bff\phi_h}_h
		=
		\inpro{
			\bff m_h^{j+\frac12}
			\times\hat{\Delta}_jW_h
		}{\bff\phi_h}_h.
		\label{eq:fully-discrete-midpoint-sllg-laplacian}
	\end{align}
\end{algorithm}

Existence of an adapted solution $\{\bff m_h^j\}_{j=0}^{J}$ to Algorithm~\ref{alg:midpoint-sllg} was shown in~\cite{BanBrzNekPro14}.
In the subsequent analysis, we assume $k\in (0,1]$, which entails no restriction in the asymptotic error analysis.

\subsection{Stability estimates}\label{subsec:stability-estimates}

We first record the nodal constraint and the corresponding $\bb{L}^\infty$-bound.

\begin{lemma}[Nodal constraint and $\bb{L}^\infty$-bound]\label{lem:nodal-constraint-sllg}
Assume that $|\bff m_h^0(z)|=1$ for all $z\in\mathcal N_h$.
Then, for every $j=0,1,\ldots,J$, every solution of
\eqref{eq:fully-discrete-midpoint-sllg-laplacian} satisfies
\begin{equation}\label{eq:nodal-constraint-sllg}
    |\bff m_h^j(z)|=1,
    \qquad
    \forall z\in\mathcal N_h,
    \quad\mathbb P\text{-a.s.}
\end{equation}
Consequently, for every $j=0,1,\ldots,J$,
\begin{equation}\label{eq:Linfty-stability-sllg}
    \norm{\bff m_h^j}{\bb L^\infty}\le 1,
    \qquad
    \mathbb P\text{-a.s.}
\end{equation}
\end{lemma}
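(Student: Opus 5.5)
We argue by induction on $j$, testing \eqref{eq:fully-discrete-midpoint-sllg-laplacian} with a nodal test function. Fix $z\in\mathcal N_h$ and choose $\bff\phi_h:=\varphi_z\,\bff m_h^{j+\frac12}(z)\in\bb V_h$, i.e.\ the piecewise affine field equal to the constant vector $\bff m_h^{j+\frac12}(z)$ at node $z$ and vanishing at all other nodes. By \eqref{eq:mass-lumped-inner-product-sllg}, each lumped inner product collapses to a single nodal evaluation weighted by $\beta_z>0$. Thus $\inpro{\bff u_h}{\bff\phi_h}_h=\beta_z\,\bff u_h(z)\cdot\bff m_h^{j+\frac12}(z)$ for any $\bff u_h\in\bb V_h$.

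Next we evaluate each term at $z$. Every nonlinear term is the interpolant of a cross product with $\bff m_h^{j+\frac12}$ as its first factor. This holds for $\bff m_h^{j+\frac12}\times[\bff m_h^{j+\frac12}\times\Delta_h\bff m_h^{j+1}]$, for $\bff m_h^{j+\frac12}\times\Delta_h\bff m_h^{j+1}$, and for $\bff m_h^{j+\frac12}\times\hat\Delta_jW_h$. Lumping only sees nodal values, so at $z$ each of these is a vector of the form $\bff m_h^{j+\frac12}(z)\times\bff a$ with $\bff a\in\bb R^3$. Its dot product with $\bff m_h^{j+\frac12}(z)$ therefore vanishes. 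Note that $\Delta_h\bff m_h^{j+1}$ is a genuine element of $\bb V_h$ by \eqref{eq:def-discrete-laplacian-sllg}, and $\hat\Delta_jW_h\in\bb V_h$ because $\bff g_{\ell,h}=I_h\bff g_\ell$ and the series converges (e.g.\ in $\bb W^{1,\infty}$, by \eqref{eq:noise-increment-moment-sllg}, $\mathbb P$-a.s.). Hence their nodal values are well-defined, and only the first term survives:
\[
0=\beta_z\bigl(\bff m_h^{j+1}(z)-\bff m_h^j(z)\bigr)\cdot\tfrac12\bigl(\bff m_h^{j+1}(z)+\bff m_h^j(z)\bigr)=\tfrac{\beta_z}{2}\bigl(|\bff m_h^{j+1}(z)|^2-|\bff m_h^j(z)|^2\bigr).
\]
Since $\beta_z>0$, we get $|\bff m_h^{j+1}(z)|=|\bff m_h^j(z)|$, and induction from the hypothesis on $\bff m_h^0$ gives \eqref{eq:nodal-constraint-sllg}. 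This holds pathwise for every $\omega$ for which the scheme has a solution, hence $\mathbb P$-a.s.

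For \eqref{eq:Linfty-stability-sllg}, we use that on each element $K$ the function $\bff m_h^j|_K$ is affine, hence a convex combination of its two nodal values. Because the Euclidean norm is convex, $|\bff m_h^j(x)|\le\max_{z\in K\cap\mathcal N_h}|\bff m_h^j(z)|=1$ for every $x\in K$. Taking the supremum over all elements gives the bound.

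There is no substantial obstacle. The one point needing care is to justify that the lumped products with the nonlinear terms really reduce to nodal cross products. This follows because $(\cdot,\cdot)_h$ involves only nodal values, and the nodal values of the product expressions are exactly the pointwise cross products of the nodal values.
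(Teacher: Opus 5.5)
Your proposal is correct and follows essentially the same argument as the paper. You test with $\varphi_z\,\bff m_h^{j+\frac12}(z)$, use the diagonal structure of the mass-lumped product and $\beta_z>0$ to kill all cross-product terms, obtain $|\bff m_h^{j+1}(z)|^2=|\bff m_h^j(z)|^2$ and induct, then deduce the $\bb L^\infty$-bound from the elementwise convex-combination argument.
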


\begin{proof}
Fix a time level $j$ and a node $z\in\mathcal N_h$. Choose
$\bff\phi_h=\varphi_z\bff m_h^{j+\frac12}(z)$ in the discrete scheme.
Since the mass-lumped inner product is diagonal and $\beta_z>0$, all
cross-product terms vanish and
\[
\begin{aligned}
    0
    &=
    \beta_z
    \bigl(\bff m_h^{j+1}(z)-\bff m_h^j(z)\bigr)
    \cdot\bff m_h^{j+\frac12}(z)
    =
    \frac{\beta_z}{2}
    \left(
      |\bff m_h^{j+1}(z)|^2-|\bff m_h^j(z)|^2
    \right).
\end{aligned}
\]
This implies~\eqref{eq:nodal-constraint-sllg}. On every element,
$\bff m_h^j$ is a convex combination of its two nodal values. Hence,
$|\bff m_h^j(x)|\le1$ for all $x\in\overline D$, which proves
\eqref{eq:Linfty-stability-sllg}.
\end{proof}

We start with the following estimate for the stochastic term.

\begin{lemma} \label{lem:BDG-stochastic-energy-input}
Let $p\ge2$ and let $\{\bff m_h^j\}_{j=0}^{J}$ be an adapted solution of \eqref{eq:fully-discrete-midpoint-sllg-laplacian}.
Then
\begin{align}
    &\mathbb E\left[
      \max_{0\le r\le J}
      \left|
        \sum_{j=0}^{r-1}
        \kappa
        \inpro{
          \partial_xI_h(\bff m_h^j\times\hat{\Delta}_jW_h)
        }{\partial_x\bff m_h^j}
      \right|^p
    \right]
    \le
    C_{p,T,\kappa,\Gamma}
    \left(
      1+
      \sum_{j=0}^{J-1}
      k\,\mathbb E\left[
        \norm{\partial_x\bff m_h^j}{\bb L^2}^{2p}
      \right]
    \right),
    \label{eq:BDG-stochastic-energy-input-final}
\end{align}
where $C_{p,T,\kappa,\Gamma}$ is independent of $h$ and $k$.
\end{lemma}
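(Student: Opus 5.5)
The plan is to recognise the sum as a discrete martingale and apply the discrete Burkholder--Davis--Gundy inequality. Write
\[
M_r:=\sum_{j=0}^{r-1}\xi_j,\qquad
\xi_j:=\kappa\inpro{\partial_xI_h(\bff m_h^j\times\hat{\Delta}_jW_h)}{\partial_x\bff m_h^j}
=\kappa\sum_{\ell=1}^\infty \hat{\Delta}_j\beta_\ell\,
\inpro{\partial_xI_h(\bff m_h^j\times\bff g_{\ell,h})}{\partial_x\bff m_h^j}.
\]
Since $\bff m_h^j$ is $\mathcal F_{t_j}$-measurable (adaptedness) and $\hat{\Delta}_j\beta_\ell$ is independent of $\mathcal F_{t_j}$ with mean zero, we have $\mathbb E[\xi_j\mid\mathcal F_{t_j}]=0$. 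Hence $(M_r)_r$ is an $\{\mathcal F_{t_r}\}$-martingale. Integrability follows from Lemma~\ref{lem:nodal-constraint-sllg} together with the inverse estimate, which bounds $\norm{\partial_x\bff m_h^j}{\bb L^2}\le Ch^{-1/2}$ pathwise for fixed $h$, and from \eqref{eq:noise-increment-moment-sllg}.

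The discrete BDG inequality gives
\[
\mathbb E\Bigl[\max_r|M_r|^p\Bigr]\le C_p\,\mathbb E\Bigl[\Bigl(\sum_{j=0}^{J-1}\xi_j^2\Bigr)^{p/2}\Bigr].
\]
To bound each increment, use the cancellation estimate \eqref{eq:discrete-cross-cancellation-sllg} with $\bff v_h=\bff m_h^j$ and $\bff\eta_h=\hat{\Delta}_jW_h$, combined with $\norm{\bff m_h^j}{\bb L^\infty}\le1$ from Lemma~\ref{lem:nodal-constraint-sllg}. This yields
\[
|\xi_j|\le C\kappa\,\norm{\partial_x\hat{\Delta}_jW_h}{\bb L^2}\,\norm{\partial_x\bff m_h^j}{\bb L^2}.
\]
This linear dependence on $\norm{\partial_x\bff m_h^j}{\bb L^2}$, rather than a quadratic one, is what makes the final bound work.

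Next apply Hölder's (Jensen's) inequality to the sum of $J=T/k$ terms:
\[
\Bigl(\sum_j\xi_j^2\Bigr)^{p/2}\le J^{p/2-1}\sum_j|\xi_j|^p .
\]
Then $\mathbb E|\xi_j|^p\le C\,\mathbb E\bigl[\norm{\partial_x\hat{\Delta}_jW_h}{\bb L^2}^p\norm{\partial_x\bff m_h^j}{\bb L^2}^p\bigr]$. Condition on $\mathcal F_{t_j}$ and use independence of the increment. The factor $\bff m_h^j$ is measurable and $\hat{\Delta}_jW_h$ is independent of $\mathcal F_{t_j}$, so by \eqref{eq:noise-increment-moment-sllg} this equals $C\,\mathbb E\norm{\partial_x\hat{\Delta}_jW_h}{\bb L^2}^p\cdot\mathbb E\norm{\partial_x\bff m_h^j}{\bb L^2}^p\le C_p\Gamma^pk^{p/2}\,\mathbb E\norm{\partial_x\bff m_h^j}{\bb L^2}^p$. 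Collecting factors,
\[
\mathbb E\Bigl[\max_r|M_r|^p\Bigr]\le C\,J^{p/2-1}k^{p/2}\sum_j\mathbb E\norm{\partial_x\bff m_h^j}{\bb L^2}^p
=C\,T^{p/2-1}\sum_j k\,\mathbb E\norm{\partial_x\bff m_h^j}{\bb L^2}^p .
\]
Finally, Young's inequality $a^p\le 1+a^{2p}$ gives the right-hand side of \eqref{eq:BDG-stochastic-energy-input-final}, with a constant depending only on $p$, $T$, $\kappa$ and $\Gamma$.

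The main point to handle carefully is the conditioning step. It requires that $\bff m_h^j$ be $\mathcal F_{t_j}$-measurable, which is the adaptedness assumed in the statement. It also requires that the series in $\ell$ defining $\hat{\Delta}_jW_h$ converge in $\bb W^{1,\infty}$ in $L^{2q}(\Omega)$, so that the martingale property and the moment bound can be applied termwise; this is guaranteed by the Minkowski argument behind \eqref{eq:noise-increment-moment-sllg}.
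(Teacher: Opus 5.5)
Your proof is correct and has the same structure as the paper's: martingale differences, discrete BDG, independence of $\hat{\Delta}_jW_h$ from $\mathcal F_{t_j}$, and H\"older/Jensen in the time index. The one real difference is the bound on each increment.

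The paper uses the generic product estimate \eqref{eq:Ih-product-H1-sllg}. This gives $|\xi_j|\le C\norm{\hat{\Delta}_jW_h}{\bb W^{1,\infty}}\bigl(1+\norm{\partial_x\bff m_h^j}{\bb L^2}^2\bigr)$, which produces exactly the $2p$-th moments in \eqref{eq:BDG-stochastic-energy-input-final}. You instead use the elementwise orthogonality behind \eqref{eq:discrete-cross-cancellation-sllg} to get a bound linear in $\norm{\partial_x\bff m_h^j}{\bb L^2}$. So you actually prove the sharper estimate with $\sum_j k\,\mathbb E\bigl[\norm{\partial_x\bff m_h^j}{\bb L^2}^{p}\bigr]$ and only then weaken it. That sharpness buys nothing later, because the other noise terms in the proof of Proposition~\ref{prop:p-moment-stability-sllg} generate $2p$-th moments anyway. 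The paper's cruder route simply avoids needing the cancellation structure.

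Your square-function form of BDG followed by H\"older over $J=T/k$ terms is equivalent here to Lemma~\ref{lem:discrete-BDG} followed by Lemma~\ref{lem:weighted-discrete-holder}; both give the factor $T^{p/2-1}k$.

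One harmless slip: the nodal constraint gives the pathwise bound $\norm{\partial_x\bff m_h^j}{\bb L^2}\le Ch^{-1}$, since nodal differences are at most $2$, not $Ch^{-1/2}$. Any $h$-dependent bound suffices for integrability, so nothing changes.
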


\begin{proof}
Set $\mathfrak Z_{j+1}:=
    \kappa
    \inpro{
      \partial_xI_h(\bff m_h^j\times\hat{\Delta}_jW_h)
    }{\partial_x\bff m_h^j}$.
The sequence $\{\mathfrak Z_{j+1}\}$ is a martingale difference sequence.
By \eqref{eq:Ih-product-H1-sllg},
\eqref{eq:noise-increment-moment-sllg}, and independence of the Brownian
increment from $\mathcal F_{t_j}$, we have
\[
    \mathbb E \bigl[|\mathfrak Z_{j+1}|^p \bigr]
    \le
    C_p k^{\frac{p}{2}}
    \mathbb E\left[
      \left(
        1+\norm{\partial_x\bff m_h^j}{\bb L^2}^2
      \right)^p
    \right].
\]
The discrete BDG inequality followed by
Lemma~\ref{lem:weighted-discrete-holder} proves the result.
\end{proof}

We also have the following $p$-th moment bound.

\begin{proposition}[$p$-th moment bound]\label{prop:p-moment-stability-sllg}
Let $p\ge2$, let $\alpha,\kappa>0$, and suppose
Assumption~\ref{ass:noise-regularity-sllg} holds. Let
$\{\bff m_h^j\}_{j=0}^J$ be an adapted solution of
Algorithm~\ref{alg:midpoint-sllg}.
Then there exists a constant $C_{p,T,\alpha,\kappa,\Gamma}>0$, independent of $h$ and $k$, such that
\begin{align}
    &\mathbb E\left[
      \max_{0\le j\le J}
      \norm{\partial_x\bff m_h^j}{\bb L^2}^{2p}
    \right]
    +
    \mathbb E\left[
      \left(
        \sum_{j=0}^{J-1}
        \norm{\partial_x(\bff m_h^{j+1}-\bff m_h^j)}{\bb L^2}^2
      \right)^p
    \right]
    \nonumber\\
    &\quad+
    \mathbb E\left[
      \left(
        \sum_{j=0}^{J-1}k
        \norm{
          \bff m_h^{j+\frac12}\times\Delta_h\bff m_h^{j+1}
        }{h}^2
      \right)^p
    \right]
    \le
    C_{p,T,\alpha,\kappa,\Gamma}
    \left(
      1+
      \mathbb E\left[
        \norm{\partial_x\bff m_h^0}{\bb L^2}^{2p}
      \right]
    \right).
    \label{eq:p-moment-stability-sllg}
\end{align}
\end{proposition}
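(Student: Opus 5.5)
The argument is a discrete energy identity obtained by testing the scheme with $\bff\phi_h=-\Delta_h\bff m_h^{j+1}$, followed by a BDG argument. First, by \eqref{eq:def-discrete-laplacian-sllg} the time-difference term gives
\[
-\inpro{\bff m_h^{j+1}-\bff m_h^j}{\Delta_h\bff m_h^{j+1}}_h
=\tfrac12\norm{\partial_x\bff m_h^{j+1}}{\bb L^2}^2-\tfrac12\norm{\partial_x\bff m_h^{j}}{\bb L^2}^2+\tfrac12\norm{\partial_x(\bff m_h^{j+1}-\bff m_h^j)}{\bb L^2}^2 .
\]
This produces the second quantity in \eqref{eq:p-moment-stability-sllg}. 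The precession term vanishes pointwise at the nodes, since $(\bff a\times\bff b)\cdot\bff b=0$. The damping term equals $\alpha\kappa k\norm{\bff m_h^{j+\frac12}\times\Delta_h\bff m_h^{j+1}}{h}^2$ by the identity $(\bff a\times(\bff a\times\bff b))\cdot\bff b=-|\bff a\times\bff b|^2$ applied nodewise. This is the third quantity.

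The stochastic term is $-\inpro{\bff m_h^{j+\frac12}\times\hat\Delta_jW_h}{\Delta_h\bff m_h^{j+1}}_h$, and it is split into three pieces. I would write $\bff m_h^{j+\frac12}=\bff m_h^j+\frac12(\bff m_h^{j+1}-\bff m_h^j)$ and $\Delta_h\bff m_h^{j+1}=\Delta_h\bff m_h^j+\Delta_h(\bff m_h^{j+1}-\bff m_h^j)$.
\begin{enumerate}[(a)]
\item The adapted part $-\inpro{\bff m_h^j\times\hat\Delta_jW_h}{\Delta_h\bff m_h^j}_h=\inpro{\partial_xI_h(\bff m_h^j\times\hat\Delta_jW_h)}{\partial_x\bff m_h^j}$ is the martingale increment handled by Lemma~\ref{lem:BDG-stochastic-energy-input}.
\item The part with $\Delta_h(\bff m_h^{j+1}-\bff m_h^j)$ and $\bff m_h^j$ becomes $\inpro{\partial_xI_h(\bff m_h^j\times\hat\Delta_jW_h)}{\partial_x(\bff m_h^{j+1}-\bff m_h^j)}$. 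By \eqref{eq:Ih-product-H1-sllg} and Young's inequality, it is absorbed into $\frac14\norm{\partial_x(\bff m_h^{j+1}-\bff m_h^j)}{\bb L^2}^2$, at the cost of $C\norm{\hat\Delta_jW_h}{\bb W^{1,\infty}}^2(1+\norm{\partial_x\bff m_h^j}{\bb L^2}^2)$.
\item The quadratic part $\frac12\inpro{(\bff m_h^{j+1}-\bff m_h^j)\times\hat\Delta_jW_h}{-\Delta_h\bff m_h^{j+1}}_h$ is the delicate one.
\end{enumerate}

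The main obstacle is term (c), because it is not a martingale and $\Delta_h\bff m_h^{j+1}$ is not controlled directly. I would not expand $\bff m_h^{j+1}-\bff m_h^j$ via the scheme. Instead, I would use nodal geometry: the tangential part of $\Delta_h\bff m_h^{j+1}$ with respect to $\bff m_h^{j+\frac12}$ is controlled by $\bff m_h^{j+\frac12}\times\Delta_h\bff m_h^{j+1}$, and $\bff m_h^{j+1}-\bff m_h^j$ is orthogonal to $\bff m_h^{j+\frac12}$ at the nodes.

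Concretely, write nodewise $\bff d:=\bff m_h^{j+1}-\bff m_h^j$ and $\bff a:=\bff m_h^{j+\frac12}$, with $\bff d\perp\bff a$. Then $(\bff d\times\bff w)\cdot\bff b$ can be rewritten through $\bff a\times\bff b$ using $|\bff a|^2\bff b=(\bff a\cdot\bff b)\bff a-\bff a\times(\bff a\times\bff b)$, together with $|\bff a|\ge c$ (guaranteed once $|\bff d|\le 1$, e.g.\ when $|\bff a|^2=1-|\bff d|^2/4$ is bounded below). The normal component contributes $(\bff d\times\bff w)\cdot\bff a\,(\bff a\cdot\bff b)/|\bff a|^2$. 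This is awkward, since $|\bff a|$ may degenerate when $|\bff d|\approx2$.

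As a fallback, I would instead use $\bff d=k(\ldots)+\bff a\times\hat\Delta_jW_h$ from the scheme, which is valid nodewise because the lumped product is diagonal. Then
\[
|\bff d|\le Ck\bigl(|\bff a\times\Delta_h\bff m_h^{j+1}|\bigr)+|\hat\Delta_jW_h| ,
\]
and (c) is bounded by $Ck\norm{\hat\Delta_jW_h}{\bb L^\infty}\norm{\bff a\times\Delta_h\bff m_h^{j+1}}{h}\norm{\Delta_h\bff m_h^{j+1}}{h}$ plus a term $\inpro{(\bff a\times\hat\Delta W_h)\times\hat\Delta W_h}{-\Delta_h\bff m_h^{j+1}}_h$. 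The latter is of order $k$; after summation by parts via \eqref{eq:discrete-cross-cancellation-sllg}-type estimates, it is bounded by $C\norm{\hat\Delta_jW_h}{\bb W^{1,\infty}}^2(1+\norm{\partial_x\bff m_h^{j+1}}{\bb L^2}^2)$. The remaining pieces are absorbed using the damping term together with the inverse estimate $\norm{\Delta_h\bff v_h}{h}\le Ch^{-1}\norm{\partial_x\bff v_h}{\bb L^2}$ only where a factor $k^{1/2}$ is available. I expect this absorption to be the crux, and I would likely need a small-noise/stopping argument or an identity converting $\bff a\times\Delta_h$ into the full Laplacian. The factor $\norm{\partial_x\bff m_h^{j+1}}{\bb L^2}^2$ multiplied by $\norm{\hat\Delta_jW_h}{\bb W^{1,\infty}}^2$ is handled by moving it to the left-hand side on the event where the increment is small, or via a discrete Gronwall argument with random coefficients.

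Finally, I would sum over $j$, raise to the $p$-th power, take the maximum over $r$ and expectations, and bound terms of the form $\sum_j\norm{\hat\Delta_jW_h}{\bb W^{1,\infty}}^2X_j$ by Hölder's inequality and \eqref{eq:noise-increment-moment-sllg}, which gives $\mathbb E[\cdot]\le Ck\sum_j\mathbb E X_j^p$ through Lemma~\ref{lem:weighted-discrete-holder}. The martingale term is bounded by Lemma~\ref{lem:BDG-stochastic-energy-input}. A discrete Gronwall inequality in $\mathbb E\max_j\norm{\partial_x\bff m_h^j}{\bb L^2}^{2p}$ then yields \eqref{eq:p-moment-stability-sllg}.
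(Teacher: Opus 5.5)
Your energy identity, the damping and precession terms, and pieces (a) and (b) match the paper's argument. There is, however, a genuine gap at piece (c), which you correctly flag as the crux but leave open.

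Your fallback substitutes the scheme nodewise for $\bff d:=\bff m_h^{j+1}-\bff m_h^j$ inside $\frac12\inpro{\bff d\times\hat\Delta_jW_h}{-\Delta_h\bff m_h^{j+1}}_h$. This produces $Ck\norm{\hat\Delta_jW_h}{\bb L^\infty}\norm{\bff m_h^{j+\frac12}\times\Delta_h\bff m_h^{j+1}}{h}\norm{\Delta_h\bff m_h^{j+1}}{h}$. The full discrete Laplacian $\norm{\Delta_h\bff m_h^{j+1}}{h}$ is not controlled by anything in \eqref{eq:p-moment-stability-sllg}; only its component orthogonal to $\bff m_h^{j+\frac12}$ is. Removing it with the inverse estimate costs $h^{-1}$, which gives either an $h$-dependent constant or a coupling of the type $k\lesssim h^2$. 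Either way, the claimed independence of $h$ and $k$ is lost.

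Your other piece, $C\norm{\hat\Delta_jW_h}{\bb W^{1,\infty}}^2(1+\norm{\partial_x\bff m_h^{j+1}}{\bb L^2}^2)$, does not fit your closing step either. Its random coefficient is not pathwise small, so it cannot be absorbed. Also, $\bff m_h^{j+1}$ is not $\mathcal F_{t_j}$-measurable, so the independence argument with \eqref{eq:noise-increment-moment-sllg} does not apply. The ``small-noise/stopping'' remedy is never specified.

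The missing idea is to integrate by parts first and split the test function, rather than substitute for the increment. By \eqref{eq:def-discrete-laplacian-sllg},
\[
\tfrac12\inpro{\bff d\times\hat\Delta_jW_h}{-\Delta_h\bff m_h^{j+1}}_h
=\tfrac12\inpro{\partial_xI_h(\bff d\times\hat\Delta_jW_h)}{\partial_x\bff m_h^j}
+\tfrac12\inpro{\partial_xI_h(\bff d\times\hat\Delta_jW_h)}{\partial_x\bff d}.
\]

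\textbf{First term.} The refined product estimate \eqref{eq:Ih-product-H1-L2refined-sllg} puts only $\norm{\partial_x\bff d}{\bb L^2}$ and $\norm{\bff d}{h}$ on the increment. Only at this point should you use your own nodal bound on $\bff d$, i.e.\ \eqref{eq:preliminary-L2-increment-sllg}: $\norm{\bff d}{h}\le Ck\norm{\bff m_h^{j+\frac12}\times\Delta_h\bff m_h^{j+1}}{h}+C\norm{\hat\Delta_jW_h}{\bb L^\infty}$. Now only the tangential quantity appears, and it is absorbed into part of the damping term. What remains is $C\norm{\hat\Delta_jW_h}{\bb W^{1,\infty}}^2(1+\norm{\partial_x\bff m_h^j}{\bb L^2}^2)$.

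\textbf{Second term.} The naive bound $C\norm{\hat\Delta_jW_h}{\bb L^\infty}\norm{\partial_x\bff d}{\bb L^2}^2$ cannot be absorbed pathwise. Instead, use the one-dimensional cancellation \eqref{eq:discrete-cross-cancellation-sllg} with $\norm{\bff d}{\bb L^\infty}\le 2$ from the nodal constraint. This gives $C\norm{\partial_x\hat\Delta_jW_h}{\bb L^2}\norm{\partial_x\bff d}{\bb L^2}$, which is linear in $\norm{\partial_x\bff d}{\bb L^2}$. It is therefore absorbed, with a deterministic constant, into the numerical dissipation $\frac12\norm{\partial_x\bff d}{\bb L^2}^2$ of your energy identity.

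After this, every remainder has the form $\norm{\hat\Delta_jW_h}{\bb W^{1,\infty}}^2(1+\norm{\partial_x\bff m_h^j}{\bb L^2}^2)$ with an adapted second factor. Your planned BDG, discrete Jensen and Gronwall step then goes through, with no stopping argument and no mesh condition.
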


\begin{proof}
	Since the mass-lumped scheme \eqref{eq:fully-discrete-midpoint-sllg-laplacian} holds nodewise and
	$\norm{\bff m_h^{j+\frac12}}{\bb L^\infty}\le 1$, we have
	\begin{align}
		\norm{\bff m_h^{j+1}-\bff m_h^j}{h}
		&\le
		Ck
		\norm{
			\bff m_h^{j+\frac12}
			\times\Delta_h\bff m_h^{j+1}
		}{h}
		+
		C
		\norm{\hat{\Delta}_jW_h}{\bb L^\infty}.
		\label{eq:preliminary-L2-increment-sllg}
	\end{align}
	
	Testing
	\eqref{eq:fully-discrete-midpoint-sllg-laplacian} with
	$-\kappa\Delta_h\bff m_h^{j+1}$ gives
	\begin{align}
		&\frac{\kappa}{2}
		\left(
		\norm{\partial_x\bff m_h^{j+1}}{\bb L^2}^2
		-
		\norm{\partial_x\bff m_h^j}{\bb L^2}^2
		+
		\norm{
			\partial_x(\bff m_h^{j+1}-\bff m_h^j)
		}{\bb L^2}^2
		\right)
		\nonumber\\
		&\quad+
		\alpha\kappa^2k
		\norm{
			\bff m_h^{j+\frac12}
			\times\Delta_h\bff m_h^{j+1}
		}{h}^2
		=
		\kappa
		\inpro{
			\partial_xI_h
			\left(
			\bff m_h^{j+\frac12}
			\times\hat{\Delta}_jW_h
			\right)
		}{
			\partial_x\bff m_h^{j+1}
		}.
		\label{eq:p-moment-proof-energy-identity-sllg}
	\end{align}
	By using the identity
	\[
	\bff m_h^{j+\frac12}
	=
	\bff m_h^j
	+
	\frac12(\bff m_h^{j+1}-\bff m_h^j),
	\]
	after adding and subtracting a common term, the right-hand side of
	\eqref{eq:p-moment-proof-energy-identity-sllg} becomes
	\begin{align}
		&
		\kappa
		\inpro{
			\partial_xI_h(
			\bff m_h^j\times\hat{\Delta}_jW_h
			)
		}{
			\partial_x\bff m_h^j
		}
		+
		\kappa
		\inpro{
			\partial_xI_h(
			\bff m_h^j\times\hat{\Delta}_jW_h
			)
		}{
			\partial_x(\bff m_h^{j+1}-\bff m_h^j)
		}
		\nonumber\\
		&\quad+
		\frac{\kappa}{2}
		\inpro{
			\partial_xI_h
			\left(
			(\bff m_h^{j+1}-\bff m_h^j)
			\times\hat{\Delta}_jW_h
			\right)
		}{
			\partial_x\bff m_h^j
		}
		\nonumber\\
		&\quad+
		\frac{\kappa}{2}
		\inpro{
			\partial_xI_h
			\left(
			(\bff m_h^{j+1}-\bff m_h^j)
			\times\hat{\Delta}_jW_h
			\right)
		}{
			\partial_x(\bff m_h^{j+1}-\bff m_h^j)
		} =: \mathrm{I}_1+\ldots+\mathrm{I}_4.
		\label{eq:p-moment-proof-noise-splitting-sllg}
	\end{align}
	We now estimate each term above, except for $\mathrm{I}_1$. The second term satisfies
	\begin{align*}
		\left|
		\mathrm{I}_2
		\right|
		\le
		\frac{\kappa}{12}
		\norm{
			\partial_x(\bff m_h^{j+1}-\bff m_h^j)
		}{\bb L^2}^2
		+
		C
		\norm{\hat{\Delta}_jW_h}{\bb W^{1,\infty}}^2
		\Bigl(
		1+
		\norm{\partial_x\bff m_h^j}{\bb L^2}^2
		\Bigr).
	\end{align*}
	For the term $\mathrm{I}_3$,
	\eqref{eq:Ih-product-H1-L2refined-sllg} and
	\eqref{eq:preliminary-L2-increment-sllg} give
	\begin{align*}
		\left|
		\mathrm{I}_3
		\right|
		&\le
		\frac{\kappa}{12}
		\norm{
			\partial_x(\bff m_h^{j+1}-\bff m_h^j)
		}{\bb L^2}^2
		+
		\frac{\alpha\kappa^2}{4}k
		\norm{
			\bff m_h^{j+\frac12}
			\times\Delta_h\bff m_h^{j+1}
		}{h}^2
		+
		C
		\norm{\hat{\Delta}_jW_h}{\bb W^{1,\infty}}^2
		\Bigl(
		1+\norm{\partial_x\bff m_h^j}{\bb L^2}^2
		\Bigr).
	\end{align*}
	Finally, using
	\eqref{eq:discrete-cross-cancellation-sllg} and an elementary inequality
	$\norm{\bff m_h^{j+1}-\bff m_h^j}{\bb L^\infty} \le 2$,
	we obtain
	\begin{align*}
		\left|\mathrm{I}_4 \right|
		&\le
		\frac{\kappa}{12}
		\norm{
			\partial_x(\bff m_h^{j+1}-\bff m_h^j)
		}{\bb L^2}^2
		+
		C\norm{\hat{\Delta}_jW_h}{\bb W^{1,\infty}}^2.
	\end{align*}
	
	Combining these estimates and absorbing the corresponding terms into
	the right-hand side of \eqref{eq:p-moment-proof-energy-identity-sllg}, noting \eqref{eq:p-moment-proof-noise-splitting-sllg}, yields
	\begin{align*}
		&\frac{\kappa}{2}
		\left(
		\norm{\partial_x\bff m_h^{j+1}}{\bb L^2}^2
		-
		\norm{\partial_x\bff m_h^j}{\bb L^2}^2
		\right)
		+
		\frac{\kappa}{4}
		\norm{
			\partial_x(\bff m_h^{j+1}-\bff m_h^j)
		}{\bb L^2}^2
		+
		\frac{\alpha\kappa^2}{2}k
		\norm{
			\bff m_h^{j+\frac12}
			\times\Delta_h\bff m_h^{j+1}
		}{h}^2
		\nonumber\\
		&\qquad
		\le
		\kappa
		\inpro{
			\partial_xI_h(
			\bff m_h^j\times\hat{\Delta}_jW_h
			)
		}{
			\partial_x\bff m_h^j
		}
		+
		C
		\norm{\hat{\Delta}_jW_h}{\bb W^{1,\infty}}^2
		\left(
		1+
		\norm{\partial_x\bff m_h^j}{\bb L^2}^2
		\right).
	\end{align*}
	
	Summing up to an arbitrary time level, taking the maximum over the
	partial sums, and then taking the $p$-th power gives
	\begin{align*}
		&\max_{0\le r\le J}
		\norm{\partial_x\bff m_h^r}{\bb L^2}^{2p}
		+
		\left(
		\sum_{j=0}^{J-1}
		\norm{
			\partial_x(\bff m_h^{j+1}-\bff m_h^j)
		}{\bb L^2}^2
		\right)^p
		+
		\left(
		k\sum_{j=0}^{J-1}
		\norm{
			\bff m_h^{j+\frac12}
			\times\Delta_h\bff m_h^{j+1}
		}{h}^2
		\right)^p
		\nonumber\\
		&\qquad
		\le
		C_p
		\norm{\partial_x\bff m_h^0}{\bb L^2}^{2p}
		+
		C_p
		\max_{0\le r\le J}
		\left|
		\sum_{j=0}^{r-1}
		\kappa
		\inpro{
			\partial_xI_h(
			\bff m_h^j\times\hat{\Delta}_jW_h
			)
		}{
			\partial_x\bff m_h^j
		}
		\right|^p
		\nonumber\\
		&\qquad\quad+
		C_p
		\left(
		\sum_{j=0}^{J-1}
		\norm{\hat{\Delta}_jW_h}{\bb W^{1,\infty}}^2
		\left(
		1+
		\norm{\partial_x\bff m_h^j}{\bb L^2}^2
		\right)
		\right)^p.
	\end{align*}
	
	After taking expectations,
	Lemma~\ref{lem:BDG-stochastic-energy-input} controls the martingale
	term. Moreover, Lemma~\ref{lem:weighted-discrete-holder},
	the independence of $\hat{\Delta}_jW_h$ from
	$\mathcal F_{t_j}$, and \eqref{eq:noise-increment-moment-sllg} imply
	\begin{align*}
		&\mathbb E
		\left[
		\left(
		\sum_{j=0}^{J-1}
		\norm{\hat{\Delta}_jW_h}{\bb W^{1,\infty}}^2
		\left(
		1+
		\norm{\partial_x\bff m_h^j}{\bb L^2}^2
		\right)
		\right)^p
		\right]
		\le
		C_{p,T,\Gamma}
		\left(
		1+
		\sum_{j=0}^{J-1}
		k\,
		\mathbb E
		\left[
		\norm{\partial_x\bff m_h^j}{\bb L^2}^{2p}
		\right]
		\right).
	\end{align*}
	An application of the discrete Gronwall lemma then proves~\eqref{eq:p-moment-stability-sllg}.
\end{proof}

The following discrete increment estimates will be essential for our analysis.

\begin{lemma} \label{lem:mh-increment-estimates-sllg}
	Assume \eqref{eq:noise-W1inf-l1-assumption-sllg} holds. Then there exists a constant $C>0$, independent of $h$ and $k$, such that
	\begin{equation}\label{eq:mh-increment-second-moment-sllg}
		\mathbb E
		\left[
		\sum_{j=0}^{J-1}
		\norm{\bff m_h^{j+1}-\bff m_h^j}{h}^2
		\right]
		+
		\frac1k \mathbb E
		\left[
		\sum_{j=0}^{J-1}
		\norm{\bff m_h^{j+1}-\bff m_h^j}{h}^4
		\right]
		\le C.
	\end{equation}
	Consequently,
	\begin{equation}\label{eq:mh-increment-fourth-root-sllg}
		\sum_{j=0}^{J-1}
		\left(
		\mathbb E
		\left[
		\norm{\bff m_h^{j+1}-\bff m_h^j}{h}^4
		\right]
		\right)^{\frac12}
		\le C .
	\end{equation}
\end{lemma}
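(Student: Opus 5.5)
The plan is to start from the nodewise increment bound \eqref{eq:preliminary-L2-increment-sllg}, which follows from the mass-lumped scheme and $\norm{\bff m_h^{j+\frac12}}{\bb L^\infty}\le1$ (Lemma~\ref{lem:nodal-constraint-sllg}):
\[
\norm{\bff m_h^{j+1}-\bff m_h^j}{h}
\le
Ck\norm{\bff m_h^{j+\frac12}\times\Delta_h\bff m_h^{j+1}}{h}
+C\norm{\hat{\Delta}_jW_h}{\bb L^\infty}.
\]
Squaring and summing over $j$ gives
\[
\sum_j\norm{\bff m_h^{j+1}-\bff m_h^j}{h}^2
\le
Ck\Bigl(k\sum_j\norm{\bff m_h^{j+\frac12}\times\Delta_h\bff m_h^{j+1}}{h}^2\Bigr)
+C\sum_j\norm{\hat{\Delta}_jW_h}{\bb L^\infty}^2 .
\]
The first term has bounded expectation by Proposition~\ref{prop:p-moment-stability-sllg} with $p=1$ (or $p=2$ via Jensen). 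Here we use $k\le1$ and that $\norm{\partial_x\bff m_h^0}{\bb L^2}=\norm{\partial_x I_h\bff m_0}{\bb L^2}\le C\norm{\bff m_0}{\bb H^1}$ by \eqref{eq:Ih-stability}. The second term has expectation at most $CJ\Gamma^2k=CT\Gamma^2$ by \eqref{eq:noise-increment-moment-sllg}.

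For the quartic term, I raise the increment bound to the fourth power and divide by $k$. This gives
\[
\frac1k\sum_j\norm{\bff m_h^{j+1}-\bff m_h^j}{h}^4
\le
Ck^3\sum_j\norm{\bff m_h^{j+\frac12}\times\Delta_h\bff m_h^{j+1}}{h}^4
+\frac Ck\sum_j\norm{\hat{\Delta}_jW_h}{\bb L^\infty}^4 .
\]
The noise part has expectation at most $Ck^{-1}Jk^2=CT$. For the drift part, I write $a_j:=k\norm{\bff m_h^{j+\frac12}\times\Delta_h\bff m_h^{j+1}}{h}^2$, so the term equals $Ck\sum_j a_j^2\le Ck\bigl(\sum_j a_j\bigr)^2$. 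Its expectation is then bounded by the $p=2$ case of Proposition~\ref{prop:p-moment-stability-sllg}, again using $k\le1$.

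This step is the main obstacle, and the point to check carefully. The crude bound $\sum a_j^2\le(\sum a_j)^2$ is what makes it work: it lets the $\ell^2$-type sum be controlled by the square of the dissipation sum, which the $p=2$ moment bound controls. No pointwise-in-$j$ estimate of the discrete Laplacian is needed, which is exactly what is unavailable.

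Finally, \eqref{eq:mh-increment-fourth-root-sllg} follows from Cauchy--Schwarz in $j$:
\[
\sum_j\bigl(\mathbb E\norm{\bff m_h^{j+1}-\bff m_h^j}{h}^4\bigr)^{1/2}
\le J^{1/2}\Bigl(\sum_j\mathbb E\norm{\bff m_h^{j+1}-\bff m_h^j}{h}^4\Bigr)^{1/2}
=\sqrt{T}\Bigl(\frac1k\,\mathbb E\sum_j\norm{\bff m_h^{j+1}-\bff m_h^j}{h}^4\Bigr)^{1/2}\le C,
\]
where the last inequality is the quartic part of \eqref{eq:mh-increment-second-moment-sllg}.
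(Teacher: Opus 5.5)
Your proposal is correct and follows the paper's proof step for step. It uses the nodewise bound \eqref{eq:preliminary-L2-increment-sllg}, the $p=2$ moment bound of Proposition~\ref{prop:p-moment-stability-sllg} together with $\sum_j a_j^2\le(\sum_j a_j)^2$ for the quartic drift term, \eqref{eq:noise-increment-moment-sllg} for the noise, and Cauchy--Schwarz in the time index. Since Proposition~\ref{prop:p-moment-stability-sllg} is stated only for $p\ge2$, the quadratic term should be handled by your second option ($p=2$ plus Jensen), which is exactly what the paper does.
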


\begin{proof}
	Our starting point is the estimate~\eqref{eq:preliminary-L2-increment-sllg}.
	Taking expectations, using \eqref{eq:p-moment-stability-sllg} with
	$p=2$ and Jensen's inequality for the first term, and using
	\eqref{eq:noise-increment-moment-sllg} with $q=1$ for the second term,
	we obtain
	\[
	\mathbb E
	\left[
	\sum_{j=0}^{J-1}
	\norm{\bff m_h^{j+1}-\bff m_h^j}{h}^2
	\right]
	\le
	Ck
	+
	C\sum_{j=0}^{J-1}k
	\le C,
	\]
	thus proving the estimate for the first term in \eqref{eq:mh-increment-second-moment-sllg}.
	
	Next, raising~\eqref{eq:preliminary-L2-increment-sllg} to the fourth
	power gives
	\begin{align}
		\norm{\bff m_h^{j+1}-\bff m_h^j}{h}^4
		&\le
		Ck^4
		\norm{
			\bff m_h^{j+\frac12}\times\Delta_h\bff m_h^{j+1}
		}{h}^4
		+
		C
		\norm{\hat{\Delta}_jW_h}{\bb L^\infty}^4 .
		\label{eq:mh-increment-fourth-pointwise-sllg}
	\end{align}
	Summing from $j=0$ to $J-1$, taking expectations, and using an elementary inequality
	\[
	k^2
	\sum_{j=0}^{J-1}
	\norm{
		\bff m_h^{j+\frac12}\times\Delta_h\bff m_h^{j+1}
	}{h}^4
	\le
	\left(
	k
	\sum_{j=0}^{J-1}
	\norm{
		\bff m_h^{j+\frac12}\times\Delta_h\bff m_h^{j+1}
	}{h}^2
	\right)^2,
	\]
	we obtain
	\begin{align}
		\mathbb E
		\left[
		\sum_{j=0}^{J-1}
		\norm{\bff m_h^{j+1}-\bff m_h^j}{h}^4
		\right]
		&\le
		Ck^2
		\mathbb E
		\left[
		\left(
		k
		\sum_{j=0}^{J-1}
		\norm{
			\bff m_h^{j+\frac12}\times\Delta_h\bff m_h^{j+1}
		}{h}^2
		\right)^2
		\right]
		+
		C
		\sum_{j=0}^{J-1}
		\mathbb E
		\left[
		\norm{\hat{\Delta}_jW_h}{\bb L^\infty}^4
		\right]
		\nonumber\\
		&\le
		Ck^2
		+
		C\sum_{j=0}^{J-1}k^2
		\le
		Ck,
		\label{eq:mh-increment-fourth-proof-sllg}
	\end{align}
	where we also used the moment bound \eqref{eq:p-moment-stability-sllg} with $p=2$ and~\eqref{eq:noise-increment-moment-sllg} with $q=2$.
	This proves \eqref{eq:mh-increment-second-moment-sllg}.
	
	Finally, by Cauchy's inequality in the time index,
	\begin{align*}
		\sum_{j=0}^{J-1}
		\left(
		\mathbb E
		\left[
		\norm{\bff m_h^{j+1}-\bff m_h^j}{h}^4
		\right]
		\right)^{\frac12}
		&\le
		J^{\frac12}
		\left(
		\sum_{j=0}^{J-1}
		\mathbb E
		\left[
		\norm{\bff m_h^{j+1}-\bff m_h^j}{h}^4
		\right]
		\right)^{\frac12}
		\le
		\left(\frac{T}{k}\right)^{\frac12}
		(Ck)^{\frac12}
		\le C .
	\end{align*}
	This proves \eqref{eq:mh-increment-fourth-root-sllg} and completes the proof of the lemma.
\end{proof}

\subsection{Localised error estimate}
\label{subsec:error-estimate-sllg}

In this subsection, we prove a localised \emph{a priori} error estimate in the natural energy norm.
For brevity, we write
\[
\bff m^j:=\bff m(t_j),
\qquad
\overline{\bff m}^{j+\frac12}
:=
\frac12(\bff m^{j+1}+\bff m^j),
\qquad
j=0,\ldots,J-1.
\]

For $j=0,\ldots,J$, define
\begin{equation}\label{eq:error-splitting-sllg}
	\bff\rho_h^j
	:=
	\bff m^j-I_h\bff m^j,
	\qquad
	\bff\theta_h^j
	:=
	I_h\bff m^j-\bff m_h^j,
\end{equation}
so that $\bff m^j-\bff m_h^j=\bff\rho_h^j+\bff\theta_h^j$.
By \eqref{eq:Ih-H1-approx-sllg}, the interpolation error $\bff{\rho}_h^j$ satisfies
\begin{align}
	&\max_{0\le j\le J}
	\norm{\bff\rho_h^j}{\bb L^2}^2
	+
	k\sum_{j=0}^{J-1}
	\norm{\partial_x\bff\rho_h^{j+1}}{\bb L^2}^2
	\le
	Ch^2 \norm{\bff m}{L^\infty(0,T;\bb H^2)}^2 .
	\label{eq:rho-interpolation-error-sllg}
\end{align}

To derive the error equation, we first compare the deterministic drift of
the exact solution with the corresponding discrete drift evaluated at the
projected exact solution. For $j=0,\ldots,J-1$, define the drift residual
$\mathcal R_h^{j+1}\in\bb V_h'$ by
\begin{equation}\label{eq:det-residual}
\mathcal R_h^{j+1}
:=
\mathcal R_{h,\mathrm d}^{j+1}
+
\mathcal R_{h,\mathrm p}^{j+1},
\end{equation}
where the damping residual $\mathcal R_{h,\mathrm d}^{j+1}$ measures the
difference between the discrete midpoint approximation of the damping
drift and its exact time integral:
\begin{align*}
	\mathcal R_{h,\mathrm d}^{j+1}(\bff\phi_h)
	&:=
	\alpha\kappa k
	\inpro{
		I_h\overline{\bff m}^{j+\frac12}
		\times
		\left[
		I_h\overline{\bff m}^{j+\frac12}
		\times
		\Delta_h I_h\bff m^{j+1}
		\right]
	}{
		\bff\phi_h
	}_h
	\nonumber
	\\
	&\quad
	-
	\alpha\kappa
	\int_{t_j}^{t_{j+1}}
	\inpro{
		I_h
		\left[
		\bff m(s)\times
		\left(
		\bff m(s)\times\partial_{xx}\bff m(s)
		\right)
		\right]
	}{
		\bff\phi_h
	}_h
	\ds,
\end{align*}
and the precession residual $\mathcal R_{h,\mathrm p}^{j+1}$ is defined
analogously by
\begin{align*}
	\mathcal R_{h,\mathrm p}^{j+1}(\bff\phi_h)
	&:=
	-\kappa k
	\inpro{
		I_h\overline{\bff m}^{j+\frac12}
		\times
		\Delta_h I_h\bff m^{j+1}
	}{
		\bff\phi_h
	}_h
	+
	\kappa
	\int_{t_j}^{t_{j+1}}
	\inpro{
		I_h
		\left[
		\bff m(s)\times\partial_{xx}\bff m(s)
		\right]
	}{
		\bff\phi_h
	}_h
	\ds .
\end{align*}

Note that since $I_h\bff m(s)$ and $\bff m(s)$, as well as
$\bff g_{\ell,h}$ and $\bff g_\ell$, agree at every finite element node,
the mass-lumped inner product gives
$\inpro{
	I_h\bff m(s)\times\bff g_{\ell,h}
}{
	\bff\phi_h
}_h
=
\inpro{
	\bff m(s)\times\bff g_\ell
}{
	\bff\phi_h
}_h$.
Hence, after nodal projection, the stochastic term may equivalently be
written using $W_h$.
Integrating the exact equation over $[t_j,t_{j+1}]$, applying the nodal
interpolant $I_h$, then adding and subtracting the two discrete drift
terms appearing above shows that the projected exact solution
$I_h\bff m^j$ satisfies
\begin{align}
	&\inpro{
		I_h\bff m^{j+1}-I_h\bff m^j
	}{
		\bff\phi_h
	}_h
	+
	\alpha\kappa k
	\inpro{
		I_h\overline{\bff m}^{j+\frac12}
		\times
		\left[
		I_h\overline{\bff m}^{j+\frac12}
		\times
		\Delta_h I_h\bff m^{j+1}
		\right]
	}{
		\bff\phi_h
	}_h
	\nonumber
	\\
	&\quad
	-
	\kappa k
	\inpro{
		I_h\overline{\bff m}^{j+\frac12}
		\times
		\Delta_h I_h\bff m^{j+1}
	}{
		\bff\phi_h
	}_h
	-
	\inpro{
		\int_{t_j}^{t_{j+1}}
		I_h\bff m(s)\times\circ\,\dW_h(s)
	}{
		\bff\phi_h
	}_h
	=
	\mathcal R_h^{j+1}(\bff\phi_h).
	\label{eq:projected-exact-discrete-equation-sllg}
\end{align}
Therefore, \eqref{eq:projected-exact-discrete-equation-sllg} has the same
deterministic structure as the scheme~\eqref{eq:midpoint-def-sllg}, up to the drift residual~$\mathcal R_h^{j+1}$. Subtracting the numerical scheme thus yields the discrete error equation directly.
The stochastic term is kept in its exact integral form, since its midpoint
consistency error requires a more delicate argument. Here, $\mathcal R_h^{j+1}$ contains only the drift
consistency error, while the stochastic part is treated separately in
Lemma~\ref{lem:stochastic-error-term-sllg}.

The following lemma estimates this drift residual in the dual norm
\eqref{eq:dual-Hminus1h-def-sllg}. This is the estimate needed when
$\mathcal R_h^{j+1}$ is paired with the error $\bff\theta_h^{j+1}$ in the
localised energy argument later.

\begin{lemma}[Drift consistency]\label{lem:consistency-sllg}
	Let $\gamma\in(0,\frac12)$, and let $\mathcal R_h^{j+1}$ be the drift
	residual defined by \eqref{eq:det-residual}. Then
	\begin{equation}\label{eq:consistency-bound-sllg}
		\mathbb E\left[
		\sum_{j=0}^{J-1}
		\frac1k\norm{\mathcal R_h^{j+1}}{-1,h}^2
		\right]
		\le
		C_\gamma\left(h^2+k^{2\gamma}\right).
	\end{equation}
\end{lemma}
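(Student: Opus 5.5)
We split each residual into a time-consistency part and a space-consistency part, and we handle the precession residual $\mathcal R_{h,\mathrm p}^{j+1}$ in detail. The damping residual $\mathcal R_{h,\mathrm d}^{j+1}$ is treated the same way, with the double cross product; the only extra ingredients are \eqref{eq:product-interpolation-double-sllg} and the bound $\|\bff m\|_{\bb L^\infty}=1$.

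\emph{Step 1: rewrite the discrete Laplacian.} For $\bff\phi_h\in\bb V_h$, the mass-lumped inner product only sees nodal values, and $I_h\overline{\bff m}^{j+\frac12}$ agrees with $\overline{\bff m}^{j+\frac12}$ at the nodes. Hence
\[
\inpro{I_h\overline{\bff m}^{j+\frac12}\times\Delta_hI_h\bff m^{j+1}}{\bff\phi_h}_h
=-\inpro{\Delta_hI_h\bff m^{j+1}}{I_h(\bff\phi_h\times I_h\overline{\bff m}^{j+\frac12})}_h
=\inpro{\partial_xI_h\bff m^{j+1}}{\partial_xI_h(\bff\phi_h\times I_h\overline{\bff m}^{j+\frac12})}.
\]
The continuous term is handled in the same way. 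We write $\bff m\times\partial_{xx}\bff m=\partial_x(\bff m\times\partial_x\bff m)$ and use \eqref{eq:smooth-lumped-quadrature-sllg} with $\bff f=\bff m\times\partial_{xx}\bff m\in\bb H^1$, which is valid because $\bff m\in\bb H^3$. This replaces $\inpro{\cdot}{\cdot}_h$ by $\inpro{\cdot}{\cdot}$ at a cost $Ch\|\bff m\|_{\bb H^3}^2\|\bff\phi_h\|_{1,h}$. Integrating by parts with the Neumann condition, the term becomes $\inpro{\partial_x\bff m}{\partial_x(\bff\phi_h\times\bff m)}$.

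\emph{Step 2: spatial part.} We compare
\[
\inpro{\partial_xI_h\bff m^{j+1}}{\partial_xI_h(\bff\phi_h\times I_h\overline{\bff m}^{j+\frac12})}
\quad\text{with}\quad
\inpro{\partial_x\bff m^{j+1}}{\partial_x(\bff\phi_h\times\overline{\bff m}^{j+\frac12})}.
\]
The difference is controlled by \eqref{eq:Ih-H1-approx-sllg}, \eqref{eq:Ih-stability}, \eqref{eq:product-interpolation-single-sllg} and the bound $\|\partial_x(\bff\phi_h\times\bff a)\|_{\bb L^2}\le C\|\bff a\|_{\bb W^{1,\infty}}\|\bff\phi_h\|_{1,h}$. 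This gives $Ch(1+\|\bff m\|_{L^\infty(0,T;\bb H^2)})^2\|\bff\phi_h\|_{1,h}$, so the spatial part of $\|\mathcal R_{h,\mathrm p}^{j+1}\|_{-1,h}$ is bounded by $Ckh\,P$, with $P$ a polynomial in $\|\bff m\|_{L^\infty(0,T;\bb H^3)}$.

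\emph{Step 3: temporal part.} It remains to bound
\[
\int_{t_j}^{t_{j+1}}\Bigl[\inpro{\partial_x\bff m^{j+1}}{\partial_x(\bff\phi_h\times\overline{\bff m}^{j+\frac12})}-\inpro{\partial_x\bff m(s)}{\partial_x(\bff\phi_h\times\bff m(s))}\Bigr]\ds .
\]
Add and subtract the mixed terms. Each difference then contains a factor $\bff m(t)-\bff m(s)$ with $|t-s|\le k$. Using the Hölder regularity in \eqref{eq:exact-solution-regularity-sllg}, each such factor is at most $k^{\gamma}\|\bff m\|_{C^\gamma([0,T];\bb H^2)}$ in $\bb H^2\hookrightarrow\bb W^{1,\infty}$, and the other factor is bounded in $L^\infty(0,T;\bb H^2)$. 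Hence the integrand is bounded by $Ck^\gamma\|\bff m\|_{C^\gamma\bb H^2}(1+\|\bff m\|_{L^\infty\bb H^2})\|\bff\phi_h\|_{1,h}$. Taking the supremum over $\bff\phi_h$ gives
\[
\|\mathcal R_h^{j+1}\|_{-1,h}\le Ck\,(h+k^\gamma)\,Q,
\]
where $Q$ is a polynomial in $\|\bff m\|_{L^\infty(0,T;\bb H^3)}$ and $\|\bff m\|_{C^\gamma([0,T];\bb H^2)}$.

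\emph{Conclusion.} Then
\[
\sum_j\frac1k\|\mathcal R_h^{j+1}\|_{-1,h}^2\le C\,Tk^{-1}\cdot k\,(h+k^\gamma)^2Q^2\le C(h^2+k^{2\gamma})Q^2 .
\]
Taking expectations and using \eqref{eq:exact-solution-regularity-sllg} with a sufficiently large $q$, together with Hölder's inequality, yields \eqref{eq:consistency-bound-sllg}.

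\emph{Main obstacle.} The hardest step is the damping term in the dual norm. After the same integration by parts, the double cross product produces terms of the form $\partial_x[(\bff\phi_h\times\bff a)\times\bff a]$. We must verify that the lumped quadrature error for $I_h[\bff m\times(\bff m\times\partial_{xx}\bff m)]$ stays $O(h)$ against $\|\bff\phi_h\|_{1,h}$; this is exactly what \eqref{eq:product-interpolation-double-sllg} and the 1D embeddings provide. The rate in time is limited to $k^\gamma$, rather than first order, because $\bff m$ is only $C^\gamma$ in time.
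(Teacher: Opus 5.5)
Your proposal is correct and follows essentially the same route as the paper: you split off the mass-lumping quadrature residual via \eqref{eq:smooth-lumped-quadrature-sllg}, rewrite the discrete terms through the definition of $\Delta_h$ and the triple product, and then bound a spatial interpolation part by $Ckh$ using \eqref{eq:product-interpolation-single-sllg}--\eqref{eq:product-interpolation-double-sllg} and a temporal part by $Ck^{1+\gamma}$ using H\"older continuity in time (the paper only needs $C^\gamma([0,T];\bb H^1)$ there, but your $\bb H^2$ version is also available from \eqref{eq:exact-solution-regularity-sllg}). There are two harmless slips. First, the triple-product step should read $\inpro{\bff a\times\bff b}{\bff\phi_h}_h=+\inpro{\bff b}{I_h(\bff\phi_h\times\bff a)}_h$, so your discrete and continuous terms both carry the wrong sign; since the error is consistent, the bound is unaffected. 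Second, in your closing remark, the quadrature error for $I_h[\bff m\times(\bff m\times\partial_{xx}\bff m)]$ is controlled by \eqref{eq:smooth-lumped-quadrature-sllg} with $\bff f=\bff m\times(\bff m\times\partial_{xx}\bff m)\in\bb H^1$, whereas \eqref{eq:product-interpolation-double-sllg} is what handles the spatial interpolation part of the damping term.
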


\begin{proof}
	We split
	\[
	\mathcal R_{h,\mathrm p}^{j+1}
	=
	\widetilde{\mathcal R}_{h,\mathrm p}^{j+1}
	+\mathcal Q_{h,\mathrm p}^{j+1},
	\qquad
	\mathcal R_{h,\mathrm d}^{j+1}
	=
	\widetilde{\mathcal R}_{h,\mathrm d}^{j+1}
	+\mathcal Q_{h,\mathrm d}^{j+1},
	\]
	where the quadrature residuals
	\begin{align*}
		\mathcal Q_{h,\mathrm p}^{j+1}(\bff\phi_h)
		&:={}
		\kappa\int_{t_j}^{t_{j+1}}
		\left[
		\inpro{
			I_h\left(
			\bff m(s)\times\partial_{xx}\bff m(s)
			\right)
		}{\bff\phi_h}_h
		-\inpro{
			\bff m(s)\times\partial_{xx}\bff m(s)
		}{\bff\phi_h}
		\right]\ds,
		\\
		\mathcal Q_{h,\mathrm d}^{j+1}(\bff\phi_h)
		&:={}
		-\alpha\kappa\int_{t_j}^{t_{j+1}}
		\left[
		\inpro{
			I_h\left[
			\bff m(s)\times
			\left(
			\bff m(s)\times\partial_{xx}\bff m(s)
			\right)
			\right]
		}{\bff\phi_h}_h
		-\inpro{
			\bff m(s)\times
			\left(
			\bff m(s)\times\partial_{xx}\bff m(s)
			\right)
		}{\bff\phi_h}
		\right]\ds.
	\end{align*}
	Thus, the tilded residuals $\widetilde{\mathcal R}_{h,\mathrm p}^{j+1}$ and $\widetilde{\mathcal R}_{h,\mathrm d}^{j+1}$ are obtained by replacing the exact mass-lumped pairings by standard $\bb L^2$-pairings.
	
	
	We first estimate the precession residual. By the definition of $\Delta_h$ and applying integration by parts, we add and subtract a common term to obtain
	$\widetilde{\mathcal R}_{h,\mathrm p}^{j+1}
	=\mathcal R_{h,\mathrm p,1}^{j+1}
	+\mathcal R_{h,\mathrm p,2}^{j+1}$, where
	\begin{align*}
		\mathcal R_{h,\mathrm p,1}^{j+1}(\bff\phi_h)
		&:={}
		\kappa k
		\left[
		\inpro{\partial_xI_h\bff m^{j+1}}{
			\partial_xI_h\left(
			\bff\phi_h\times I_h\overline{\bff m}^{j+\frac12}
			\right)
		}
		-
		\inpro{\partial_x\bff m^{j+1}}{
			\partial_x\left(
			\bff\phi_h\times\overline{\bff m}^{j+\frac12}
			\right)
		}
		\right],
		\\
		\mathcal R_{h,\mathrm p,2}^{j+1}(\bff\phi_h)
		&:={}
		\kappa\int_{t_j}^{t_{j+1}}
		\left[
		\inpro{\partial_x\bff m^{j+1}}{
			\partial_x\left(
			\bff\phi_h\times\overline{\bff m}^{j+\frac12}
			\right)
		}
		-
		\inpro{\partial_x\bff m(s)}{
			\partial_x\left(\bff\phi_h\times\bff m(s)\right)
		}
		\right]\ds.
	\end{align*}
	Using the interpolation estimate, \eqref{eq:Ih-product-H1-sllg},
	and \eqref{eq:product-interpolation-single-sllg}, we obtain
	\begin{equation}\label{eq:precession-space-dual-bound-weak-sllg}
		\norm{\mathcal R_{h,\mathrm p,1}^{j+1}}{-1,h}
		\le
		Ckh
		\norm{\bff m^{j+1}}{\bb H^2}
		\norm{\overline{\bff m}^{j+\frac12}}{\bb H^2}.
	\end{equation}
	Furthermore, the one-dimensional $\bb{H}^1$-product estimate gives, for
	$s\in[t_j,t_{j+1}]$,
	\begin{align*}
		&\left|
		\inpro{\partial_x\bff m^{j+1}}{
			\partial_x(\bff\phi_h\times\overline{\bff m}^{j+\frac12})
		}
		-
		\inpro{\partial_x\bff m(s)}{
			\partial_x(\bff\phi_h\times\bff m(s))
		}
		\right|
		\\
		&\qquad
		\le
		C\left(1+\norm{\bff m}{L^\infty(0,T;\bb H^2)}^2\right)
		\left(
		\norm{\bff m(s)-\bff m^{j+1}}{\bb H^1}
		+
		\norm{\bff m(s)-\overline{\bff m}^{j+\frac12}}{\bb H^1}
		\right)
		\norm{\bff\phi_h}{1,h}.
	\end{align*}
	Consequently,
	\begin{equation}\label{eq:precession-time-dual-holder-bound-weak-sllg}
		\norm{\mathcal R_{h,\mathrm p,2}^{j+1}}{-1,h}
		\le
		Ck^{1+\gamma}
		\left(1+\norm{\bff m}{L^\infty(0,T;\bb H^2)}^2\right)
		\norm{\bff m}{C^\gamma([0,T];\bb H^1)}.
	\end{equation}
	Hence, from \eqref{eq:precession-space-dual-bound-weak-sllg} and \eqref{eq:precession-time-dual-holder-bound-weak-sllg}, we get
	\begin{equation}\label{eq:precession-residual-bound-sllg}
		\mathbb E\left[
		\sum_{j=0}^{J-1}\frac1k
		\norm{\widetilde{\mathcal R}_{h,\mathrm p}^{j+1}}{-1,h}^2
		\right]
		\le
		C_\gamma\left(h^2+k^{2\gamma}\right).
	\end{equation}
	
	To estimate $\widetilde{\mathcal R}_{h,\mathrm d}^{j+1}$, we observe that by the scalar triple-product identity and the definition of $\Delta_h$,
	\begin{align*}
		&
		\inpro{
			I_h\overline{\bff m}^{j+\frac12}
			\times\left[
			I_h\overline{\bff m}^{j+\frac12}
			\times\Delta_hI_h\bff m^{j+1}
			\right]
		}{\bff\phi_h}_h
		=
		-
		\inpro{\partial_xI_h\bff m^{j+1}}{
			\partial_xI_h\left[
			\left(
			\bff\phi_h\times I_h\overline{\bff m}^{j+\frac12}
			\right)
			\times I_h\overline{\bff m}^{j+\frac12}
			\right]
		}.
	\end{align*}
	Proceeding similarly as before, but using
	\eqref{eq:product-interpolation-double-sllg}, yields a decomposition
	$\widetilde{\mathcal R}_{h,\mathrm d}^{j+1}
	=\mathcal R_{h,\mathrm d,1}^{j+1}
	+\mathcal R_{h,\mathrm d,2}^{j+1}$ with
	\begin{align*}
		\norm{\mathcal R_{h,\mathrm d,1}^{j+1}}{-1,h}
		&\le
		Ckh
		\norm{\bff m^{j+1}}{\bb H^2}
		\norm{\overline{\bff m}^{j+\frac12}}{\bb H^2}^2,
		\\
		\norm{\mathcal R_{h,\mathrm d,2}^{j+1}}{-1,h}
		&\le
		Ck^{1+\gamma}
		\left(1+\norm{\bff m}{L^\infty(0,T;\bb H^2)}^3\right)
		\norm{\bff m}{C^\gamma([0,T];\bb H^1)}.
	\end{align*}
	Therefore,
	\begin{equation}\label{eq:damping-time-bound}
		\mathbb E\left[
		\sum_{j=0}^{J-1}\frac1k
		\norm{\widetilde{\mathcal R}_{h,\mathrm d}^{j+1}}{-1,h}^2
		\right]
		\le
		C_\gamma\left(h^2+k^{2\gamma}\right).
	\end{equation}
	
	It remains to estimate the quadrature residuals $\mathcal Q_{h,\mathrm p}$ and $\mathcal Q_{h,\mathrm d}$. By \eqref{eq:smooth-lumped-quadrature-sllg}, we have
	\begin{align*}
		\norm{\mathcal Q_{h,\mathrm p}^{j+1}}{-1,h}
		+\norm{\mathcal Q_{h,\mathrm d}^{j+1}}{-1,h}
		&\leq
		Ch\int_{t_j}^{t_{j+1}}
		\left(
		\norm{
			\bff m(s)\times\partial_{xx}\bff m(s)
		}{\bb H^1}
		+\norm{
			\bff m(s)\times
			\left(
			\bff m(s)\times\partial_{xx}\bff m(s)
			\right)
		}{\bb H^1}
		\right)\ds
		\\
		&\leq
		Ckh  \left(1+\norm{\bff m}{L^\infty(0,T;\bb H^3)}^3\right),
	\end{align*}
	where in the last step we used the one-dimensional $\bb{H}^1$-product estimate and the regularity of $\bff{m}$. This implies
	\begin{equation}\label{eq:quadrature-residual-bound-sllg}
		\mathbb E\left[
		\sum_{j=0}^{J-1}\frac1k
		\left(
		\norm{\mathcal Q_{h,\mathrm p}^{j+1}}{-1,h}^2
		+\norm{\mathcal Q_{h,\mathrm d}^{j+1}}{-1,h}^2
		\right)
		\right]
		\le
		Ch^2.
	\end{equation}
	Combining \eqref{eq:precession-residual-bound-sllg},
	\eqref{eq:damping-time-bound}, and
	\eqref{eq:quadrature-residual-bound-sllg} proves
	\eqref{eq:consistency-bound-sllg}, as required.
\end{proof}

For each $\Lambda>0$ and $j=0,\ldots,J$, define a subset
\begin{equation}\label{eq:localisation-set-sllg}
	\Omega_{\Lambda,j}
	:=
	\left\{
	\max_{0\le r\le j}
	\norm{\partial_x\bff m_h^r}{\bb L^2}^4
	+
	\sup_{0\le t\le t_j}
	\norm{\bff m(t)}{\bb H^2}^4
	\le\Lambda
	\right\}.
\end{equation}
Then, $\Omega_{\Lambda,j+1}\subset\Omega_{\Lambda,j}$ and
$\mathbf1_{\Omega_{\Lambda,j}}$ is $\mathcal F_{t_j}$-measurable. Moreover, by \eqref{eq:exact-solution-regularity-sllg} and \eqref{eq:p-moment-stability-sllg}, there exists a constant $C>0$ independent of $h$, $k$, and $\Lambda$ such that
\begin{equation}\label{eq:localisation-probability-sllg}
	\mathbb P\left[\Omega_{\Lambda,J}^{\complement}\right]
	\le
	\frac{C}{\Lambda}.
\end{equation}

The consistency error in the stochastic part is handled by the following technical lemma.

\begin{lemma}[Stochastic error term]\label{lem:stochastic-error-term-sllg}
	Let $\varepsilon\in(0,1)$ and $\Lambda>0$ be fixed. Let $\bff{m}$ be the strong solution of \eqref{eq:sllg-model} and $\{\bff m_h^j\}_{j=0}^J$ be an adapted solution of
	\eqref{eq:fully-discrete-midpoint-sllg-laplacian}. Let $W_h$ be given by \eqref{eq:gh Wh}. For $r=0,1,\ldots,J$, define
	\begin{align}
		\mathcal S_r
		&:=
		\sum_{j=0}^{r-1}
		\mathbf1_{\Omega_{\Lambda,j}}
		\left[
		\inpro{
			\displaystyle
			\int_{t_j}^{t_{j+1}}
			I_h\bff m(s)\times\circ\,\dW_h(s)
		}{
			\bff\theta_h^{j+1}
		}_h
		-
		\inpro{
			\bff m_h^{j+\frac12}\times\hat{\Delta}_jW_h
		}{
			\bff\theta_h^{j+1}
		}_h
		\right].
		\label{eq:stochastic-error-term-definition-sllg}
	\end{align}
	Then, for every $\delta>0$ and $\eta>0$, there exists a constant
	$C_{\delta,\eta,\varepsilon,\Lambda,T}>0$, independent of $h$ and $k$,
	such that
	\begin{align}
		\mathbb E
		\left[
		\sup_{0\le r\le J}
		|\mathcal S_r|
		\right]
		&\le
		\delta
		\mathbb E
		\left[
		\sum_{j=0}^{J-1}
		\mathbf1_{\Omega_{\Lambda,j}}
		\norm{\bff\theta_h^{j+1}-\bff\theta_h^j}{h}^2
		\right]
		+
		\eta
		\mathbb E
		\left[
		\sup_{0\le j\le J}
		\mathbf1_{\Omega_{\Lambda,j}}
		\norm{\bff\theta_h^j}{h}^2
		\right]
		\nonumber
		\\
		&\quad
		+
		C_{\delta,\eta,\varepsilon,\Lambda,T}
		\left(
		k^{1-\varepsilon}
		+
		k
		\sum_{j=0}^{J-1}
		\mathbb E
		\left[
		\mathbf1_{\Omega_{\Lambda,j}}
		\norm{\bff\theta_h^{j+1}}{h}^2
		\right]
		\right).
		\label{eq:stochastic-error-term-estimate-sllg}
	\end{align}
\end{lemma}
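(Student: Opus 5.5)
I would start by expanding both stochastic terms into Itô form plus corrections, working nodewise since the lumped inner product is diagonal. For the exact term, Stratonovich-to-Itô conversion at each node gives
\[
\int_{t_j}^{t_{j+1}} I_h\bff m(s)\times\circ\,\dW_h(s)
=\int_{t_j}^{t_{j+1}} I_h\bff m(s)\times\dW_h(s)
+\frac12\sum_{\ell}\int_{t_j}^{t_{j+1}} I_h\bigl[(\bff m(s)\times\bff g_\ell)\times\bff g_\ell\bigr]\ds .
\]
For the discrete term, I would substitute $\bff m_h^{j+\frac12}=\bff m_h^j+\frac12(\bff m_h^{j+1}-\bff m_h^j)$. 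Then I would insert the scheme itself, in its nodal form, for the increment:
\[
\bff m_h^{j+1}-\bff m_h^j
= I_h\bigl(\bff m_h^{j+\frac12}\times\hat{\Delta}_jW_h\bigr)+k\,(\text{drift}).
\]
Iterating once more, with $\bff m_h^{j+\frac12}$ replaced by $\bff m_h^j$ plus a remainder, the midpoint term becomes
\[
\bff m_h^j\times\hat{\Delta}_jW_h
+\tfrac12 I_h\bigl[(\bff m_h^j\times\hat{\Delta}_jW_h)\times\hat{\Delta}_jW_h\bigr]
+\text{higher-order terms}.
\]
The higher-order terms are cubic in the Brownian increment, or are drift times increment. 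I then write the quadratic term as
\[
\sum_{\ell,i}\bigl(\bff m_h^j\times\bff g_{\ell,h}\bigr)\times\bff g_{i,h}\,\hat{\Delta}_j\beta_\ell\,\hat{\Delta}_j\beta_i .
\]
Here I split off the diagonal mean $k\,\delta_{\ell i}$, which matches the Itô correction above up to a time-discretisation error. What remains is the centred part $\hat{\Delta}_j\beta_\ell\hat{\Delta}_j\beta_i-k\delta_{\ell i}$, covering both the diagonal and the off-diagonal products.

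Each resulting piece is paired with $\bff\theta_h^{j+1}=\bff\theta_h^j+(\bff\theta_h^{j+1}-\bff\theta_h^j)$. The pieces fall into three groups.

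\textbf{Martingale parts.} These are the Itô difference
\[
\int_{t_j}^{t_{j+1}}\bigl(I_h\bff m(s)-\bff m_h^j\bigr)\times\dW_h(s)
\]
and the centred quadratic term. Paired with $\mathbf1_{\Omega_{\Lambda,j}}\bff\theta_h^j$, which is $\mathcal F_{t_j}$-measurable, they are martingale increments.
\begin{itemize}
\item I apply the discrete BDG inequality (or Burkholder--Davis--Gundy for the integral) with exponent $1$.
\item For the Itô difference I split $I_h\bff m(s)-\bff m_h^j=(I_h\bff m(s)-I_h\bff m^j)+\bff\theta_h^j$. The square function is then bounded by $\sup_j\mathbf1_{\Omega_{\Lambda,j}}\norm{\bff\theta_h^j}{h}$ times
\[
\Bigl(\sum_j k\bigl(\norm{\bff\theta_h^j}{h}^2+\text{Hölder increment}^2\bigr)\Bigr)^{1/2}.
\]
Young's inequality produces the $\eta$-term, $k\sum\mathbb E[\mathbf1\norm{\bff\theta_h^{j+1}}{h}^2]$ (after an index shift; the $j=0$ term vanishes since $\bff\theta_h^0=0$), and $k^{2\gamma}\le k^{1-\varepsilon}$ with $\gamma=\frac{1-\varepsilon}{2}$, using~\eqref{eq:exact-solution-regularity-sllg}.
\item The centred quadratic term has conditional variance $O(k^2)$ per step, since $\Gamma<\infty$ and $\norm{\bff m_h^j}{\bb L^\infty}\le1$. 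It is treated the same way.
\end{itemize}

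\textbf{Pairings with $\bff\theta_h^{j+1}-\bff\theta_h^j$.} By Young's inequality these give $\delta\norm{\bff\theta_h^{j+1}-\bff\theta_h^j}{h}^2$ plus $C_\delta$ times the squared norm of the stochastic residual. That residual is $O(k^{1/2})$ in $L^2(\Omega)$ per step, which is too much. So in this pairing I would only subtract the leading martingale $(I_h\bff m^j-\bff m_h^j)\times\hat{\Delta}_jW_h=\bff\theta_h^j\times\hat{\Delta}_jW_h$ (nodewise). This leaves increments of size $O(k^{1/2+\gamma})+O(k)$, whose squares sum to $k^{2\gamma}$. The $\bff\theta_h^j\times\hat{\Delta}_jW_h$ piece gives $C\norm{\bff\theta_h^j}{h}^2 k$ in expectation, which again sits in the Gronwall term.

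\textbf{Non-martingale remainders.} These are: the difference of the Itô corrections, $\int(\bff m(s)-\bff m_h^j)\ds$ (bounded by $k\norm{\bff\theta_h^j}{h}+k^{1+\gamma}$); the cubic-in-increment terms; and the drift-times-increment terms. I bound them using the localisation. On $\Omega_{\Lambda,j}$, Proposition~\ref{prop:p-moment-stability-sllg} and Lemma~\ref{lem:mh-increment-estimates-sllg} control $k\norm{\bff m_h^{j+\frac12}\times\Delta_h\bff m_h^{j+1}}{h}$ and the fourth moments of the increments. For example,
\[
\mathbb E\bigl|\inpro{\text{cubic}}{\bff\theta_h^{j+1}}_h\bigr|\le Ck^{3/2}\bigl(\mathbb E\norm{\bff\theta_h^{j+1}}{h}^2\bigr)^{1/2},
\]
whose sum is absorbed as $k^{2}\cdot J+k\sum\mathbb E\norm{\bff\theta_h^{j+1}}{h}^2$.

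\textbf{Main obstacle.} The drift part of the increment multiplies $\hat{\Delta}_jW_h$, and I would control it by Cauchy--Schwarz together with the $L^2$-in-time bound on $k\norm{\bff m_h^{j+\frac12}\times\Delta_h\bff m_h^{j+1}}{h}^2$. This is the step where the rate degrades to $k^{1-\varepsilon}$. The available bounds are only summable, not uniform in $j$, so I would combine Hölder's inequality with high moments, $\mathbb E\max_j|\hat{\Delta}_jW_h|^2\lesssim k\log(1/k)\le C_\varepsilon k^{1-\varepsilon}$, with the localisation bound $\Lambda$. This is where the constant acquires its dependence on $\varepsilon$ and $\Lambda$.
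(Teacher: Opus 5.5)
Your proposal is correct and follows essentially the paper's route: It\^o conversion of the exact term, re-insertion of the scheme into $\frac12(\bff m_h^{j+1}-\bff m_h^j)\times\hat{\Delta}_jW_h$, and matching the diagonal mean $k\delta_{\ell i}$ with the It\^o correction; the centred diagonal and off-diagonal products, together with the It\^o integral of $I_h(\bff m(s)-\bff m^j)$, are treated as martingales against $\mathbf 1_{\Omega_{\Lambda,j}}\bff\theta_h^j$, and Young's inequality handles the pairings with $\bff\theta_h^{j+1}-\bff\theta_h^j$ once $\bff\theta_h^j\times\hat{\Delta}_jW_h$ has been isolated. The only differences are local: you iterate the scheme once more so that the quadratic coefficients sit at $\bff m_h^j$ (the paper keeps $\bff m_h^{j+\frac12}$ and absorbs the difference via the fourth-moment increment bound \eqref{eq:mh-increment-fourth-root-sllg}), and you bound the drift-times-noise term $\mathcal S_r^{(5)}$ by Cauchy--Schwarz in time plus a maximal moment of $\hat{\Delta}_jW_h$ rather than by the paper's $\rho$-dependent Young/H\"older/reversed-Jensen argument, with both routes giving $k^{1-\varepsilon}$ (the localisation is not actually needed for that term, and your exponent-one BDG/Davis inequality is exactly what makes the Young step against $\sup_j\mathbf 1_{\Omega_{\Lambda,j}}\norm{\bff\theta_h^j}{h}^2$ work).
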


\begin{proof}
	We begin the proof with several observations. First, note that applying \eqref{eq:exact-solution-regularity-sllg} with the H\"older exponent $\frac12 (1-\varepsilon) \in(0,\frac12)$ gives
	\begin{align}
		&\sum_{j=0}^{J-1}\int_{t_j}^{t_{j+1}}
		\mathbb E\left[
		\norm{\bff m(s)-\bff m^j}{\bb H^1}^2
		+\norm{\bff m(s)-\bff m^{j+1}}{\bb H^1}^2
		\right]\ds
		\le
		C_\varepsilon k^{1-\varepsilon}.
		\label{eq:exact-time-increment-sum-sllg}
	\end{align}
	Next, the Stratonovich integral is rewritten as a vector-valued It\^o
	integral to obtain
	\begin{align}
		\inpro{
			\int_{t_j}^{t_{j+1}} I_h\bff m(s)\times\circ\, \dW_h(s)
		}{
			\bff\theta_h^{j+1}
		}_h
		\nonumber
		&
		=
		\inpro{
			\sum_{\ell=1}^{\infty}
			\int_{t_j}^{t_{j+1}} I_h\bff m(s)\times\bff g_{\ell,h}\,\dbeta_\ell(s)
		}{
			\bff\theta_h^{j+1}
		}_h
		\nonumber
		\\
		&\quad
		+
		\frac12
		\sum_{\ell=1}^{\infty}
		\int_{t_j}^{t_{j+1}}
		\inpro{
			\left(I_h\bff m(s)\times\bff g_{\ell,h}\right)\times\bff g_{\ell,h}
		}{
			\bff\theta_h^{j+1}
		}_h\,\ds .
		\label{eq:stratonovich-ito-paired-sllg}
	\end{align}
	On the other hand,
	\begin{align}
		\inpro{
			\bff m_h^{j+\frac12}\times\hat{\Delta}_jW_h
		}{
			\bff\theta_h^{j+1}
		}_h
		&=
		\sum_{\ell=1}^{\infty}
		\hat{\Delta}_j\beta_\ell
		\inpro{
			\bff m_h^j\times\bff g_{\ell,h}
		}{
			\bff\theta_h^{j+1}
		}_h
		\nonumber
		\\
		&\quad
		+
		\frac12
		\sum_{\ell=1}^{\infty}
		\hat{\Delta}_j\beta_\ell
		\inpro{
			(\bff m_h^{j+1}-\bff m_h^j)\times\bff g_{\ell,h}
		}{
			\bff\theta_h^{j+1}
		}_h .
		\label{eq:midpoint-noise-expanded-sllg}
	\end{align}
	
	We aim rewrite the last term in \eqref{eq:midpoint-noise-expanded-sllg}.
	The fully discrete scheme \eqref{eq:fully-discrete-midpoint-sllg-laplacian} implies the following identity, which holds at every node:
	\begin{align*}
		\bff m_h^{j+1}-\bff m_h^j
		&=
		-\alpha\kappa k\,\bff m_h^{j+\frac12}\times
		\left(\bff m_h^{j+\frac12}\times\Delta_h\bff m_h^{j+1}\right)
		+
		\kappa k\,\bff m_h^{j+\frac12}\times\Delta_h\bff m_h^{j+1}
		+
		\bff m_h^{j+\frac12}\times\hat{\Delta}_jW_h.
	\end{align*}
	Therefore, using this in the last mass-lumped inner product term in \eqref{eq:midpoint-noise-expanded-sllg} gives
	\begin{align}
		&\frac12
		\sum_{\ell=1}^{\infty}
		\hat{\Delta}_j\beta_\ell
		\inpro{
			(\bff m_h^{j+1}-\bff m_h^j)\times\bff g_{\ell,h}
		}{
			\bff\theta_h^{j+1}
		}_h
		\nonumber
		\\
		&\quad
		=
		\frac{k}{2}
		\sum_{\ell=1}^{\infty}
		\inpro{
			\left(\bff m_h^{j+\frac12}\times\bff g_{\ell,h}\right)\times\bff g_{\ell,h}
		}{
			\bff\theta_h^{j+1}
		}_h
		+
		\frac12
		\sum_{\ell=1}^{\infty}
		\left(|\hat{\Delta}_j\beta_\ell|^2-k\right)
		\inpro{
			\left(\bff m_h^{j+\frac12}\times\bff g_{\ell,h}\right)\times\bff g_{\ell,h}
		}{
			\bff\theta_h^{j+1}
		}_h
		\nonumber
		\\
		&\qquad
		+
		\frac{\kappa k}{2}
		\sum_{\ell=1}^{\infty}
		\hat{\Delta}_j\beta_\ell
		\inpro{
			\left(\bff m_h^{j+\frac12}\times\Delta_h\bff m_h^{j+1}
			-
			\alpha\bff m_h^{j+\frac12}\times
			\left(\bff m_h^{j+\frac12}\times\Delta_h\bff m_h^{j+1}\right)\right)\times\bff g_{\ell,h}
		}{
			\bff\theta_h^{j+1}
		}_h
		\nonumber
		\\
		&\qquad
		+
		\frac12
		\sum_{\ell=1}^{\infty}
		\sum_{\substack{q=1\\q\ne\ell}}^{\infty}
		\hat{\Delta}_j\beta_\ell\,\hat{\Delta}_j\beta_q
		\inpro{
			\left(\bff m_h^{j+\frac12}\times\bff g_{q,h}\right)\times\bff g_{\ell,h}
		}{
			\bff\theta_h^{j+1}
		}_h .
		\label{eq:midpoint-noise-second-order-expanded-sllg}
	\end{align}
	Combining \eqref{eq:stratonovich-ito-paired-sllg}--
	\eqref{eq:midpoint-noise-second-order-expanded-sllg}, and using the fact that
	$I_h\bff m^j-\bff m_h^j=\bff\theta_h^j$, we can decompose \eqref{eq:stochastic-error-term-definition-sllg} as
	$\mathcal S_r
	=
	\sum_{\nu=1}^6
	\mathcal S_r^{(\nu)}$,
	where
	\begin{align}
		\mathcal S_r^{(1)}
		&:=
		\sum_{j=0}^{r-1}
		\mathbf1_{\Omega_{\Lambda,j}}
		\inpro{
			\displaystyle
			\sum_{\ell=1}^{\infty}
			\int_{t_j}^{t_{j+1}}
			I_h(\bff m(s)-\bff m^j)
			\times\bff g_{\ell,h}\,\dbeta_\ell(s)
		}{
			\bff\theta_h^{j+1}
		}_h .
		\label{eq:S1-def-sllg}
		\\
		\mathcal S_r^{(2)}
		&:=
		\sum_{j=0}^{r-1}
		\mathbf1_{\Omega_{\Lambda,j}}
		\inpro{\bff\theta_h^j\times\hat{\Delta}_jW_h}{\bff\theta_h^{j+1}}_h,
		\label{eq:S2-def-sllg}
		\\
		\mathcal S_r^{(3)}
		&:=
		\frac12
		\sum_{j=0}^{r-1}
		\mathbf1_{\Omega_{\Lambda,j}}
		\sum_{\ell=1}^{\infty}
		\int_{t_j}^{t_{j+1}}
		\inpro{\left(\left(I_h\bff m(s)-\bff m_h^{j+\frac12}\right) \times\bff g_{\ell,h}\right)\times\bff g_{\ell,h}}{\bff\theta_h^{j+1}}_h
		\,\ds,
		\label{eq:S3-def-sllg}
		\\
		\mathcal S_r^{(4)}
		&:=
		-\frac12
		\sum_{j=0}^{r-1}
		\mathbf1_{\Omega_{\Lambda,j}}
		\sum_{\ell=1}^{\infty}
		\left(|\hat{\Delta}_j\beta_\ell|^2-k\right)
		\inpro{\left(\bff m_h^{j+\frac12}\times\bff g_{\ell,h}\right)\times\bff g_{\ell,h}}{\bff\theta_h^{j+1}}_h,
		\label{eq:S4-def-sllg}
		\\
		\mathcal S_r^{(5)}
		&:=
		-\frac{\kappa k}{2}
		\sum_{j=0}^{r-1}
		\mathbf1_{\Omega_{\Lambda,j}}
		\sum_{\ell=1}^{\infty}
		\hat{\Delta}_j\beta_\ell
		\inpro{\left(\bff m_h^{j+\frac12}\times\Delta_h\bff m_h^{j+1}-\alpha\bff m_h^{j+\frac12}\times\left(\bff m_h^{j+\frac12}\times\Delta_h\bff m_h^{j+1}\right)\right)\times\bff g_{\ell,h}}{\bff\theta_h^{j+1}}_h,
		\label{eq:S5-def-sllg}
		\\
		\mathcal S_r^{(6)}
		&:=
		-\frac12
		\sum_{j=0}^{r-1}
		\mathbf1_{\Omega_{\Lambda,j}}
		\sum_{\ell=1}^{\infty}
		\sum_{\substack{q=1\\q\ne\ell}}^{\infty}
		\hat{\Delta}_j\beta_\ell\,\hat{\Delta}_j\beta_q
		\inpro{\left(\bff m_h^{j+\frac12}\times\bff g_{q,h}\right)\times\bff g_{\ell,h}}{\bff\theta_h^{j+1}}_h.
		\label{eq:S6-def-sllg}
	\end{align}
	
	We estimate these six terms separately.
	For $\mathcal S_r^{(1)}$, write
	$\bff\theta_h^{j+1}=\bff\theta_h^j+
	(\bff\theta_h^{j+1}-\bff\theta_h^j)$. For the part paired with $\bff\theta_h^{j+1}-\bff\theta_h^j$, using Young's inequality followed by the It\^o isometry and \eqref{eq:exact-time-increment-sum-sllg}, we have
	\begin{align*}
		&\mathbb E\left[
		\sup_{0\le r\le J}
		\left|
		\sum_{j=0}^{r-1}
		\mathbf1_{\Omega_{\Lambda,j}}
		\inpro{
			\sum_{\ell=1}^{\infty}
			\int_{t_j}^{t_{j+1}}
			I_h(\bff m(s)-\bff m^j)
			\times\bff g_{\ell,h}\,\dbeta_\ell(s)
		}{
			\bff\theta_h^{j+1}-\bff\theta_h^j
		}_h
		\right|
		\right]
		\nonumber\\
		&\quad\le
		\delta\mathbb E\left[
		\sum_{j=0}^{J-1}
		\mathbf1_{\Omega_{\Lambda,j}}
		\norm{\bff\theta_h^{j+1}-\bff\theta_h^j}{h}^2
		\right]
		+
		C_\delta\sum_{j=0}^{J-1}
		\mathbb E\left[
		\norm{
			\displaystyle
			\sum_{\ell=1}^{\infty}
			\int_{t_j}^{t_{j+1}}
			I_h(\bff m(s)-\bff m^j)
			\times\bff g_{\ell,h}\,\dbeta_\ell(s)
		}{h}^2
		\right]
		\nonumber\\
		&\quad\le
		\delta\mathbb E\left[
		\sum_{j=0}^{J-1}
		\mathbf1_{\Omega_{\Lambda,j}}
		\norm{\bff\theta_h^{j+1}-\bff\theta_h^j}{h}^2
		\right]
		+C_{\delta,\varepsilon}k^{1-\varepsilon}.
	\end{align*}
	The part paired with $\bff\theta_h^j$ is a martingale. By the discrete
	BDG inequality, \eqref{eq:Ih-product-L2-sllg}, and Young's inequality,
	we obtain for every $\eta>0$,
	\begin{align*}
		&\mathbb E\left[
		\sup_{0\le r\le J}
		\left|
		\sum_{j=0}^{r-1}
		\mathbf1_{\Omega_{\Lambda,j}}
		\sum_{\ell=1}^{\infty}
		\int_{t_j}^{t_{j+1}}
		\inpro{
			I_h(\bff m(s)-\bff m^j)\times\bff g_{\ell,h}
		}{
			\bff\theta_h^j
		}_h
		\dbeta_\ell(s)
		\right|
		\right]
		\nonumber\\
		&\qquad\le
		\eta\mathbb E\left[
		\sup_{0\le j\le J}
		\mathbf1_{\Omega_{\Lambda,j}}
		\norm{\bff\theta_h^j}{h}^2
		\right]
		+C_{\eta,\varepsilon}k^{1-\varepsilon}.
	\end{align*}
	Thus, we have for any $\delta,\eta>0$,
	\begin{align}
		\mathbb E\left[
		\sup_{0\le r\le J}|\mathcal S_r^{(1)}|
		\right]
		&\le
		\delta\mathbb E\left[
		\sum_{j=0}^{J-1}
		\mathbf1_{\Omega_{\Lambda,j}}
		\norm{\bff\theta_h^{j+1}-\bff\theta_h^j}{h}^2
		\right]
		+
		\eta\mathbb E\left[
		\sup_{0\le j\le J}
		\mathbf1_{\Omega_{\Lambda,j}}
		\norm{\bff\theta_h^j}{h}^2
		\right]
		+C_{\delta,\eta,\varepsilon}k^{1-\varepsilon}.
		\label{eq:S1-estimate-sllg}
	\end{align}
	
	For $\mathcal S_r^{(2)}$, we note that $\inpro{\bff\theta_h^j\times\hat{\Delta}_jW_h}{\bff\theta_h^{j+1}}_h
	=
	\inpro{\bff\theta_h^j\times\hat{\Delta}_jW_h}{\bff\theta_h^{j+1}-\bff\theta_h^j}_h$.
	Therefore, by Young's inequality and the independence of $\hat{\Delta}_jW_h$ from
	$\mathcal F_{t_j}$,
	\begin{align}
		\mathbb E\left[
		\sup_{0\le r\le J}|\mathcal S_r^{(2)}|
		\right]
		&\le
		\delta\mathbb E\left[
		\sum_{j=0}^{J-1}
		\mathbf1_{\Omega_{\Lambda,j}}
		\norm{\bff\theta_h^{j+1}-\bff\theta_h^j}{h}^2
		\right]
		+
		C_\delta k\sum_{j=0}^{J-1}
		\mathbb E\left[
		\mathbf1_{\Omega_{\Lambda,j}}
		\norm{\bff\theta_h^j}{h}^2
		\right]
		\nonumber\\
		&\leq
		\delta\mathbb E\left[
		\sum_{j=0}^{J-1}
		\mathbf1_{\Omega_{\Lambda,j}}
		\norm{\bff\theta_h^{j+1}-\bff\theta_h^j}{h}^2
		\right]
		+
		C_\delta k\sum_{j=0}^{J-1}
		\mathbb E\left[
		\mathbf1_{\Omega_{\Lambda,j}}
		\norm{\bff\theta_h^{j+1}}{h}^2
		\right],
		\label{eq:S2-estimate-sllg}
	\end{align}
	where in the last step we used the fact that $\bff\theta_h^0=\bff0$ and
	$\Omega_{\Lambda,j}\subset\Omega_{\Lambda,j-1}$ for $j\ge 1$.

	Next, we estimate $\mathcal S_r^{(3)}$. We first record the consequence of
	the product estimate \eqref{eq:Ih-product-L2-sllg} which will be used repeatedly below. For
	$\bff v_h,\bff w_h\in\bb V_h$,
	\begin{align}
		\sum_{\ell=1}^{\infty}
		\left|
		\inpro{
			(\bff v_h\times\bff g_{\ell,h})\times\bff g_{\ell,h}
		}{
			\bff w_h
		}_h
		\right|
		&
		=
		\sum_{\ell=1}^{\infty}
		\left|
		\inpro{
			I_h\left[
			(\bff v_h\times\bff g_{\ell,h})\times\bff g_{\ell,h}
			\right]
		}{
			\bff w_h
		}_h
		\right|
		\nonumber
		\\
		&\le
		C
		\sum_{\ell=1}^{\infty}
		\norm{
			I_h\left[
			(\bff v_h\times\bff g_{\ell,h})\times\bff g_{\ell,h}
			\right]
		}{\bb L^2}
		\norm{\bff w_h}{h}
		\nonumber
		\\
		&
		\le
		C_\Gamma
		\norm{\bff v_h}{h}
		\norm{\bff w_h}{h},
		\label{eq:double-cross-noise-product-bound-sllg}
	\end{align}
	where we also used the discrete norm equivalence \eqref{eq:lumped-L2-equivalence-sllg} and the assumption \eqref{eq:noise-W1inf-l1-assumption-sllg}.
	Now, using
	\[
	I_h\bff m(s)-\bff m_h^{j+\frac12}
	=
	I_h(\bff m(s)-\bff m^{j+1})
	+
	\bff\theta_h^{j+1}
	+
	\frac12(\bff m_h^{j+1}-\bff m_h^j),
	\]
	we estimate the three resulting contributions in $\mathcal S_r^{(3)}$ directly. For the first, estimate~\eqref{eq:double-cross-noise-product-bound-sllg}, Young's inequality,
	the $\bb{L}^\infty$-stability of $I_h$, and \eqref{eq:exact-time-increment-sum-sllg} give
	\begin{align}
		&\mathbb E\left[
		\sup_{0\le r\le J}
		\left|
		\frac12\sum_{j=0}^{r-1}
		\mathbf1_{\Omega_{\Lambda,j}}
		\sum_{\ell=1}^{\infty}
		\int_{t_j}^{t_{j+1}}
		\inpro{
			\left(
			I_h(\bff m(s)-\bff m^{j+1})
			\times\bff g_{\ell,h}
			\right)\times\bff g_{\ell,h}
		}{
			\bff\theta_h^{j+1}
		}_h \ds
		\right|
		\right]
		\nonumber\\
		&\qquad\le
		C_\varepsilon k^{1-\varepsilon}
		+Ck\sum_{j=0}^{J-1}
		\mathbb E\left[
		\mathbf1_{\Omega_{\Lambda,j}}
		\norm{\bff\theta_h^{j+1}}{h}^2
		\right].
		\label{eq:S3-1-estimate-sllg}
	\end{align}
	The contribution containing $\bff\theta_h^{j+1}$ satisfies
	\begin{align*}
		&\mathbb E\left[
		\sup_{0\le r\le J}
		\left|
		\frac12\sum_{j=0}^{r-1}
		\mathbf1_{\Omega_{\Lambda,j}}
		\sum_{\ell=1}^{\infty}
		\int_{t_j}^{t_{j+1}}
		\inpro{
			(\bff\theta_h^{j+1}\times\bff g_{\ell,h})
			\times\bff g_{\ell,h}
		}{
			\bff\theta_h^{j+1}
		}_h\ds
		\right|
		\right]
		\le
		Ck\sum_{j=0}^{J-1}
		\mathbb E\left[
		\mathbf1_{\Omega_{\Lambda,j}}
		\norm{\bff\theta_h^{j+1}}{h}^2
		\right].
	\end{align*}
	Finally, \eqref{eq:double-cross-noise-product-bound-sllg}, Young's
	inequality, and the increment moment bound~\eqref{eq:mh-increment-second-moment-sllg} imply
	\begin{align}
		&\mathbb E\left[
		\sup_{0\le r\le J}
		\left|
		\frac14\sum_{j=0}^{r-1}
		\mathbf1_{\Omega_{\Lambda,j}}
		\sum_{\ell=1}^{\infty}
		\int_{t_j}^{t_{j+1}}
		\inpro{
			\left(
			(\bff m_h^{j+1}-\bff m_h^j)
			\times\bff g_{\ell,h}
			\right)\times\bff g_{\ell,h}
		}{
			\bff\theta_h^{j+1}
		}_h \ds
		\right|
		\right]
		\nonumber\\
		&\qquad\le
		Ck\sum_{j=0}^{J-1}
		\mathbb E\left[
		\norm{\bff m_h^{j+1}-\bff m_h^j}{h}^2
		\right]
		+
		Ck\sum_{j=0}^{J-1}
		\mathbb E\left[
		\mathbf1_{\Omega_{\Lambda,j}}
		\norm{\bff\theta_h^{j+1}}{h}^2
		\right]
		\nonumber\\
		&\qquad\le
		Ck
		+Ck\sum_{j=0}^{J-1}
		\mathbb E\left[
		\mathbf1_{\Omega_{\Lambda,j}}
		\norm{\bff\theta_h^{j+1}}{h}^2
		\right].
		\label{eq:S3-3-preliminary-sllg}
	\end{align}
	Combining \eqref{eq:S3-1-estimate-sllg}--\eqref{eq:S3-3-preliminary-sllg} gives
	\begin{align}
		\mathbb E\left[
		\sup_{0\le r\le J}|\mathcal S_r^{(3)}|
		\right]
		&\le
		C_\varepsilon k^{1-\varepsilon}
		+Ck\sum_{j=0}^{J-1}
		\mathbb E\left[
		\mathbf1_{\Omega_{\Lambda,j}}
		\norm{\bff\theta_h^{j+1}}{h}^2
		\right].
		\label{eq:S3-estimate-sllg}
	\end{align}
	
	For the term $\mathcal S_r^{(4)}$, we decompose
	\[
	\bff\theta_h^{j+1}
	=
	\bff\theta_h^j
	+
	(\bff\theta_h^{j+1}-\bff\theta_h^j),
	\qquad
	\bff m_h^{j+\frac12}
	=
	\bff m_h^j
	+
	\frac12(\bff m_h^{j+1}-\bff m_h^j).
	\]
	We first estimate the part paired with
	$\bff\theta_h^{j+1}-\bff\theta_h^j$. By Young's inequality, for any $\delta>0$,
	\begin{align}
		&\mathbf1_{\Omega_{\Lambda,j}}
		\left|
		\sum_{\ell=1}^{\infty}
		\left(|\hat{\Delta}_j\beta_\ell|^2-k\right)
		\inpro{
			\left(\bff m_h^{j+\frac12}\times\bff g_{\ell,h}\right)\times\bff g_{\ell,h}
		}{
			\bff\theta_h^{j+1}-\bff\theta_h^j
		}_h
		\right|
		\nonumber
		\\
		&\quad
		\le
		\delta\,
		\mathbf1_{\Omega_{\Lambda,j}}
		\norm{\bff\theta_h^{j+1}-\bff\theta_h^j}{h}^2
		+
		C_\delta
		\left(
		\sum_{\ell=1}^{\infty}
		\left||\hat{\Delta}_j\beta_\ell|^2-k\right|
		\norm{\bff g_{\ell,h}}{\bb L^\infty}^2
		\right)^2
		\norm{\bff m_h^{j+\frac12}}{h}^2 .
		\label{eq:S4A-young-sllg}
	\end{align}
	We note that by the assumption \eqref{eq:noise-W1inf-l1-assumption-sllg} and the fact that $\mathbb E\left[\left||\hat{\Delta}_j\beta_\ell|^2-k\right|^2\right]= 2k^2$, we have
	\begin{equation}\label{eq:Delta-beta-g}
		\mathbb E
		\left[
		\left(
		\sum_{\ell=1}^{\infty}
		\left||\hat{\Delta}_j\beta_\ell|^2-k\right|
		\norm{\bff g_{\ell,h}}{\bb L^\infty}^2
		\right)^2
		\right]
		\le
		Ck^2.
	\end{equation}
	Therefore, noting \eqref{eq:Linfty-stability-sllg}, summing \eqref{eq:S4A-young-sllg} over $j$ and taking expectations, we obtain
	\begin{align}
		&\mathbb E
		\left[
		\sum_{j=0}^{J-1}
		\mathbf1_{\Omega_{\Lambda,j}}
		\left|
		\sum_{\ell=1}^{\infty}
		\left(|\hat{\Delta}_j\beta_\ell|^2-k\right)
		\inpro{
			\left(\bff m_h^{j+\frac12}\times\bff g_{\ell,h}\right)\times\bff g_{\ell,h}
		}{
			\bff\theta_h^{j+1}-\bff\theta_h^j
		}_h
		\right|
		\right]
		\nonumber
		\\
		&\quad
		\le
		\delta
		\mathbb E
		\left[
		\sum_{j=0}^{J-1}
		\mathbf1_{\Omega_{\Lambda,j}}
		\norm{\bff\theta_h^{j+1}-\bff\theta_h^j}{h}^2
		\right]
		+
		C_\delta k .
		\label{eq:S4A-estimate-sllg}
	\end{align}
	Hence, thanks to the discrete BDG inequality
	(Lemma~\ref{lem:discrete-BDG} with $H=\mathbb R$ and $p=2$), we have
	\begin{align}
		&\mathbb E
		\left[
		\sup_{0\le r\le J}
		\left|
		\sum_{j=0}^{r-1}
		\mathbf1_{\Omega_{\Lambda,j}}
		\sum_{\ell=1}^{\infty}
		\left(|\hat{\Delta}_j\beta_\ell|^2-k\right)
		\inpro{
			\left(\bff m_h^j\times\bff g_{\ell,h}\right)\times\bff g_{\ell,h}
		}{
			\bff\theta_h^j
		}_h
		\right|
		\right]
		\nonumber
		\\
		&\quad
		\le
		C
		\left(
		\sum_{j=0}^{J-1}
		\mathbb E
		\left[
		\mathbf1_{\Omega_{\Lambda,j}}
		\left|
		\sum_{\ell=1}^{\infty}
		\left(|\hat{\Delta}_j\beta_\ell|^2-k\right)
		\inpro{
			\left(\bff m_h^j\times\bff g_{\ell,h}\right)\times\bff g_{\ell,h}
		}{
			\bff\theta_h^j
		}_h
		\right|^2
		\right]
		\right)^{\frac12}
		\nonumber
		\\
		&\quad \leq
		C \left( \sum_{j=0}^{J-1}
		\bb{E} \left[ \mathbf1_{\Omega_{\Lambda,j}} \norm{\bff\theta_h^j}{h}^2
		\left( \sum_{\ell=1}^{\infty}
		\left||\hat{\Delta}_j\beta_\ell|^2-k\right|
		\norm{\bff g_{\ell,h}}{\bb L^\infty}^2 \right)^2 \right] \right)^{\frac12}
		\nonumber
		\\
		&\quad=
		C \left( \sum_{j=0}^{J-1}
		\bb{E} \left[ \mathbf1_{\Omega_{\Lambda,j}} \norm{\bff\theta_h^j}{h}^2 \right] 
		\bb{E} \left[
		\left( \sum_{\ell=1}^{\infty}
		\left||\hat{\Delta}_j\beta_\ell|^2-k\right|
		\norm{\bff g_{\ell,h}}{\bb L^\infty}^2 \right)^2 \right] \right)^{\frac12},
		\label{eq:S4B-bdg-sllg}
	\end{align}
	where in the penultimate step we used the Cauchy--Schwarz inequality, \eqref{eq:Linfty-stability-sllg}, and \eqref{eq:noise-W1inf-l1-assumption-sllg}, while in the last step we used the fact that $\mathbf1_{\Omega_{\Lambda,j}}\norm{\bff\theta_h^j}{h}^2$
	is $\mathcal F_{t_j}$-measurable and the Brownian increments over
	$[t_j,t_{j+1}]$ are independent of $\mathcal F_{t_j}$.
	Therefore, continuing from \eqref{eq:S4B-bdg-sllg}, we obtain by Young's inequality and \eqref{eq:Delta-beta-g} that
	\begin{align}
		&\mathbb E
		\left[
		\sup_{0\le r\le J}
		\left|
		\sum_{j=0}^{r-1}
		\mathbf1_{\Omega_{\Lambda,j}}
		\sum_{\ell=1}^{\infty}
		\left(|\hat{\Delta}_j\beta_\ell|^2-k\right)
		\inpro{
			\left(\bff m_h^j\times\bff g_{\ell,h}\right)\times\bff g_{\ell,h}
		}{
			\bff\theta_h^j
		}_h
		\right|
		\right]
		\nonumber \\
		&\quad
		\le
		\eta
		\mathbb E
		\left[
		\max_{0\le j\le J}
		\mathbf1_{\Omega_{\Lambda,j}}
		\norm{\bff\theta_h^j}{h}^2
		\right]
		+
		C_\eta k .
		\label{eq:S4B-estimate-sllg}
	\end{align}
	It remains to estimate the part containing
	$\bff m_h^{j+1}-\bff m_h^j$. By
	\eqref{eq:double-cross-noise-product-bound-sllg} and Young's inequality,
	\begin{align}
		&\mathbb E
		\left[
		\sum_{j=0}^{J-1}
		\mathbf1_{\Omega_{\Lambda,j}}
		\left|
		\sum_{\ell=1}^{\infty}
		\left(|\hat{\Delta}_j\beta_\ell|^2-k\right)
		\inpro{
			\left((\bff m_h^{j+1}-\bff m_h^j)\times\bff g_{\ell,h}\right)\times\bff g_{\ell,h}
		}{
			\bff\theta_h^j
		}_h
		\right|
		\right]
		\nonumber
		\\
		&\quad
		\le
		C
		\sum_{j=0}^{J-1}
		\mathbb E
		\left[
		\mathbf1_{\Omega_{\Lambda,j}}
		\left(
		\sum_{\ell=1}^{\infty}
		\left||\hat{\Delta}_j\beta_\ell|^2-k\right|
		\norm{\bff g_{\ell,h}}{\bb L^\infty}^2
		\right)
		\norm{\bff m_h^{j+1}-\bff m_h^j}{h}
		\norm{\bff\theta_h^j}{h}
		\right]
		\nonumber
		\\
		&\quad
		\le
		C
		\sum_{j=0}^{J-1}
		\mathbb E
		\left[
		\left(
		\sum_{\ell=1}^{\infty}
		\left||\hat{\Delta}_j\beta_\ell|^2-k\right|
		\norm{\bff g_{\ell,h}}{\bb L^\infty}^2
		\right)
		\norm{\bff m_h^{j+1}-\bff m_h^j}{h}^2
		\right]
		\nonumber
		\\
		&\qquad
		+
		C
		\sum_{j=0}^{J-1}
		\mathbb E
		\left[
		\mathbf1_{\Omega_{\Lambda,j}}
		\left(
		\sum_{\ell=1}^{\infty}
		\left||\hat{\Delta}_j\beta_\ell|^2-k\right|
		\norm{\bff g_{\ell,h}}{\bb L^\infty}^2
		\right)
		\norm{\bff\theta_h^j}{h}^2
		\right].
		\label{eq:S4C-young-sllg}
	\end{align}
	For the first term on the right-hand side, Hölder's inequality and \eqref{eq:Delta-beta-g} yield
	\begin{align}
		\sum_{j=0}^{J-1}
		\mathbb E
		\left[
		\left(
		\sum_{\ell=1}^{\infty}
		\left||\hat{\Delta}_j\beta_\ell|^2-k\right|
		\norm{\bff g_{\ell,h}}{\bb L^\infty}^2
		\right)
		\norm{\bff m_h^{j+1}-\bff m_h^j}{h}^2
		\right]
		&
		\le
		Ck
		\sum_{j=0}^{J-1}
		\left(
		\mathbb E
		\left[
		\norm{\bff m_h^{j+1}-\bff m_h^j}{h}^4
		\right]
		\right)^{\frac12}
		\nonumber
		\\
		&
		\le
		Ck,
		\label{eq:S4C-increment-part-sllg}
	\end{align}
	where in the final step we also used the increment moment bound \eqref{eq:mh-increment-fourth-root-sllg}.
	For the last term on the right-hand side of
	\eqref{eq:S4C-young-sllg}, the factor involving the Brownian increment is
	independent of $\mathcal F_{t_j}$ and has expectation bounded by $Ck$.
	Therefore,
	\begin{align}
		&\sum_{j=0}^{J-1}
		\mathbb E
		\left[
		\mathbf1_{\Omega_{\Lambda,j}}
		\left(
		\sum_{\ell=1}^{\infty}
		\left||\hat{\Delta}_j\beta_\ell|^2-k\right|
		\norm{\bff g_{\ell,h}}{\bb L^\infty}^2
		\right)
		\norm{\bff\theta_h^j}{h}^2
		\right]
		\le
		Ck
		\sum_{j=0}^{J-1}
		\mathbb E
		\left[
		\mathbf1_{\Omega_{\Lambda,j}}
		\norm{\bff\theta_h^j}{h}^2
		\right].
		\label{eq:S4C-theta-part-sllg}
	\end{align}
	Combining \eqref{eq:S4A-estimate-sllg}, \eqref{eq:S4B-estimate-sllg}, \eqref{eq:S4C-young-sllg}, \eqref{eq:S4C-increment-part-sllg}, and
	\eqref{eq:S4C-theta-part-sllg}, and absorbing harmless numerical constants into
	$\delta$ and $\eta$, we obtain
	\begin{align}
		\mathbb E
		\left[
		\sup_{0\le r\le J}
		|\mathcal S_r^{(4)}|
		\right]
		&\le
		\delta
		\mathbb E
		\left[
		\sum_{j=0}^{J-1}
		\mathbf1_{\Omega_{\Lambda,j}}
		\norm{\bff\theta_h^{j+1}-\bff\theta_h^j}{h}^2
		\right]
		+
		\eta
		\mathbb E
		\left[
		\sup_{0\le j\le J}
		\mathbf1_{\Omega_{\Lambda,j}}
		\norm{\bff\theta_h^j}{h}^2
		\right]
		\nonumber
		\\
		&\qquad
		+
		C_{\delta,\eta}k
		+
		Ck
		\sum_{j=0}^{J-1}
		\mathbb E
		\left[
		\mathbf1_{\Omega_{\Lambda,j}}
		\norm{\bff\theta_h^{j+1}}{h}^2
		\right].
		\label{eq:S4-estimate-sllg}
	\end{align}
	
	We estimate $\mathcal S_r^{(5)}$ next. By the mass-lumped Cauchy--Schwarz
	inequality and the nodal bound \eqref{eq:nodal-constraint-sllg}, we first have
	\begin{align}
		|\mathcal S_r^{(5)}|
		&\le
		Ck
		\sum_{j=0}^{r-1}
		\mathbf1_{\Omega_{\Lambda,j}}
		\norm{
			\bff m_h^{j+\frac12}\times\Delta_h\bff m_h^{j+1}
		}{h}
		\norm{\hat{\Delta}_jW_h}{\bb L^\infty}
		\norm{\bff\theta_h^{j+1}}{h}.
		\label{eq:S5-start-sllg}
	\end{align}
	Now, let $\rho>0$. Young's inequality with conjugate exponents
	$(2+\rho)/(1+\rho)$ and $2+\rho$, together with the uniform bound
	$\norm{\bff\theta_h^{j+1}}{h}\le C$, gives
	\begin{align}
		|\mathcal S_r^{(5)}|
		&\le
		C_\rho k
		\sum_{j=0}^{J-1}
		\norm{
			\bff m_h^{j+\frac12}\times\Delta_h\bff m_h^{j+1}
		}{h}^{\frac{2+\rho}{1+\rho}}
		\norm{\hat{\Delta}_jW_h}{\bb L^\infty}^{\frac{2+\rho}{1+\rho}}
		+
		C_\rho k
		\sum_{j=0}^{J-1}
		\mathbf1_{\Omega_{\Lambda,j}}
		\norm{\bff\theta_h^{j+1}}{h}^{2}.
		\label{eq:S5-young-sllg}
	\end{align}
	By H\"older's inequality and
	\eqref{eq:noise-increment-moment-sllg},
	\begin{align*}
		&k\sum_{j=0}^{J-1}\mathbb E\left[
		\norm{
			\bff m_h^{j+\frac12}\times\Delta_h\bff m_h^{j+1}
		}{h}^{\frac{2+\rho}{1+\rho}}
		\norm{\hat{\Delta}_jW_h}{\bb L^\infty}^{\frac{2+\rho}{1+\rho}}
		\right]
		\le
		C_\rho k^{1+\frac{2+\rho}{2+2\rho}}
		\sum_{j=0}^{J-1}
		\left(
		\mathbb E\left[
		\norm{
			\bff m_h^{j+\frac12}\times\Delta_h\bff m_h^{j+1}
		}{h}^{2}
		\right]
		\right)^{\frac{2+\rho}{2+2\rho}}.
	\end{align*}
	Indeed, the moment of the Wiener increment used here is of order
	$2+4/\rho$. Since $\frac{2+\rho}{2+2\rho} \in (0,1)$, the discrete Jensen
	inequality (Lemma~\ref{lem:weighted-discrete-holder}) and Proposition~\ref{prop:p-moment-stability-sllg} yield
	\begin{align*}
		&\sum_{j=0}^{J-1}
		\left(
		\mathbb E\left[
		\norm{
			\bff m_h^{j+\frac12}\times\Delta_h\bff m_h^{j+1}
		}{h}^{2}
		\right]
		\right)^{\frac{2+\rho}{2+2\rho}}
		\le
		J^{\frac{\rho}{2+2\rho}}
		\left(
		\sum_{j=0}^{J-1}
		\mathbb E\left[
		\norm{
			\bff m_h^{j+\frac12}\times\Delta_h\bff m_h^{j+1}
		}{h}^{2}
		\right]
		\right)^{\frac{2+\rho}{2+2\rho}}
		\le C_\rho k^{-1}.
	\end{align*}
	This estimate, together with \eqref{eq:S5-young-sllg}, implies
	\begin{align}
		\mathbb E\left[
		\sup_{0\le r\le J}|\mathcal S_r^{(5)}|
		\right]
		&\le
		C_\rho k^{\frac{2+\rho}{2+2\rho}}
		+C_\rho k
		\sum_{j=0}^{J-1}
		\mathbb E\left[
		\mathbf1_{\Omega_{\Lambda,j}}
		\norm{\bff\theta_h^{j+1}}{h}^{2}
		\right]
		\nonumber\\
		&\le
		C_\varepsilon k^{1-\varepsilon}
		+C_\varepsilon k
		\sum_{j=0}^{J-1}
		\mathbb E\left[
		\mathbf1_{\Omega_{\Lambda,j}}
		\norm{\bff\theta_h^{j+1}}{h}^{2}
		\right],
		\label{eq:S5-estimate-sllg}
	\end{align}
	where $\rho>0$ is chosen so that
	$\rho/(2+2\rho)\le\varepsilon$.
	
	Finally, we estimate $\mathcal S_r^{(6)}$. To avoid repeatedly
	writing the same off-diagonal coefficient, define
	\[
	\mathfrak X_j
	:=
	\sum_{\ell=1}^{\infty}
	\sum_{\substack{q=1\\q\ne\ell}}^{\infty}
	|\hat{\Delta}_j\beta_\ell\hat{\Delta}_j\beta_q|
	\norm{\bff g_{\ell,h}}{\bb L^\infty}
	\norm{\bff g_{q,h}}{\bb L^\infty}.
	\]
	By Gaussian moments and independence of Brownian increments, Minkowski's inequality in $L^2(\Omega)$, as well as assumption~\eqref{eq:noise-W1inf-l1-assumption-sllg}, we have
	\begin{equation}\label{eq:off-diagonal-increment-moments-sllg}
		\mathbb E[\mathfrak X_j]\le Ck,
		\qquad
		\mathbb E[\mathfrak X_j^2]\le Ck^2.
	\end{equation}
	By writing
	\[
	\bff m_h^{j+\frac12}
	=\bff m_h^j
	+\frac12(\bff m_h^{j+1}-\bff m_h^j) 
	\;\text{ and }\; 
	\bff\theta_h^{j+1}
	=\bff\theta_h^j
	+(\bff\theta_h^{j+1}-\bff\theta_h^j),
	\]
	we first estimate the part paired with the error increment $\bff\theta_h^{j+1}-\bff\theta_h^j$. The nodal
	bound \eqref{eq:nodal-constraint-sllg} and Young's inequality imply
	\begin{align*}
		&\mathbb E\left[
		\sup_{0\le r\le J}
		\left|
		\sum_{j=0}^{r-1}\mathbf1_{\Omega_{\Lambda,j}}
		\sum_{\substack{\ell,q\ge1\\q\ne\ell}}
		\hat{\Delta}_j\beta_\ell\hat{\Delta}_j\beta_q
		\inpro{
			(\bff m_h^{j+\frac12}\times\bff g_{q,h})
			\times\bff g_{\ell,h}
		}{
			\bff\theta_h^{j+1}-\bff\theta_h^j
		}_h
		\right|
		\right]
		\nonumber\\
		&\qquad\le
		\delta\mathbb E\left[
		\sum_{j=0}^{J-1}\mathbf1_{\Omega_{\Lambda,j}}
		\norm{\bff\theta_h^{j+1}-\bff\theta_h^j}{h}^2
		\right]
		+C_\delta k.
	\end{align*}
	The leading part paired with $\bff\theta_h^j$ is a martingale because,
	for $q\ne\ell$, we have $\mathbb E\left[
	\hat{\Delta}_j\beta_\ell\hat{\Delta}_j\beta_q
	\mid\mathcal F_{t_j}
	\right]=0$.
	Moreover, for every scalar array
	$\{c_{\ell q}\}_{\ell\ne q}$,
	\[
	\begin{aligned}
		&\mathbb E\left[
		\left|
		\sum_{\substack{\ell,q\ge1\\q\ne\ell}}
		c_{\ell q}
		\hat{\Delta}_j\beta_\ell
		\hat{\Delta}_j\beta_q
		\right|^2
		\right]
		\le
		2k^2
		\left(
		\sum_{\substack{\ell,q\ge1\\q\ne\ell}}
		|c_{\ell q}|
		\right)^2.
	\end{aligned}
	\]
	Applying this conditionally on $\mathcal F_{t_j}$, followed by the
	discrete BDG and Young inequalities, yields
	\begin{align*}
		&\mathbb E\left[
		\sup_{0\le r\le J}
		\left|
		\sum_{j=0}^{r-1}
		\mathbf1_{\Omega_{\Lambda,j}}
		\sum_{\substack{\ell,q\ge1\\q\ne\ell}}
		\hat{\Delta}_j\beta_\ell
		\hat{\Delta}_j\beta_q
		\inpro{
			(\bff m_h^j\times\bff g_{q,h})
			\times\bff g_{\ell,h}
		}{
			\bff\theta_h^j
		}_h
		\right|
		\right]
		\le
		\eta\mathbb E\left[
		\sup_{0\le j\le J}
		\mathbf1_{\Omega_{\Lambda,j}}
		\norm{\bff\theta_h^j}{h}^2
		\right]
		+C_\eta k.
	\end{align*}
	It remains to estimate the contribution arising from
	$\frac12(\bff m_h^{j+1}-\bff m_h^j)$ paired with $\bff\theta_h^j$. Using the
	definition of $\mathfrak X_j$ and the mass-lumped Cauchy--Schwarz
	inequality, we obtain
	\begin{align*}
		&\mathbb E\left[
		\sup_{0\le r\le J}
		\left|
		\sum_{j=0}^{r-1}
		\mathbf1_{\Omega_{\Lambda,j}}
		\sum_{\substack{\ell,q\ge1\\q\ne\ell}}
		\hat{\Delta}_j\beta_\ell\hat{\Delta}_j\beta_q
		\inpro{
			\left(
			(\bff m_h^{j+1}-\bff m_h^j)\times\bff g_{q,h}
			\right)\times\bff g_{\ell,h}
		}{
			\bff\theta_h^j
		}_h
		\right|
		\right]
		\nonumber\\
		&\qquad
		\le
		C\sum_{j=0}^{J-1}
		\mathbb E\left[
		\mathbf1_{\Omega_{\Lambda,j}}
		\mathfrak X_j
		\norm{\bff m_h^{j+1}-\bff m_h^j}{h}
		\norm{\bff\theta_h^j}{h}
		\right]
		\\
		&\qquad
		\le
		C\sum_{j=0}^{J-1}
		\mathbb E\left[
		\mathfrak X_j
		\norm{\bff m_h^{j+1}-\bff m_h^j}{h}^2
		\right]
		+
		C\sum_{j=0}^{J-1}
		\mathbb E\left[
		\mathbf1_{\Omega_{\Lambda,j}}
		\mathfrak X_j
		\norm{\bff\theta_h^j}{h}^2
		\right].
	\end{align*}
	
	For the first term on the right-hand side, applying the Cauchy--Schwarz inequality, \eqref{eq:off-diagonal-increment-moments-sllg}, and
	\eqref{eq:mh-increment-fourth-root-sllg} give
	\begin{align*}
		\sum_{j=0}^{J-1}
		\mathbb E\left[
		\mathfrak X_j
		\norm{\bff m_h^{j+1}-\bff m_h^j}{h}^2
		\right]
		&\le
		\sum_{j=0}^{J-1}
		\left(
		\mathbb E[\mathfrak X_j^2]
		\right)^{\frac12}
		\left(
		\mathbb E\left[
		\norm{\bff m_h^{j+1}-\bff m_h^j}{h}^4
		\right]
		\right)^{\frac12}
		\nonumber\\
		&\le
		Ck
		\sum_{j=0}^{J-1}
		\left(
		\mathbb E\left[
		\norm{\bff m_h^{j+1}-\bff m_h^j}{h}^4
		\right]
		\right)^{\frac12}
		\le
		Ck.
	\end{align*}
	
	For the second term, both
	$\mathbf1_{\Omega_{\Lambda,j}}$ and $\bff\theta_h^j$ are
	$\mathcal F_{t_j}$-measurable, whereas $\mathfrak X_j$ depends only
	on the Brownian increments over $[t_j,t_{j+1}]$ and is therefore
	independent of $\mathcal F_{t_j}$. Hence, again by
	\eqref{eq:off-diagonal-increment-moments-sllg},
	\begin{align*}
		\sum_{j=0}^{J-1}
		\mathbb E\left[
		\mathbf1_{\Omega_{\Lambda,j}}
		\mathfrak X_j
		\norm{\bff\theta_h^j}{h}^2
		\right]
		&=
		\sum_{j=0}^{J-1}
		\mathbb E[\mathfrak X_j]\,
		\mathbb E\left[
		\mathbf1_{\Omega_{\Lambda,j}}
		\norm{\bff\theta_h^j}{h}^2
		\right]
		\le
		Ck\sum_{j=0}^{J-1}
		\mathbb E\left[
		\mathbf1_{\Omega_{\Lambda,j}}
		\norm{\bff\theta_h^{j+1}}{h}^2
		\right],
	\end{align*}
	where the last step follows from the fact that $\bff\theta_h^0=\bff0$ and
	$\Omega_{\Lambda,j}\subset\Omega_{\Lambda,j-1}$ for $j\ge 1$.
	We now combine the above estimates to obtain
	\begin{align}
		\mathbb E\left[
		\sup_{0\le r\le J}|\mathcal S_r^{(6)}|
		\right]
		&\le
		\delta\mathbb E\left[
		\sum_{j=0}^{J-1}
		\mathbf1_{\Omega_{\Lambda,j}}
		\norm{\bff\theta_h^{j+1}-\bff\theta_h^j}{h}^2
		\right]
		+
		\eta\mathbb E\left[
		\sup_{0\le j\le J}
		\mathbf1_{\Omega_{\Lambda,j}}
		\norm{\bff\theta_h^j}{h}^2
		\right]
		+
		C_{\delta,\eta,\varepsilon}k^{1-\varepsilon}
		\nonumber\\
		&\quad+
		Ck\sum_{j=0}^{J-1}
		\mathbb E\left[
		\mathbf1_{\Omega_{\Lambda,j}}
		\norm{\bff\theta_h^{j+1}}{h}^2
		\right].
		\label{eq:S6-estimate-sllg}
	\end{align}
	
	Combining \eqref{eq:S1-estimate-sllg}, \eqref{eq:S2-estimate-sllg},
	\eqref{eq:S3-estimate-sllg}, \eqref{eq:S4-estimate-sllg},
	\eqref{eq:S5-estimate-sllg}, and \eqref{eq:S6-estimate-sllg} completes the proof of \eqref{eq:stochastic-error-term-estimate-sllg}.
\end{proof}

For $\bff a_h,\bff w_h,\bff\phi_h\in\bb V_h$, define the discrete
Landau--Lifshitz drift form
\begin{equation}\label{eq:discrete-drift-form-sllg}
		\mathscr A_h(\bff a_h;\bff w_h,\bff\phi_h)
		\coloneq
		\alpha\kappa
		\inpro{
			\bff a_h\times
			\left(\bff a_h\times\Delta_h\bff w_h\right)
		}{\bff\phi_h}_h
		-\kappa
		\inpro{
			\bff a_h\times\Delta_h\bff w_h
		}{\bff\phi_h}_h.
\end{equation}
We are now in a position to prove our main theorem.

\begin{theorem}\label{thm:localised-error-sllg}
	Let $\alpha,\kappa>0$ and $\gamma\in(0,\frac12)$. Let $\bff m$ be the
	pathwise strong solution of \eqref{eq:sllg-model}, and let
	$\{\bff m_h^j\}_{j=0}^J$ be an adapted solution of
	\eqref{eq:fully-discrete-midpoint-sllg-laplacian}. For every fixed $\Lambda>0$, there exist $k_\Lambda\in (0,1]$ and $C_{\Lambda,\gamma}>0$, independent of $h$ and $k$, such that, whenever $0<k\le k_\Lambda$,
	\begin{align}
		&\mathbb E\left[
		\mathbf1_{\Omega_{\Lambda,J}}
		\max_{0\le j\le J}\norm{\bff\theta_h^j}{h}^2
		\right]
		+\mathbb E\left[
		\mathbf1_{\Omega_{\Lambda,J}}
		\sum_{j=0}^{J-1}
		\norm{\bff\theta_h^{j+1}-\bff\theta_h^j}{h}^2
		\right]
		+
		\mathbb E\left[
		\mathbf1_{\Omega_{\Lambda,J}}
		k\sum_{j=0}^{J-1}
		\norm{\partial_x\bff\theta_h^{j+1}}{\bb L^2}^2
		\right]
		\le
		C_{\Lambda,\gamma}\left(h^2+k^{2\gamma}\right).
		\label{eq:theta-error-local-sllg}
	\end{align}
	Consequently,
	\begin{align}
		&\mathbb E\left[
		\mathbf1_{\Omega_{\Lambda,J}}
		\max_{0\le j\le J}
		\norm{\bff m^j-\bff m_h^j}{\bb L^2}^2
		\right]
		+
		\mathbb E\left[
		\mathbf1_{\Omega_{\Lambda,J}}
		k\sum_{j=0}^{J-1}
		\norm{
			\partial_x(\bff m^{j+1}-\bff m_h^{j+1})
		}{\bb L^2}^2
		\right]
		\le
		C_{\Lambda,\gamma}\left(h^2+k^{2\gamma}\right).
		\label{eq:true-error-local-sllg}
	\end{align}
\end{theorem}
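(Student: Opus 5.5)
Subtract the scheme \eqref{eq:fully-discrete-midpoint-sllg-laplacian} from the projected exact equation \eqref{eq:projected-exact-discrete-equation-sllg} to obtain the error equation for $\bff\theta_h^{j+1}$:
\[
\inpro{\bff\theta_h^{j+1}-\bff\theta_h^j}{\bff\phi_h}_h
+\mathscr A_h(I_h\overline{\bff m}^{j+\frac12};I_h\bff m^{j+1},\bff\phi_h)
-\mathscr A_h(\bff m_h^{j+\frac12};\bff m_h^{j+1},\bff\phi_h)
=\mathcal R_h^{j+1}(\bff\phi_h)+(\text{stochastic difference})(\bff\phi_h).
\]
Test with $\bff\phi_h=\bff\theta_h^{j+1}$ and multiply by $\mathbf1_{\Omega_{\Lambda,j}}$, which is $\mathcal F_{t_j}$-measurable. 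The identity $2(a-b)\cdot a=|a|^2-|b|^2+|a-b|^2$ produces the increments $\norm{\bff\theta_h^{j+1}}{h}^2-\norm{\bff\theta_h^j}{h}^2+\norm{\bff\theta_h^{j+1}-\bff\theta_h^j}{h}^2$. Since $\mathbf1_{\Omega_{\Lambda,j+1}}\le\mathbf1_{\Omega_{\Lambda,j}}$, these telescope after multiplying by the indicator.

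The drift difference is split by adding and subtracting. One part is $\mathscr A_h(\bff m_h^{j+\frac12};\bff\theta_h^{j+1},\bff\theta_h^{j+1})$, which should supply the coercive term:
\begin{itemize}
\item The damping piece equals $-\alpha\kappa\norm{\bff m_h^{j+\frac12}\times\Delta_h\bff\theta_h^{j+1}}{h}^2$-type expressions. This is not directly $\norm{\partial_x\bff\theta_h^{j+1}}{\bb L^2}^2$.
\item The cleaner route is to exploit the LLG Gilbert/Landau equivalence at the discrete level: write $\bff a\times(\bff a\times\bff v)=(\bff a\cdot\bff v)\bff a-|\bff a|^2\bff v$.
\item At nodes, $|\bff m_h^{j+\frac12}|^2=1-\frac14|\bff m_h^{j+1}-\bff m_h^j|^2$. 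Hence the damping form gives $\alpha\kappa\norm{\partial_x\bff\theta_h^{j+1}}{\bb L^2}^2$ up to lower-order defects.
\item These defects are controlled by $\norm{\bff m_h^{j+1}-\bff m_h^j}{\bb L^\infty}$, by $\bff a_h\cdot\bff\theta_h^{j+1}$, and by $|I_h\bff m|=1$ at nodes.
\item The precession piece $\kappa\inpro{\bff a_h\times\Delta_h\bff\theta}{\bff\theta}_h$ vanishes, since the cross product is orthogonal nodewise.
\end{itemize}
The remaining parts involve $I_h\overline{\bff m}^{j+\frac12}-\bff m_h^{j+\frac12}=\frac12(\bff\theta_h^{j+1}+\bff\theta_h^j)+\text{(interp./time defects)}$ multiplied by $\Delta_hI_h\bff m^{j+1}$ or $\Delta_h\bff m_h^{j+1}$. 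For these, rewrite via \eqref{eq:def-discrete-laplacian-sllg} as $\partial_x$-pairings and use \eqref{eq:Ih-product-H1-sllg} together with \eqref{eq:discrete-cross-cancellation-sllg} and the discrete Agmon inequality \eqref{eq:discrete-agmon-sllg}. They are then bounded by
\[
\tfrac{\alpha\kappa}{4}\norm{\partial_x\bff\theta_h^{j+1}}{\bb L^2}^2
+C_\Lambda\bigl(\norm{\bff\theta_h^{j+1}}{h}^2+\norm{\bff\theta_h^j}{h}^2\bigr)
+C_\Lambda\norm{\bff m_h^{j+1}-\bff m_h^j}{h}^2\,(\dots).
\]
On $\Omega_{\Lambda,j}$ we have $\norm{\partial_x\bff m_h^j}{\bb L^2}$ and $\norm{\bff m}{\bb H^2}$ bounded by $\Lambda^{1/4}$. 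However, $\partial_x\bff m_h^{j+1}$ is not localised at level $j$. This is handled by pairing with $\norm{\bff m_h^{j+\frac12}\times\Delta_h\bff m_h^{j+1}}{h}$ from Proposition~\ref{prop:p-moment-stability-sllg}, or by writing $\bff m_h^{j+1}=\bff m_h^j+$increment.

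The residual term is bounded by $\frac1{2k}$-weighted Young's inequality:
\[
|\mathcal R_h^{j+1}(\bff\theta_h^{j+1})|\le \tfrac1k\norm{\mathcal R_h^{j+1}}{-1,h}^2+k\norm{\bff\theta_h^{j+1}}{1,h}^2/4.
\]
Lemma~\ref{lem:consistency-sllg} then gives $C(h^2+k^{2\gamma})$. The stochastic term is exactly $\mathcal S_r$, so Lemma~\ref{lem:stochastic-error-term-sllg} applies with $\varepsilon=1-2\gamma$, giving $k^{1-\varepsilon}=k^{2\gamma}$. Choose $\delta$ small to absorb into the $\norm{\bff\theta_h^{j+1}-\bff\theta_h^j}{h}^2$ term, and $\eta$ small to absorb into the $\max_j$ term.

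Next sum over $j$, take $\sup_r$, and then expectations. The terms $Ck\sum_j\mathbb E[\mathbf1_{\Omega_{\Lambda,j}}\norm{\bff\theta_h^{j+1}}{h}^2]$ contain the implicit index $j+1$. They are absorbed for $k\le k_\Lambda$, with $C_\Lambda k\le\frac12$. A discrete Gronwall argument then gives \eqref{eq:theta-error-local-sllg}, using $\mathbf1_{\Omega_{\Lambda,J}}\le\mathbf1_{\Omega_{\Lambda,j}}$ and $\bff\theta_h^0=\bff0$.

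Finally, \eqref{eq:true-error-local-sllg} follows from the splitting $\bff m^j-\bff m_h^j=\bff\rho_h^j+\bff\theta_h^j$, together with \eqref{eq:rho-interpolation-error-sllg}, \eqref{eq:lumped-L2-equivalence-sllg}, and \eqref{eq:exact-solution-regularity-sllg}.

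The main obstacle is the drift-difference estimate, specifically obtaining coercivity in $\norm{\partial_x\bff\theta_h^{j+1}}{\bb L^2}$ from the midpoint-weighted damping form. The cross terms involving $\Delta_h\bff m_h^{j+1}$ must be controlled without a uniform bound on the discrete Laplacian. The plan is to express them via $\bff m_h^{j+\frac12}\times\Delta_h\bff m_h^{j+1}$ using the nodal identity for $\bff m_h^{j+\frac12}$. Their contribution, which is $k\times$ an $L^1$-in-time quantity, is then handled by a stopping-type argument, with Hölder in time and the moment bounds, at the cost of possibly requiring small $k$.
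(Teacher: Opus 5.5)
\paragraph{Gap: coercivity of the drift difference.}
Your skeleton matches the paper's proof. That is: the error equation, testing with $\mathbf1_{\Omega_{\Lambda,j}}\bff\theta_h^{j+1}$, dual-norm Young for $\mathcal R_h^{j+1}$, Lemma~\ref{lem:stochastic-error-term-sllg} with $\varepsilon=1-2\gamma$, stopped telescoping, absorption for $k\le k_\Lambda$, Gronwall, and the $\bff\rho_h/\bff\theta_h$ splitting. However, the step that carries the theorem, coercivity of the drift difference, is not established by your sketch.

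First, the precession piece $\kappa\inpro{\bff a_h\times\Delta_h\bff\theta_h^{j+1}}{\bff\theta_h^{j+1}}_h$ does not vanish. Nodewise, $\bff a_h\times\Delta_h\bff\theta_h^{j+1}$ is orthogonal to $\bff a_h$ and $\Delta_h\bff\theta_h^{j+1}$, not to $\bff\theta_h^{j+1}$. Up to sign it equals $\inpro{\partial_xI_h(\bff\theta_h^{j+1}\times\bff a_h)}{\partial_x\bff\theta_h^{j+1}}$, which is only lower order by \eqref{eq:discrete-cross-cancellation-sllg}.

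More seriously, your split puts $\Delta_h$ on $\bff\theta_h^{j+1}$ with $\bff a_h=\bff m_h^{j+\frac12}$. Set $\bff d:=\bff m_h^{j+1}-\bff m_h^j$. The nodal relations $|\bff a_h|^2=1-\frac14|\bff d|^2$ and $\bff a_h\cdot\bff\theta_h^{j+1}=-\frac12|\bff\theta_h^{j+1}|^2-\frac12\bff d\cdot\bff\theta_h^{j+1}$ then give, after discrete integration by parts, defects of size $\norm{\bff d}{\bb L^\infty}^2\norm{\partial_x\bff\theta_h^{j+1}}{\bb L^2}^2$, $\norm{\bff d}{\bb L^\infty}\norm{\partial_x\bff\theta_h^{j+1}}{\bb L^2}^2$ and $\norm{\bff\theta_h^{j+1}}{\bb L^\infty}\norm{\partial_x\bff\theta_h^{j+1}}{\bb L^2}^2$. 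These have the same order as the coercive term. Their coefficients are only $O(1)$ pathwise, since nodal unit length gives merely $|\bff d|,|\bff\theta_h^{j+1}|\le2$. So ``controlled by $\norm{\bff d}{\bb L^\infty}$'' does not let you absorb them.

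The missing idea is the one in Lemmas~\ref{lem:endpoint-drift-monotonicity-sllg}--\ref{lem:local-drift-summable-defect-sllg}: add and subtract the endpoint forms $\mathscr A_h(I_h\bff m^{j+1};I_h\bff m^{j+1},\cdot)$ and $\mathscr A_h(\bff m_h^{j+1};\bff m_h^{j+1},\cdot)$.
\begin{itemize}
\item Both first arguments have unit nodal length, and $\bff\theta_h^{j+1}\cdot I_h\bff m^{j+1}=\frac12|\bff\theta_h^{j+1}|^2=-\bff\theta_h^{j+1}\cdot\bff m_h^{j+1}$ at nodes.
\item Hence the damping difference is exactly $\norm{\partial_x\bff\theta_h^{j+1}}{\bb L^2}^2$ plus remainders in which $\Delta_h$ falls on $I_h\bff m^{j+1}$ or $\bff m_h^{j+1}$.
\item These remainders are linear in $\partial_x\bff\theta_h^{j+1}$, with factors $\norm{\partial_xI_h\bff m^{j+1}}{\bb L^2}$ and $\norm{\partial_x\bff m_h^{j+1}}{\bb L^2}$ that are bounded on $\Omega_{\Lambda,j}$ up to increments.
\item The midpoint-to-endpoint changes cost only absorbable terms plus powers of $H^1$-increments.
\end{itemize}
Your split could be rescued only by first bounding one factor $\norm{\partial_x\bff\theta_h^{j+1}}{\bb L^2}\le C_\Lambda+\norm{I_h(\bff m^{j+1}-\bff m^j)}{1,h}+\norm{\bff d}{1,h}$ on $\Omega_{\Lambda,j}$, before applying Young and Agmon. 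You do not do this.

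\paragraph{The ``main obstacle'' and the increment defects.}
Your ``main obstacle'' paragraph points in the wrong direction. Your own split contains no $\Delta_h\bff m_h^{j+1}$, only $\Delta_h\bff\theta_h^{j+1}$ and $\Delta_hI_h\bff m^{j+1}$. Moreover, no discrete-Laplacian bound is ever needed, because every $\Delta_h$ can be moved onto a gradient pairing via \eqref{eq:def-discrete-laplacian-sllg}. The remedy you propose, pairing with $\norm{\bff m_h^{j+\frac12}\times\Delta_h\bff m_h^{j+1}}{h}$ plus a stopping argument, would create random Gronwall weights. $\Omega_{\Lambda,j}$ does not control them, and the final deterministic Gronwall step cannot absorb them.

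What is actually needed, and absent, is the bookkeeping of the increment defects:
\begin{itemize}
\item they must be measured in $\norm{\cdot}{1,h}$, not $\norm{\cdot}{h}$;
\item they must include the exact increments $I_h(\bff m^{j+1}-\bff m^j)$, because $\Omega_{\Lambda,j}$ controls $\bff m$ only up to $t_j$;
\item one must show $\mathbb E[k\sum_j\sum_{\nu=1}^3(\norm{I_h(\bff m^{j+1}-\bff m^j)}{1,h}^{2\nu}+\norm{\bff d}{1,h}^{2\nu})]\le C_\gamma k^{2\gamma}$, as in \eqref{eq:Xi-increment-defect-sum-sllg}.
\end{itemize}
The last bound uses the $C^\gamma$-regularity of $\bff m$ and the moment bound on $\sum_j\norm{\bff m_h^{j+1}-\bff m_h^j}{1,h}^2$ from Lemma~\ref{lem:full-H1-increment-moments-sllg}. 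That lemma rests on the gradient-increment term in Proposition~\ref{prop:p-moment-stability-sllg}.
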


\begin{proof}
	Since $\bff m_h^0=I_h\bff m_0$, we have
	$\bff\theta_h^0=\bff0$. Subtracting
	\eqref{eq:fully-discrete-midpoint-sllg-laplacian} from
	\eqref{eq:projected-exact-discrete-equation-sllg} yields, for every
	$\bff\phi_h\in\bb V_h$,
	\begin{align}
		&\inpro{
			\bff\theta_h^{j+1}-\bff\theta_h^j
		}{\bff\phi_h}_h
		+k\Bigl[
		\mathscr A_h(
		I_h\overline{\bff m}^{j+\frac12};
		I_h\bff m^{j+1},\bff\phi_h
		)
		-
		\mathscr A_h(
		\bff m_h^{j+\frac12};
		\bff m_h^{j+1},\bff\phi_h
		)
		\Bigr]
		\nonumber\\
		&\quad-
		\inpro{
			\displaystyle
			\int_{t_j}^{t_{j+1}}
			I_h\bff m(s)\times\circ\dW_h(s)
			-\bff m_h^{j+\frac12}\times\hat{\Delta}_jW_h
		}{\bff\phi_h}_h
		=
		\mathcal R_h^{j+1}(\bff\phi_h),
		\label{eq:theta-error-equation-sllg}
	\end{align}
	where $\mathscr{A}_h$ is given by \eqref{eq:discrete-drift-form-sllg}.
	Now, choose $\bff\phi_h=\mathbf1_{\Omega_{\Lambda,j}}\bff\theta_h^{j+1}$.
	By Lemma~\ref{lem:local-drift-summable-defect-sllg}, we have
	\begin{align}
		&\mathbf1_{\Omega_{\Lambda,j}}
		\inpro{
			\bff\theta_h^{j+1}-\bff\theta_h^j
		}{\bff\theta_h^{j+1}}_h
		+c_0k\mathbf1_{\Omega_{\Lambda,j}}
		\norm{\partial_x\bff\theta_h^{j+1}}{\bb L^2}^2
		\nonumber\\
		&\quad\le
		C_\Lambda k\mathbf1_{\Omega_{\Lambda,j}}
		\norm{\bff\theta_h^{j+1}}{h}^2
		+
		C_\Lambda k
		\sum_{\nu=1}^{3}
		\left(
		\norm{I_h(\bff m^{j+1}-\bff m^j)}{1,h}^{2\nu}
		+\norm{\bff m_h^{j+1}-\bff m_h^j}{1,h}^{2\nu}
		\right)
		\nonumber\\
		&\qquad+
		\mathbf1_{\Omega_{\Lambda,j}}
		\mathcal R_h^{j+1}(\bff\theta_h^{j+1})
		+
		\mathbf1_{\Omega_{\Lambda,j}}
		\inpro{
			\displaystyle
			\int_{t_j}^{t_{j+1}}
			I_h\bff m(s)\times\circ\dW_h(s)
			-\bff m_h^{j+\frac12}\times\hat{\Delta}_jW_h
		}{\bff\theta_h^{j+1}}_h.
		\label{eq:error-after-corrected-drift-sllg}
	\end{align}
	By the definition of the discrete dual norm and Young's inequality,
	\begin{align*}
		&\mathbf1_{\Omega_{\Lambda,j}}
		\left|
		\mathcal R_h^{j+1}(\bff\theta_h^{j+1})
		\right|
		\le
		\frac{c_0}{4}k\mathbf1_{\Omega_{\Lambda,j}}
		\norm{\partial_x\bff\theta_h^{j+1}}{\bb L^2}^2
		+Ck\mathbf1_{\Omega_{\Lambda,j}}
		\norm{\bff\theta_h^{j+1}}{h}^2
		+\frac{C}{k}\norm{\mathcal R_h^{j+1}}{-1,h}^2.
	\end{align*}
	After absorbing the first term on the right and summing \eqref{eq:error-after-corrected-drift-sllg} from $j=0$ to
	$r-1$, we obtain
	\begin{align}
		&\sum_{j=0}^{r-1}
		\mathbf1_{\Omega_{\Lambda,j}}
		\inpro{
			\bff\theta_h^{j+1}-\bff\theta_h^j
		}{\bff\theta_h^{j+1}}_h
		+c_1k\sum_{j=0}^{r-1}
		\mathbf1_{\Omega_{\Lambda,j}}
		\norm{\partial_x\bff\theta_h^{j+1}}{\bb L^2}^2
		\nonumber\\
		&\quad\le
		C_\Lambda k\sum_{j=0}^{r-1}
		\mathbf1_{\Omega_{\Lambda,j}}
		\norm{\bff\theta_h^{j+1}}{h}^2
		+
		C_\Lambda k\sum_{j=0}^{r-1}
		\sum_{\nu=1}^{3}
		\left(
		\norm{I_h(\bff m^{j+1}-\bff m^j)}{1,h}^{2\nu}
		+\norm{\bff m_h^{j+1}-\bff m_h^j}{1,h}^{2\nu}
		\right)
		\nonumber\\
		&\quad \quad+
		C\sum_{j=0}^{r-1}
		\frac1k\norm{\mathcal R_h^{j+1}}{-1,h}^2
		+\mathcal S_r,
		\label{eq:error-summed-corrected-drift-sllg}
	\end{align}
	where $c_1>0$ and $\mathcal S_r$ is defined by
	\eqref{eq:stochastic-error-term-definition-sllg}.
	
	By Lemma~\ref{lem:stopped-time-identity-sllg}, noting the monotonicity in $r$ of the gradient sum, we obtain for every $1\le n\le J$,
	\begin{align}
		&\max_{1\le r\le n}
		\Bigg[
		\sum_{j=0}^{r-1}
		\mathbf1_{\Omega_{\Lambda,j}}
		\inpro{
			\bff\theta_h^{j+1}-\bff\theta_h^j
		}{\bff\theta_h^{j+1}}_h
		+
		c_1k\sum_{j=0}^{r-1}
		\mathbf1_{\Omega_{\Lambda,j}}
		\norm{\partial_x\bff\theta_h^{j+1}}{\bb L^2}^2
		\Bigg]
		\nonumber\\
		&\qquad\ge
		c\max_{1\le r\le n}
		\mathbf1_{\Omega_{\Lambda,r-1}}
		\norm{\bff\theta_h^r}{h}^2
		+
		c\sum_{j=0}^{n-1}
		\mathbf1_{\Omega_{\Lambda,j}}
		\norm{\bff\theta_h^{j+1}-\bff\theta_h^j}{h}^2
		+
		ck\sum_{j=0}^{n-1}
		\mathbf1_{\Omega_{\Lambda,j}}
		\norm{\partial_x\bff\theta_h^{j+1}}{\bb L^2}^2.
		\label{eq:stopped-coercivity-with-gradient-sllg}
	\end{align}
	We apply \eqref{eq:stopped-coercivity-with-gradient-sllg} to
	\eqref{eq:error-summed-corrected-drift-sllg}, take the maximum over
	$r=1,\ldots,n$, and then take expectations. We also aim to apply \eqref{eq:stochastic-error-term-estimate-sllg} with
	$\varepsilon=1-2\gamma$ to estimate the last term in \eqref{eq:error-summed-corrected-drift-sllg}.
	Since $\bff\theta_h^0=\bff0$ and
	$\Omega_{\Lambda,j}\subset\Omega_{\Lambda,j-1}$ for $j\ge 1$, we have
	\[
	\sup_{0\le j\le n}
	\mathbf1_{\Omega_{\Lambda,j}}
	\norm{\bff\theta_h^j}{h}^2
	\le
	\max_{1\le r\le n}
	\mathbf1_{\Omega_{\Lambda,r-1}}
	\norm{\bff\theta_h^r}{h}^2.
	\]
	Therefore, the terms multiplied by $\delta$ and $\eta$ in
	\eqref{eq:stochastic-error-term-estimate-sllg} can be absorbed into,
	respectively, the discrete time-increment term and the stopped maximum
	on the left-hand side by choosing $\delta,\eta>0$ sufficiently small.
	We then infer that
	\begin{align}
		&\mathbb E\left[
		\max_{1\le r\le n}
		\mathbf1_{\Omega_{\Lambda,r-1}}
		\norm{\bff\theta_h^r}{h}^2
		\right]
		+
		\mathbb E\left[
		\sum_{j=0}^{n-1}
		\mathbf1_{\Omega_{\Lambda,j}}
		\norm{\bff\theta_h^{j+1}-\bff\theta_h^j}{h}^2
		\right]
		+
		\mathbb E\left[
		k\sum_{j=0}^{n-1}
		\mathbf1_{\Omega_{\Lambda,j}}
		\norm{\partial_x\bff\theta_h^{j+1}}{\bb L^2}^2
		\right]
		\nonumber\\
		&\qquad\le
		C_\Lambda k\sum_{j=0}^{n-1}
		\mathbb E\left[
		\max_{1\le r\le j+1}
		\mathbf1_{\Omega_{\Lambda,r-1}}
		\norm{\bff\theta_h^r}{h}^2
		\right]
		+
		C\mathbb E\left[
		\sum_{j=0}^{n-1}
		\frac1k\norm{\mathcal R_h^{j+1}}{-1,h}^2
		\right]
		\nonumber\\
		&\qquad\quad+
		C_\Lambda\mathbb E\left[
		k\sum_{j=0}^{n-1}
		\sum_{\nu=1}^{3}
		\left(
		\norm{I_h(\bff m^{j+1}-\bff m^j)}{1,h}^{2\nu}
		+\norm{\bff m_h^{j+1}-\bff m_h^j}{1,h}^{2\nu}
		\right)
		\right]
		+C_{\Lambda,\gamma}k^{2\gamma}
		\nonumber\\
		&\qquad
		\leq
		C_\Lambda k\sum_{j=0}^{n-1}
		\mathbb E\left[
		\max_{1\le r\le j+1}
		\mathbf1_{\Omega_{\Lambda,r-1}}
		\norm{\bff\theta_h^r}{h}^2
		\right]
		+C_{\Lambda,\gamma}\left(h^2+k^{2\gamma}\right),
		\label{eq:local-error-gronwall-ready-sllg}
	\end{align}
	where in the last step we used Lemma~\ref{lem:consistency-sllg} and
	\eqref{eq:Xi-increment-defect-sum-sllg}.
	Now, choose $k_\Lambda>0$ so that $C_\Lambda k_\Lambda\le\frac12$.
	The term with $j=n-1$ can then be absorbed into the left-hand side.
	The discrete Gronwall inequality then yields~\eqref{eq:theta-error-local-sllg}.
	Finally, \eqref{eq:true-error-local-sllg} follows as a consequence of \eqref{eq:error-splitting-sllg}, \eqref{eq:rho-interpolation-error-sllg}, and \eqref{eq:theta-error-local-sllg}.
\end{proof}

\begin{corollary}[Convergence rate in probability]
	\label{cor:convergence-rate-probability-sllg}
	Under the assumptions of Theorem~\ref{thm:localised-error-sllg},
	let $\gamma\in(0,\frac12)$ and define
	\[
		\mathfrak E_{h,k}
		:=
		\max_{0\le j\le J}
		\norm{\bff m^j-\bff m_h^j}{\bb L^2}^2
		+
		k\sum_{j=0}^{J-1}
		\norm{
			\partial_x(\bff m^{j+1}-\bff m_h^{j+1})
		}{\bb L^2}^2.
	\]
	Then
	\begin{equation}\label{eq:convergence-rate-probability-sllg}
		\lim_{M\to\infty}
		\limsup_{h,k\to0}\,
		\mathbb P\left[
		\mathfrak E_{h,k}>
		M\left(h^2+k^{2\gamma}\right)
		\right]
		=0.
	\end{equation}
\end{corollary}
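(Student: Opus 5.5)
The plan is to deduce \eqref{eq:convergence-rate-probability-sllg} from Theorem~\ref{thm:localised-error-sllg} and the localisation estimate \eqref{eq:localisation-probability-sllg} by splitting $\Omega$ along the event $\Omega_{\Lambda,J}$ and choosing $\Lambda$ as a function of $M$. Fix $\gamma\in(0,\frac12)$, $M>0$ and $\Lambda>0$, and write $\epsilon_{h,k}:=h^2+k^{2\gamma}$. Since $\mathfrak E_{h,k}$ is exactly the quantity on the left of \eqref{eq:true-error-local-sllg} without the indicator, we have, for $0<k\le k_\Lambda$,
\[
\mathbb P\left[\mathfrak E_{h,k}>M\epsilon_{h,k}\right]
\le
\mathbb P\left[\mathbf 1_{\Omega_{\Lambda,J}}\mathfrak E_{h,k}>M\epsilon_{h,k}\right]
+
\mathbb P\left[\Omega_{\Lambda,J}^{\complement}\right].
\]

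For the first term we use Markov's inequality together with \eqref{eq:true-error-local-sllg}:
\[
\mathbb P\left[\mathbf 1_{\Omega_{\Lambda,J}}\mathfrak E_{h,k}>M\epsilon_{h,k}\right]
\le
\frac{\mathbb E\left[\mathbf 1_{\Omega_{\Lambda,J}}\mathfrak E_{h,k}\right]}{M\epsilon_{h,k}}
\le
\frac{C_{\Lambda,\gamma}}{M}.
\]
For the second term, \eqref{eq:localisation-probability-sllg} gives the bound $C/\Lambda$, with $C$ independent of $h$, $k$ and $\Lambda$. This uniformity in the discretisation parameters relies on Proposition~\ref{prop:p-moment-stability-sllg} together with the bound $\norm{\partial_x I_h\bff m_0}{\bb L^2}\le C\norm{\bff m_0}{\bb H^1}$ from \eqref{eq:Ih-stability}, and on \eqref{eq:exact-solution-regularity-sllg}. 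Since $k\le k_\Lambda$ holds eventually as $k\to0$, we obtain
\[
\limsup_{h,k\to0}\mathbb P\left[\mathfrak E_{h,k}>M\epsilon_{h,k}\right]
\le
\frac{C_{\Lambda,\gamma}}{M}+\frac{C}{\Lambda}
\]
for every $\Lambda>0$ and $M>0$.

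To finish, we let $M\to\infty$ with $\Lambda$ fixed, which gives $\lim_{M\to\infty}\limsup_{h,k\to0}\mathbb P[\cdots]\le C/\Lambda$. Since $\Lambda$ is arbitrary, letting $\Lambda\to\infty$ shows that the limit is zero. The main point to check is that the constants are ordered correctly: $C_{\Lambda,\gamma}$ may blow up in $\Lambda$, but $\Lambda$ is fixed before $M\to\infty$, and the threshold $k_\Lambda$ does not affect the $\limsup$ as $k\to0$. Beyond this, the argument is a routine consequence of the localised estimate.
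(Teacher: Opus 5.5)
Your proposal is correct and follows essentially the same route as the paper: you split along $\Omega_{\Lambda,J}$, bound the localised part by $C_{\Lambda,\gamma}/M$ via Markov's inequality and \eqref{eq:true-error-local-sllg}, bound the complement by $C/\Lambda$ via \eqref{eq:localisation-probability-sllg}, and then send $M\to\infty$ before $\Lambda\to\infty$. Your opening remark about choosing $\Lambda$ as a function of $M$ is unnecessary, since you, like the paper, keep $\Lambda$ fixed while $M\to\infty$.
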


\begin{proof}
	Fix $\Lambda>0$. Since $k_\Lambda>0$ is fixed once $\Lambda$ is
	fixed, one has $k\le k_\Lambda$ for all sufficiently small $k$.
	Hence, Theorem~\ref{thm:localised-error-sllg}, estimate
	\eqref{eq:localisation-probability-sllg}, and Markov's inequality give,
	for every $M>0$,
	\[
		\mathbb P\left[
		\mathfrak E_{h,k}>
		M\left(h^2+k^{2\gamma}\right)
		\right]
		\le
		\mathbb P[\Omega_{\Lambda,J}^{\complement}]
		+
		\mathbb P\left[
		\mathbf1_{\Omega_{\Lambda,J}}\mathfrak E_{h,k}
		>
		M\left(h^2+k^{2\gamma}\right)
		\right]
		\le
		\frac{C}{\Lambda}
		+
		\frac{C_{\Lambda,\gamma}}{M}.
	\]
	Therefore,
	\[
	\lim_{M\to\infty}
	\limsup_{h,k\to0}
	\mathbb P\left[
	\mathfrak E_{h,k}>
	M\left(h^2+k^{2\gamma}\right)
	\right]
	\le
	\frac{C}{\Lambda}.
	\]
	Since $\Lambda>0$ is arbitrary, letting $\Lambda\to\infty$ proves
	\eqref{eq:convergence-rate-probability-sllg}.
\end{proof}


\appendix

\section{Auxiliary inequalities}

\begin{lemma}[Discrete Jensen inequality]\label{lem:weighted-discrete-holder}
	Let $q\ge1$ and $k>0$. Let $\{a_j\}_{j=0}^{J-1}$ be a non-negative sequence and set $T=Jk$. Then
	\begin{equation}\label{eq:weighted-discrete-holder}
		\left(
		\sum_{j=0}^{J-1} k a_j
		\right)^q
		\le
		T^{q-1}
		\sum_{j=0}^{J-1} k a_j^q.
	\end{equation}
	The sign in \eqref{eq:weighted-discrete-holder} is reversed if $q\in (0,1)$.
\end{lemma}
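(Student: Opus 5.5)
This is a Jensen/H\"older inequality for a uniform weighted average, so the plan is to normalise by the total mass $T=Jk$. I would set $\mu_j:=k/T=1/J$, so that $\sum_{j=0}^{J-1}\mu_j=1$ and $\{\mu_j\}$ is a probability weight on $\{0,\ldots,J-1\}$. Then
\[
\sum_{j=0}^{J-1}k a_j = T\sum_{j=0}^{J-1}\mu_j a_j .
\]

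For $q\ge1$ the map $x\mapsto x^q$ is convex on $[0,\infty)$. Jensen's inequality for the finite probability measure $\mu$ gives
\[
\Bigl(\sum_j\mu_j a_j\Bigr)^q\le\sum_j\mu_j a_j^q .
\]
Multiplying by $T^q$ yields
\[
\Bigl(\sum_j k a_j\Bigr)^q\le T^q\sum_j\frac{k}{T}a_j^q=T^{q-1}\sum_j k a_j^q ,
\]
which is exactly \eqref{eq:weighted-discrete-holder}.

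Alternatively, one can avoid Jensen and use H\"older's inequality with exponents $q$ and $q'=q/(q-1)$:
\[
\sum_j k a_j=\sum_j k^{1/q'}\,k^{1/q}a_j\le\Bigl(\sum_j k\Bigr)^{1/q'}\Bigl(\sum_j k a_j^q\Bigr)^{1/q}.
\]
Since $\sum_j k=T$, raising both sides to the power $q$ gives the same bound. The case $q=1$ is an equality.

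For $q\in(0,1)$ the function $x\mapsto x^q$ is concave on $[0,\infty)$, so Jensen's inequality reverses: $(\sum_j\mu_j a_j)^q\ge\sum_j\mu_j a_j^q$. The same rescaling then gives
\[
\Bigl(\sum_j k a_j\Bigr)^q\ge T^{q-1}\sum_j k a_j^q .
\]
This is the form used in the proof of Lemma~\ref{lem:stochastic-error-term-sllg}, where $J^{\frac{\rho}{2+2\rho}}$ appears after converting back to unweighted sums.

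None of these steps is a real obstacle. The only point needing care is to check that nonnegativity of $a_j$ is used, so that $x^q$ is defined and convex (respectively concave) on the relevant domain, and that the weights sum exactly to one because $T=Jk$.
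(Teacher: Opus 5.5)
Your proof is correct: normalising by $T=Jk$ to get uniform probability weights $k/T$ and applying Jensen's inequality to the convex (respectively concave) map $x\mapsto x^q$ gives both cases, and the H\"older variant for $q\ge1$ is also valid. The paper states this lemma without proof and treats it as this standard consequence of Jensen's/H\"older's inequality, so your argument is essentially the intended one.
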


\begin{lemma}[Discrete BDG inequality]\label{lem:discrete-BDG}
	Let $p\ge2$, let $H$ be a Hilbert space, and let
	$\{\bff Z_j\}_{j=0}^{J-1}$ be an $H$-valued martingale difference sequence
	with respect to a filtration $\{\mathcal F_j\}_{j=0}^{J}$, that is,
	$\bff Z_j$ is $\mathcal F_{j+1}$-measurable and
	\[
	\mathbb E[\bff Z_j\mid\mathcal F_j]=\bff0,
	\qquad
	j=0,\ldots,J-1.
	\]
	Assume that $\bff Z_j\in L^p(\Omega;H)$ for each $j$. Then there exists
	a constant $C_p>0$, depending only on $p$, such that
	\begin{equation}\label{eq:discrete-BDG-maximal}
		\left(
		\mathbb E
		\left[
		\max_{0\le r\le J}
		\norm{\sum_{j=0}^{r-1}\bff Z_j}{H}^p
		\right]
		\right)^{1/p}
		\le
		C_p
		\left(
		\sum_{j=0}^{J-1}
		\left(
		\mathbb E
		\left[
		\norm{\bff Z_j}{H}^p
		\right]
		\right)^{2/p}
		\right)^{\frac12}.
	\end{equation}
\end{lemma}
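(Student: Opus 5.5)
The plan is to combine three standard ingredients: Doob's maximal inequality, Burkholder's square-function inequality for Hilbert-space-valued martingales, and Minkowski's inequality in $L^{p/2}(\Omega)$. The exponent condition $p\ge2$ is exactly what makes the last step possible. Set $\bff M_r:=\sum_{j=0}^{r-1}\bff Z_j$ for $r=0,\ldots,J$. Then $\{\bff M_r\}$ is an $H$-valued martingale with respect to $\{\mathcal F_r\}$, it lies in $L^p(\Omega;H)$, and $\bff M_0=\bff 0$.

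\textbf{Step 1 (maximal function).} The process $r\mapsto\norm{\bff M_r}{H}$ is a non-negative submartingale, since the norm is convex and we may use conditional Jensen. Doob's $L^p$ maximal inequality therefore gives
\[
\mathbb E\Bigl[\max_{0\le r\le J}\norm{\bff M_r}{H}^p\Bigr]\le\Bigl(\tfrac{p}{p-1}\Bigr)^p\mathbb E\bigl[\norm{\bff M_J}{H}^p\bigr].
\]

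\textbf{Step 2 (square function).} Next I would invoke Burkholder's inequality for martingales with values in a Hilbert space:
\[
\mathbb E\bigl[\norm{\bff M_J}{H}^p\bigr]\le C_p\,\mathbb E\Bigl[\Bigl(\sum_{j=0}^{J-1}\norm{\bff Z_j}{H}^2\Bigr)^{p/2}\Bigr].
\]
Hilbert spaces have the UMD property, and Burkholder's original proof carries over verbatim with constants depending only on $p$. For $p=2$ this inequality is an identity, by orthogonality of martingale increments. Alternatively, one can cite the maximal form directly, i.e.\ the Burkholder--Davis--Gundy inequality for discrete Hilbert-space martingales, and skip Step 1.

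\textbf{Step 3 (Minkowski).} Since $p/2\ge1$, the triangle inequality in $L^{p/2}(\Omega)$ applied to the non-negative random variables $\norm{\bff Z_j}{H}^2$ gives
\[
\Bigl(\mathbb E\Bigl[\Bigl(\sum_{j}\norm{\bff Z_j}{H}^2\Bigr)^{p/2}\Bigr]\Bigr)^{2/p}\le\sum_j\bigl(\mathbb E\norm{\bff Z_j}{H}^p\bigr)^{2/p}.
\]
Taking square roots and combining with Steps 1--2 yields \eqref{eq:discrete-BDG-maximal}, with constant $C_p$ depending only on $p$.

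The only nontrivial step is Step 2, the square-function inequality in the infinite-dimensional Hilbert space $H$. If I do not want to rely on citing the vector-valued Burkholder inequality, I would prove it directly. One route is to embed the discrete martingale into a continuous one by a Brownian representation of each increment. A more elementary route is to apply Burkholder's good-$\lambda$ argument, or Davis' decomposition, which uses only the parallelogram identity $\norm{\bff M_{r+1}}{H}^2=\norm{\bff M_r}{H}^2+2\inpro{\bff M_r}{\bff Z_r}_H+\norm{\bff Z_r}{H}^2$. From this identity one gets the estimate for $\norm{\bff M_J}{H}^2$ by iterating and handling the cross martingale term with the scalar BDG inequality.
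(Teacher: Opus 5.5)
The paper gives no proof of this lemma. It is listed among the auxiliary inequalities in the appendix and used as a standard fact, so there is no argument in the paper to compare yours with.

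Your derivation is correct, and it is the standard argument behind such a citation:
\begin{itemize}
\item $r\mapsto\norm{\bff M_r}{H}$ is a nonnegative $L^p$-submartingale, so Doob's inequality applies since $p\ge2>1$.
\item The square-function inequality holds for $H$-valued martingales with a constant depending only on $p$.
\item The Minkowski step in $L^{p/2}(\Omega)$ is exactly where $p\ge2$ is used, and it produces precisely the right-hand side of \eqref{eq:discrete-BDG-maximal}.
\end{itemize}

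\textbf{Your fallback sketch.} Expanding $\norm{\bff M_{r+1}}{H}^2$ and bounding the real martingale $\sum_r\inpro{\bff M_r}{\bff Z_r}_H$ by scalar BDG with exponent $p/2$ is neither a good-$\lambda$ argument nor Davis' decomposition. Its square function is at most $\max_r\norm{\bff M_r}{H}\,(\sum_r\norm{\bff Z_r}{H}^2)^{1/2}$, and you then absorb via Young's inequality and Doob. This is nonetheless a valid reduction to the scalar case, once you note that $\mathbb E[\max_r\norm{\bff M_r}{H}^p]<\infty$ (there are finitely many $\bff Z_j\in L^p(\Omega;H)$), which makes the absorption legitimate.

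\textbf{A more elementary alternative.} The lemma's right-hand side is weaker than the square-function bound, so you can bypass Burkholder entirely. Let $\bff x\in L^p(\Omega;H)$ be $\mathcal G$-measurable and $\bff y\in L^p(\Omega;H)$ satisfy $\mathbb E[\bff y\mid\mathcal G]=\bff0$. Then $\phi(t):=(\mathbb E\norm{\bff x+t\bff y}{H}^p)^{2/p}$ has $\phi'(0)=0$. By H\"older's inequality and $p\ge2$, $\phi''\le2(p-1)(\mathbb E\norm{\bff y}{H}^p)^{2/p}$, whence
\[
(\mathbb E\norm{\bff x+\bff y}{H}^p)^{2/p}\le(\mathbb E\norm{\bff x}{H}^p)^{2/p}+(p-1)(\mathbb E\norm{\bff y}{H}^p)^{2/p}.
\]
Iterate this with $\bff x=\bff M_r$ and $\bff y=\bff Z_r$, then apply Doob. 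This gives the lemma with the explicit constant $C_p=p/\sqrt{p-1}$, using only calculus and H\"older's inequality. Your route instead relies on the deeper vector-valued Burkholder inequality, but in exchange it yields the stronger intermediate square-function estimate.
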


\begin{lemma}\label{lem:stopped-time-identity-sllg}
	Let $\{\Omega_j\}_{j=0}^J$ be a sequence of decreasing events.
	For any sequence $\{\bff v^j\}_{j=0}^J\subset\bb V_h$ and any
	$1\le n\le J$,
	\begin{align*}
		&\max_{1\le r\le n}
		\sum_{j=0}^{r-1}
		\mathbf1_{\Omega_j}
		\inpro{
			\bff v^{j+1}-\bff v^j
		}{
			\bff v^{j+1}
		}_h
		\ge
		\frac14
		\max_{1\le r\le n}
		\mathbf1_{\Omega_{r-1}}\norm{\bff v^r}{h}^2
		+
		\frac14
		\sum_{j=0}^{n-1}
		\mathbf1_{\Omega_j}
		\norm{\bff v^{j+1}-\bff v^j}{h}^2
		-
		\frac12
		\mathbf1_{\Omega_0}\norm{\bff v^0}{h}^2.
	\end{align*}
\end{lemma}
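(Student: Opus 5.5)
The plan is to reduce the statement to a pointwise-in-$j$ inequality via Abel summation. Write $a_j:=\mathbf 1_{\Omega_j}\in\{0,1\}$, which is nonincreasing in $j$ because the events are decreasing. The polarisation identity in the mass-lumped inner product gives
\[
\inpro{\bff v^{j+1}-\bff v^j}{\bff v^{j+1}}_h
=\frac12\norm{\bff v^{j+1}}{h}^2-\frac12\norm{\bff v^j}{h}^2+\frac12\norm{\bff v^{j+1}-\bff v^j}{h}^2 .
\]
Summing with weights $a_j$ up to $r-1$ and summing by parts yields
\[
\sum_{j=0}^{r-1}a_j\inpro{\bff v^{j+1}-\bff v^j}{\bff v^{j+1}}_h
=\frac12 a_{r-1}\norm{\bff v^r}{h}^2+\frac12\sum_{j=1}^{r-1}(a_{j-1}-a_j)\norm{\bff v^j}{h}^2-\frac12 a_0\norm{\bff v^0}{h}^2+\frac12\sum_{j=0}^{r-1}a_j\norm{\bff v^{j+1}-\bff v^j}{h}^2 .
\]
Since $a_{j-1}-a_j\ge0$, the middle sum is nonnegative and can be dropped. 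Denote by $X_r$ the left-hand side. This gives, for every $1\le r\le n$,
\[
X_r\ge \tfrac12 a_{r-1}\norm{\bff v^r}{h}^2+\tfrac12\sum_{j=0}^{r-1}a_j\norm{\bff v^{j+1}-\bff v^j}{h}^2-\tfrac12 a_0\norm{\bff v^0}{h}^2 .
\]

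Next I take maxima. Fix the index $r^\ast$ realising $\max_{1\le r\le n}a_{r-1}\norm{\bff v^r}{h}^2$. Then
\[
\max_r X_r\ge X_{r^\ast}\ge \tfrac12\max_r a_{r-1}\norm{\bff v^r}{h}^2-\tfrac12 a_0\norm{\bff v^0}{h}^2,
\]
using that the increment sum is nonnegative. Similarly,
\[
\max_r X_r\ge X_n\ge \tfrac12\sum_{j=0}^{n-1}a_j\norm{\bff v^{j+1}-\bff v^j}{h}^2-\tfrac12 a_0\norm{\bff v^0}{h}^2,
\]
since $a_{n-1}\norm{\bff v^n}{h}^2\ge0$. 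Averaging these two lower bounds with weights $\frac12,\frac12$ gives exactly the claimed inequality, with constants $\frac14$, $\frac14$ and $-\frac12$.

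The only delicate point is the sign of the boundary terms in the Abel summation. This is where monotonicity of $\Omega_j$ enters: without it, the terms $(a_{j-1}-a_j)\norm{\bff v^j}{h}^2$ could be negative and uncontrolled. No other obstacle is expected.
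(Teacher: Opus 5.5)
Your proposal is correct and follows the paper's proof essentially step by step: the polarisation identity, Abel summation, and dropping the nonnegative terms $(\mathbf1_{\Omega_{j-1}}-\mathbf1_{\Omega_j})\norm{\bff v^j}{h}^2$ by monotonicity of the events. Your explicit averaging of the lower bounds at $r^\ast$ and at $r=n$ is what justifies the paper's terse final step ``taking the maximum over $r$'', and it is where the constants $\frac14$ come from.
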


\begin{proof}
	For each $1\le r\le n$, the identity
	\[
	2\inpro{\bff v^{j+1}-\bff v^j}{\bff v^{j+1}}_h
	=
	\norm{\bff v^{j+1}}{h}^2
	-
	\norm{\bff v^j}{h}^2
	+
	\norm{\bff v^{j+1}-\bff v^j}{h}^2
	\]
	and summation by parts give
	\begin{align*}
		2\sum_{j=0}^{r-1}
		\mathbf1_{\Omega_j}
		\inpro{\bff v^{j+1}-\bff v^j}{\bff v^{j+1}}_h
		&=
		\mathbf1_{\Omega_{r-1}}\norm{\bff v^r}{h}^2
		-
		\mathbf1_{\Omega_0}\norm{\bff v^0}{h}^2
		\\
		&\qquad+
		\sum_{j=1}^{r-1}
		\left(
		\mathbf1_{\Omega_{j-1}}-\mathbf1_{\Omega_j}
		\right)
		\norm{\bff v^j}{h}^2
		+
		\sum_{j=0}^{r-1}
		\mathbf1_{\Omega_j}
		\norm{\bff v^{j+1}-\bff v^j}{h}^2.
	\end{align*}
	Since the events are decreasing, the penultimate term on the right is
	nonnegative. Hence,
	\begin{align*}
		&\sum_{j=0}^{r-1}
		\mathbf1_{\Omega_j}
		\inpro{\bff v^{j+1}-\bff v^j}{\bff v^{j+1}}_h
		+
		\frac12\mathbf1_{\Omega_0}\norm{\bff v^0}{h}^2
		\ge
		\frac12\mathbf1_{\Omega_{r-1}}\norm{\bff v^r}{h}^2
		+
		\frac12\sum_{j=0}^{r-1}
		\mathbf1_{\Omega_j}
		\norm{\bff v^{j+1}-\bff v^j}{h}^2.
	\end{align*}
	Taking the maximum over $r=1,\ldots,n$ proves the claim.
\end{proof}

\section{Technical consistency estimates}

\begin{lemma}\label{lem:full-H1-increment-moments-sllg}
	Assume the hypotheses of
	Proposition~\ref{prop:p-moment-stability-sllg} hold. Then, for every integer
	$p\ge1$,
	\begin{equation}\label{eq:full-H1-increment-moments-sllg}
		\mathbb E\left[
		\left(
		\sum_{j=0}^{J-1}
		\norm{\bff m_h^{j+1}-\bff m_h^j}{1,h}^2
		\right)^p
		\right]
		\le C_p.
	\end{equation}
\end{lemma}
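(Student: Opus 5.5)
The $\norm{\cdot}{1,h}$-norm of the increment splits into the lumped $\bb L^2$ part and the gradient part:
\[
\norm{\bff m_h^{j+1}-\bff m_h^j}{1,h}^2
\le
2\norm{\bff m_h^{j+1}-\bff m_h^j}{h}^2
+
2\norm{\partial_x(\bff m_h^{j+1}-\bff m_h^j)}{\bb L^2}^2 .
\]
Raising the sum to the $p$-th power and using $(a+b)^p\le 2^{p-1}(a^p+b^p)$, it suffices to bound the $p$-th moments of each of the two sums separately. The gradient part is already controlled directly by Proposition~\ref{prop:p-moment-stability-sllg}: the second term on its left-hand side is exactly $\mathbb E[(\sum_j\norm{\partial_x(\bff m_h^{j+1}-\bff m_h^j)}{\bb L^2}^2)^p]$. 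The right-hand side there is bounded by a constant, since $\bff m_h^0=I_h\bff m_0$ and \eqref{eq:Ih-stability} give $\norm{\partial_x\bff m_h^0}{\bb L^2}\le C\norm{\bff m_0}{\bb H^1}$, which is deterministic.

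For the lumped $\bb L^2$ part, I would start from the pointwise increment bound \eqref{eq:preliminary-L2-increment-sllg}. Squaring and summing gives
\[
\sum_{j=0}^{J-1}\norm{\bff m_h^{j+1}-\bff m_h^j}{h}^2
\le
Ck\cdot k\sum_{j=0}^{J-1}\norm{\bff m_h^{j+\frac12}\times\Delta_h\bff m_h^{j+1}}{h}^2
+
C\sum_{j=0}^{J-1}\norm{\hat{\Delta}_jW_h}{\bb L^\infty}^2 .
\]
For the first term, I would use $k\le1$ and take $p$-th moments; the third term on the left of \eqref{eq:p-moment-stability-sllg} then bounds it by a constant. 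For the second term, Lemma~\ref{lem:weighted-discrete-holder} (with $a_j=k^{-1}\norm{\hat{\Delta}_jW_h}{\bb L^\infty}^2$) gives
\[
\Bigl(\sum_j\norm{\hat{\Delta}_jW_h}{\bb L^\infty}^2\Bigr)^p
\le
T^{p-1}\sum_j k^{1-p}\norm{\hat{\Delta}_jW_h}{\bb L^\infty}^{2p}.
\]
Taking expectations and applying \eqref{eq:noise-increment-moment-sllg} with $q=p$ bounds this by $T^{p-1}\sum_j k^{1-p}C_p\Gamma^{2p}k^p\le C_{p,T,\Gamma}$.

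No step is a genuine obstacle; the argument is bookkeeping once Proposition~\ref{prop:p-moment-stability-sllg} is in place. The only point needing care is to avoid the naive estimate $\mathbb E[(\sum_j X_j)^p]\le J^{p-1}\sum_j\mathbb E[X_j^p]$ applied to the noise sum without the $k$-weighting, which would lose powers of $k$. The weighted Jensen step above handles this exactly: each increment has $2p$-th moment of order $k^p$, balancing the factor $k^{1-p}$ and the $J=T/k$ summands. Combining the two parts yields \eqref{eq:full-H1-increment-moments-sllg}.
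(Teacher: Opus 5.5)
Your argument is correct and is essentially the paper's proof. You split the $\norm{\cdot}{1,h}$-norm, control the gradient increments by the second term of \eqref{eq:p-moment-stability-sllg}, and bound the lumped $\bb L^2$ increments via \eqref{eq:preliminary-L2-increment-sllg}, the third term of \eqref{eq:p-moment-stability-sllg}, and discrete Jensen with \eqref{eq:noise-increment-moment-sllg}.

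Two small remarks. First, Proposition~\ref{prop:p-moment-stability-sllg} is only stated for $p\ge2$, so the case $p=1$ should be deduced from $p=2$ by Jensen's inequality, as the paper does. Second, your warning about the ``naive'' bound is unfounded: with $X_j:=\norm{\hat{\Delta}_jW_h}{\bb L^\infty}^2$, the identity $T^{p-1}k^{1-p}=J^{p-1}$ makes your weighted estimate literally identical to $\mathbb E[(\sum_jX_j)^p]\le J^{p-1}\sum_j\mathbb E[X_j^p]$. That is the bound the paper uses, and it loses no powers of $k$, since $J^{p-1}\cdot J\cdot k^p=T^p$.
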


\begin{proof}
	The nodewise estimate \eqref{eq:preliminary-L2-increment-sllg} gives
	\begin{align*}
		\sum_{j=0}^{J-1}
		\norm{\bff m_h^{j+1}-\bff m_h^j}{h}^2
		&\le
		Ck\left(
		k\sum_{j=0}^{J-1}
		\norm{
			\bff m_h^{j+\frac12}\times\Delta_h\bff m_h^{j+1}
		}{h}^2
		\right)
		+C\sum_{j=0}^{J-1}
		\norm{\hat{\Delta}_jW_h}{\bb L^\infty}^2.
	\end{align*}
	The first term on the right has moments of every order by
	Proposition~\ref{prop:p-moment-stability-sllg}. Moreover, the discrete Jensen inequality and~\eqref{eq:noise-increment-moment-sllg} imply that
	\[
	\begin{aligned}
		\mathbb E\left[
		\left(
		\sum_{j=0}^{J-1}
		\norm{\hat{\Delta}_jW_h}{\bb L^\infty}^2
		\right)^p
		\right]
		&\le
		J^{p-1}\sum_{j=0}^{J-1}
		\mathbb E\left[
		\norm{\hat{\Delta}_jW_h}{\bb L^\infty}^{2p}
		\right]
		\le C_p.
	\end{aligned}
	\]
	All moments of the gradient-increment contribution is controlled by the second term in
	\eqref{eq:p-moment-stability-sllg}. 
	These imply \eqref{eq:full-H1-increment-moments-sllg} for
	$p\ge2$. The case $p=1$ follows from the case $p=2$ and Jensen's
	inequality.
\end{proof}

In the following, we will establish several technical lemmas controlling the midpoint defect in the drift term $\mathscr A_h$.

\begin{lemma}\label{lem:endpoint-drift-monotonicity-sllg}
	Let $\mathscr A_h$ be the form given by \eqref{eq:discrete-drift-form-sllg} and $\Omega_{\Lambda,j}$ be defined by~\eqref{eq:localisation-set-sllg}.
	For every $\Lambda>0$ and $\eta>0$, there exists
	$C_{\Lambda,\eta}>0$, independent of $h$, $k$, and $j$, such that
	\begin{align}
		&\mathbf1_{\Omega_{\Lambda,j}}
		\Bigl[
		\mathscr A_h(
		I_h\bff m^{j+1};
		I_h\bff m^{j+1},\bff\theta_h^{j+1}
		)
		-
		\mathscr A_h(
		\bff m_h^{j+1};
		\bff m_h^{j+1},\bff\theta_h^{j+1}
		)
		\Bigr]
		\nonumber\\
		&\qquad\ge
		(\alpha\kappa-\eta)
		\mathbf1_{\Omega_{\Lambda,j}}
		\norm{\partial_x\bff\theta_h^{j+1}}{\bb L^2}^2
		-
		C_{\Lambda,\eta}
		\mathbf1_{\Omega_{\Lambda,j}}
		\norm{\bff\theta_h^{j+1}}{h}^2
		\nonumber\\
		&\qquad\quad
		-
		C_{\Lambda,\eta}
		\left(
		\norm{I_h(\bff m^{j+1}-\bff m^j)}{1,h}^4
		+
		\norm{\bff m_h^{j+1}-\bff m_h^j}{1,h}^4
		\right).
		\label{eq:endpoint-drift-monotonicity-sllg}
	\end{align}
\end{lemma}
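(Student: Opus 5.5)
The plan is to exploit the nodal unit-length property of both arguments, reduce the drift difference to an exact coercive term plus lower-order remainders, and bound those remainders by the discrete Agmon inequality and Young's inequality.

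Write $\bff a:=I_h\bff m^{j+1}$ and $\bff b:=\bff m_h^{j+1}$, so that $\bff\theta:=\bff\theta_h^{j+1}=\bff a-\bff b$. Since $\bff m$ takes values in $\mathbb S^2$ and Lemma~\ref{lem:nodal-constraint-sllg} holds, we have $|\bff a(z)|=|\bff b(z)|=1$ at every node. Because the mass-lumped product is nodal, only nodal values matter. At nodes the identity $\bff c\times(\bff c\times\bff v)=(\bff c\cdot\bff v)\bff c-\bff v$ holds for $|\bff c|=1$.

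\textbf{Damping part.} By the identity above, the damping part of the difference equals
\[
-\alpha\kappa\inpro{\Delta_h\bff\theta}{\bff\theta}_h+\alpha\kappa\Bigl[\inpro{(\bff a\cdot\Delta_h\bff a)\bff a}{\bff\theta}_h-\inpro{(\bff b\cdot\Delta_h\bff b)\bff b}{\bff\theta}_h\Bigr].
\]
By \eqref{eq:def-discrete-laplacian-sllg}, the first term is exactly $\alpha\kappa\norm{\partial_x\bff\theta}{\bb L^2}^2$. For the bracket, use the nodal identities $\bff a\cdot\bff\theta=\tfrac12|\bff\theta|^2$ and $\bff b\cdot\bff\theta=-\tfrac12|\bff\theta|^2$, valid because $|\bff a|=|\bff b|=1$. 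The bracket then becomes
\[
\tfrac12\inpro{\bff a\cdot\Delta_h\bff a+\bff b\cdot\Delta_h\bff b}{|\bff\theta|^2}_h,
\]
understood as a nodal sum. The key observation is that in one dimension the nodal quantity $-\beta_z\,\bff c(z)\cdot\Delta_h\bff c(z)$ with $|\bff c|=1$ at the nodes is a nonnegative combination of squared nodal differences. Explicitly, it equals
\[
\sum_{K\ni z}\tfrac12 h_K|\partial_x\bff c|_K|^2.
\]
This follows from $\bff c(z)\cdot(\bff c(z')-\bff c(z))=-\tfrac12|\bff c(z')-\bff c(z)|^2$. Hence
\[
\bigl|\inpro{\bff c\cdot\Delta_h\bff c}{|\bff\theta|^2}_h\bigr|\le\norm{\partial_x\bff c}{\bb L^2}^2\norm{\bff\theta}{\bb L^\infty}^2 .
\]
Applying the discrete Agmon inequality \eqref{eq:discrete-agmon-sllg} and Young's inequality then gives the bound
\[
\tfrac{\eta}{2}\norm{\partial_x\bff\theta}{\bb L^2}^2+C_\eta\bigl(1+\norm{\partial_x\bff c}{\bb L^2}^4\bigr)\norm{\bff\theta}{h}^2 .
\]
Here $\bff c$ stands for $\bff a$ or $\bff b$.

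\textbf{Precession part.} The precession difference reduces to $-\kappa\inpro{\bff b\times\Delta_h\bff\theta}{\bff\theta}_h$, because $\inpro{\bff\theta\times\Delta_h\bff a}{\bff\theta}_h=0$ nodally. Moving $\Delta_h$ as in \eqref{eq:discrete-cross-cancellation-sllg}, this equals $\pm\kappa\inpro{\partial_xI_h(\bff b\times\bff\theta)}{\partial_x\bff\theta}$. On each element, $\partial_xI_h(\bff b\times\bff\theta)=\partial_x\bff b\times\bff\theta(x_K)+\bff b(x_K)\times\partial_x\bff\theta$ exactly, where $x_K$ is the element midpoint. The second piece is orthogonal to $\partial_x\bff\theta$. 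What remains is bounded by $\norm{\partial_x\bff b}{\bb L^2}\norm{\bff\theta}{\bb L^\infty}\norm{\partial_x\bff\theta}{\bb L^2}$. Agmon and Young's inequality then give
\[
\tfrac{\eta}{2}\norm{\partial_x\bff\theta}{\bb L^2}^2+C_\eta\bigl(1+\norm{\partial_x\bff b}{\bb L^2}^4\bigr)\norm{\bff\theta}{h}^2 .
\]

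\textbf{Localisation and the expected obstacle.} The main obstacle is that $\Omega_{\Lambda,j}$ only controls time level $j$, whereas the factors $\norm{\partial_x\bff a}{\bb L^2}^4$ and $\norm{\partial_x\bff b}{\bb L^2}^4$ live at level $j+1$. I would handle this by writing $\norm{\partial_x\bff m_h^{j+1}}{\bb L^2}\le\norm{\partial_x\bff m_h^j}{\bb L^2}+\norm{\bff m_h^{j+1}-\bff m_h^j}{1,h}$. Analogously, using \eqref{eq:Ih-stability}, I would write $\norm{\partial_x I_h\bff m^{j+1}}{\bb L^2}\le C\norm{\bff m^j}{\bb H^1}+\norm{I_h(\bff m^{j+1}-\bff m^j)}{1,h}$. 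On $\Omega_{\Lambda,j}$ this gives $\norm{\partial_x\bff c}{\bb L^2}^4\le C\Lambda+C(\text{increment})^4$.

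For the increment contributions I would use that $\norm{\bff\theta}{h}\le C$, since $\norm{\bff\theta}{\bb L^\infty}\le2$. The product $(\text{increment})^4\norm{\bff\theta}{h}^2$ is then bounded by $C(\text{increment})^4$, which produces exactly the last line of \eqref{eq:endpoint-drift-monotonicity-sllg}. Collecting the $\eta$-terms, renaming $\eta$, and setting $C_{\Lambda,\eta}:=C_\eta(1+\Lambda)$ completes the argument.
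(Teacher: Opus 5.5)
Your proof is correct. Its skeleton is the paper's: the nodal identities $\bff a\cdot\bff\theta=\frac12|\bff\theta|^2$ and $\bff b\cdot\bff\theta=-\frac12|\bff\theta|^2$; the triple-product expansion giving $\alpha\kappa\norm{\partial_x\bff\theta}{\bb L^2}^2$ plus the remainder $\frac{\alpha\kappa}{2}\sum_{z}\beta_z\bigl(\bff a\cdot\Delta_h\bff a+\bff b\cdot\Delta_h\bff b\bigr)(z)\,|\bff\theta(z)|^2$; discrete Agmon plus Young; and localisation through the increments together with $\norm{\bff\theta}{h}^2\le 4|D|$.

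The genuinely different step is the damping remainder. The paper integrates $\inpro{\Delta_h\bff c}{I_h(|\bff\theta|^2\bff c)}_h$ by parts and applies the product estimate \eqref{eq:Ih-product-H1-sllg}. This gives $\norm{\partial_x\bff c}{\bb L^2}\norm{\bff\theta}{\bb L^\infty}\norm{\partial_x\bff\theta}{\bb L^2}+\norm{\partial_x\bff c}{\bb L^2}^2\norm{\bff\theta}{\bb L^\infty}^2$. You instead use the exact nodal identity $-\beta_z\bff c(z)\cdot\Delta_h\bff c(z)=\sum_{K\ni z}\frac12h_K\bigl|\partial_x\bff c|_K\bigr|^2$, the discrete version of $\bff c\cdot\partial_{xx}\bff c=-|\partial_x\bff c|^2$ for unit vectors. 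This yields directly the sharper bound $\norm{\partial_x\bff c}{\bb L^2}^2\norm{\bff\theta}{\bb L^\infty}^2$, with no mixed term. Your route exploits more structure: exact nodal unit length and the nonpositive off-diagonal $P_1$ stiffness entries in one dimension. The paper's route needs only a generic product bound. After Agmon and Young both reduce to $\eta\norm{\partial_x\bff\theta}{\bb L^2}^2+C_\eta(1+\norm{\partial_x\bff c}{\bb L^2}^4)\norm{\bff\theta}{h}^2$, so nothing changes downstream.

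In the precession part, your splitting $\bff\theta\times\Delta_h\bff a+\bff b\times\Delta_h\bff\theta$ is the mirror image of the paper's, which is immaterial. Your elementwise midpoint formula usefully makes explicit the orthogonality $\bigl(\bff b(x_K)\times\partial_x\bff\theta\bigr)\cdot\partial_x\bff\theta=0$, on which the paper's one-line bound tacitly relies. This is the content of \eqref{eq:discrete-cross-cancellation-sllg}. The estimate \eqref{eq:Ih-product-H1-sllg} alone would leave a term $C\kappa\norm{\partial_x\bff\theta}{\bb L^2}^2$ that cannot in general be absorbed into $\alpha\kappa\norm{\partial_x\bff\theta}{\bb L^2}^2$.
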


\begin{proof}
	Note that both $I_h\bff m^{j+1}$ and $\bff m_h^{j+1}$ have unit length at
	every node. Hence, we have the identities
	\[
	\bff\theta_h^{j+1}(\bff{z}) \cdot I_h\bff m^{j+1}(\bff{z})
	=
	\frac12|\bff\theta_h^{j+1}(\bff{z})|^2,
	\qquad
	\bff\theta_h^{j+1}(\bff{z}) \cdot\bff m_h^{j+1}(\bff{z})
	=
	-\frac12|\bff\theta_h^{j+1}(\bff{z})|^2, \qquad
	\forall \bff{z}\in \mathcal{N}_h.
	\]
	Using the triple product identity
	$\bff a\times(\bff a\times\bff z)
	=(\bff a\cdot\bff z)\bff a-\bff z$ for $|\bff a|=1$,
	we obtain
	\begin{align}
		&\inpro{
			I_h\bff m^{j+1}
			\times
			\left(
			I_h\bff m^{j+1}
			\times\Delta_hI_h\bff m^{j+1}
			\right)
		}{\bff\theta_h^{j+1}}_h
		-
		\inpro{
			\bff m_h^{j+1}
			\times
			\left(
			\bff m_h^{j+1}
			\times\Delta_h\bff m_h^{j+1}
			\right)
		}{\bff\theta_h^{j+1}}_h
		\nonumber\\
		&\qquad=
		\norm{\partial_x\bff\theta_h^{j+1}}{\bb L^2}^2
		+
		\frac12
		\inpro{
			\Delta_hI_h\bff m^{j+1}
		}{
			I_h\left(
			|\bff\theta_h^{j+1}|^2I_h\bff m^{j+1}
			\right)
		}_h
		+
		\frac12
		\inpro{
			\Delta_h\bff m_h^{j+1}
		}{
			I_h\left(
			|\bff\theta_h^{j+1}|^2\bff m_h^{j+1}
			\right)
		}_h.
		\label{eq:endpoint-damping-identity-sllg}
	\end{align}
	The elementwise product estimate \eqref{eq:Ih-product-H1-sllg} and the nodal unit-length bound \eqref{eq:nodal-constraint-sllg} give
	\begin{align*}
		&
		\norm{
			\partial_x
			I_h\left(
			|\bff\theta_h^{j+1}|^2I_h\bff m^{j+1}
			\right)
		}{\bb L^2}
		\le
		C
		\norm{\bff\theta_h^{j+1}}{\bb L^\infty}
		\norm{\partial_x\bff\theta_h^{j+1}}{\bb L^2}
		+
		C
		\norm{\bff\theta_h^{j+1}}{\bb L^\infty}^2
		\norm{\partial_xI_h\bff m^{j+1}}{\bb L^2},
	\end{align*}
	and analogously with $I_h\bff m^{j+1}$ replaced by
	$\bff m_h^{j+1}$. Therefore, using the definition of $\Delta_h$ in \eqref{eq:def-discrete-laplacian-sllg} to discretely integrate by parts,
	\begin{align}
		&\left|
		\frac12
		\inpro{
			\Delta_hI_h\bff m^{j+1}
		}{
			I_h\left(
			|\bff\theta_h^{j+1}|^2I_h\bff m^{j+1}
			\right)
		}_h
		+
		\frac12
		\inpro{
			\Delta_h\bff m_h^{j+1}
		}{
			I_h\left(
			|\bff\theta_h^{j+1}|^2\bff m_h^{j+1}
			\right)
		}_h
		\right|
		\nonumber\\
		&\qquad\le
		C
		\left(
		\norm{\partial_xI_h\bff m^{j+1}}{\bb L^2}
		+
		\norm{\partial_x\bff m_h^{j+1}}{\bb L^2}
		\right)
		\norm{\bff\theta_h^{j+1}}{\bb L^\infty}
		\norm{\partial_x\bff\theta_h^{j+1}}{\bb L^2}
		\nonumber\\
		&\qquad\quad+
		C
		\left(
		\norm{\partial_xI_h\bff m^{j+1}}{\bb L^2}^2
		+
		\norm{\partial_x\bff m_h^{j+1}}{\bb L^2}^2
		\right)
		\norm{\bff\theta_h^{j+1}}{\bb L^\infty}^2
		\nonumber\\
		&\qquad
		\le
		\eta
		\norm{\partial_x\bff\theta_h^{j+1}}{\bb L^2}^2
		+
		C_\eta
		\left(
		1+
		\norm{\partial_xI_h\bff m^{j+1}}{\bb L^2}^4
		+
		\norm{\partial_x\bff m_h^{j+1}}{\bb L^2}^4
		\right)
		\norm{\bff\theta_h^{j+1}}{h}^2,
		\label{eq:endpoint-damping-remainder-first-sllg}
	\end{align}
	where in the last step we used the discrete Agmon inequality~\eqref{eq:discrete-agmon-sllg} and Young's inequality.
	
	For the precession part, we write
	\[
	\begin{aligned}
		&I_h\bff m^{j+1}\times\Delta_hI_h\bff m^{j+1}
		-
		\bff m_h^{j+1}\times\Delta_h\bff m_h^{j+1}
		=
		I_h\bff m^{j+1}\times\Delta_h\bff\theta_h^{j+1}
		+
		\bff\theta_h^{j+1}\times\Delta_h\bff m_h^{j+1}.
	\end{aligned}
	\]
	The second term is orthogonal to $\bff\theta_h^{j+1}$ at every node.
	Hence, we obtain by \eqref{eq:Ih-product-H1-sllg},
	\begin{align}
		&\left|
		\inpro{
			I_h\bff m^{j+1}\times\Delta_hI_h\bff m^{j+1}
			-
			\bff m_h^{j+1}\times\Delta_h\bff m_h^{j+1}
		}{
			\bff\theta_h^{j+1}
		}_h
		\right|
		\nonumber\\
		&\qquad
		=
		\left|
		\inpro{
			\partial_x\bff\theta_h^{j+1}
		}{
			\partial_xI_h\left(
			\bff\theta_h^{j+1}\times I_h\bff m^{j+1}
			\right)
		}
		\right|
		\nonumber\\
		&\qquad
		\le
		\norm{\partial_xI_h\bff m^{j+1}}{\bb L^2}
		\norm{\bff\theta_h^{j+1}}{\bb L^\infty}
		\norm{\partial_x\bff\theta_h^{j+1}}{\bb L^2}
		\nonumber\\
		&\qquad
		\leq
		\eta\norm{\partial_x\bff\theta_h^{j+1}}{\bb L^2}^2
		+
		C_\eta
		\left(
		1+
		\norm{\partial_xI_h\bff m^{j+1}}{\bb L^2}^4
		\right)
		\norm{\bff\theta_h^{j+1}}{h}^2,
		\label{eq:endpoint-precession-remainder-sllg}
	\end{align}
	where in the last step we again applied the discrete Agmon and Young inequalities.	
	Now, on $\Omega_{\Lambda,j}$,
	\[
	\begin{aligned}
		\norm{\partial_xI_h\bff m^{j+1}}{\bb L^2}^4
		&\le
		C_\Lambda
		+
		C\norm{I_h(\bff m^{j+1}-\bff m^j)}{1,h}^4,
		\\
		\norm{\partial_x\bff m_h^{j+1}}{\bb L^2}^4
		&\le
		C_\Lambda
		+
		C\norm{\bff m_h^{j+1}-\bff m_h^j}{1,h}^4.
	\end{aligned}
	\]
	Moreover, the nodal unit-length property implies $\norm{\bff\theta_h^{j+1}}{h}^2\le 4|D|$.
	Combining these estimates with
	\eqref{eq:endpoint-damping-identity-sllg}--
	\eqref{eq:endpoint-precession-remainder-sllg}, and relabelling
	$\eta$, proves \eqref{eq:endpoint-drift-monotonicity-sllg}.
\end{proof}

\begin{lemma}\label{lem:midpoint-endpoint-perturbation-sllg}
	Let $\mathscr A_h$ be the form given by \eqref{eq:discrete-drift-form-sllg} and $\Omega_{\Lambda,j}$ be defined by~\eqref{eq:localisation-set-sllg}.
	For every $\Lambda>0$ and $\eta>0$, there exists
	$C_{\Lambda,\eta}>0$, independent of $h$, $k$, and $j$, such that
	\begin{align}
		&\mathbf1_{\Omega_{\Lambda,j}}
		\Bigg|
		\mathscr A_h\left(
		I_h\overline{\bff m}^{j+\frac12};
		I_h\bff m^{j+1},\bff\theta_h^{j+1}
		\right)
		-
		\mathscr A_h\left(
		I_h\bff m^{j+1};
		I_h\bff m^{j+1},\bff\theta_h^{j+1}
		\right)
		\Bigg|
		\nonumber\\
		&\qquad\le
		\eta\mathbf1_{\Omega_{\Lambda,j}}
		\norm{\partial_x\bff\theta_h^{j+1}}{\bb L^2}^2
		+
		C_{\Lambda,\eta}
		\mathbf1_{\Omega_{\Lambda,j}}
		\norm{\bff\theta_h^{j+1}}{h}^2
		+
		C_{\Lambda,\eta}
		\sum_{\nu=1}^{3}
		\norm{I_h(\bff m^{j+1}-\bff m^j)}{1,h}^{2\nu},
		\label{eq:exact-midpoint-perturbation-local-sllg}
	\end{align}
	and
	\begin{align}
		&\mathbf1_{\Omega_{\Lambda,j}}
		\Bigg|
		\mathscr A_h\left(
		\bff m_h^{j+\frac12};
		\bff m_h^{j+1},\bff\theta_h^{j+1}
		\right)
		-
		\mathscr A_h\left(
		\bff m_h^{j+1};
		\bff m_h^{j+1},\bff\theta_h^{j+1}
		\right)
		\Bigg|
		\nonumber\\
		&\qquad\le
		\eta\mathbf1_{\Omega_{\Lambda,j}}
		\norm{\partial_x\bff\theta_h^{j+1}}{\bb L^2}^2
		+
		C_{\Lambda,\eta}
		\mathbf1_{\Omega_{\Lambda,j}}
		\norm{\bff\theta_h^{j+1}}{h}^2
		+
		C_{\Lambda,\eta}
		\sum_{\nu=1}^{3}
		\norm{\bff m_h^{j+1}-\bff m_h^j}{1,h}^{2\nu}.
		\label{eq:numerical-midpoint-perturbation-local-sllg}
	\end{align}
\end{lemma}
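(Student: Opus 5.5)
The plan is to expand the difference of the drift forms in the first argument only. Write $\bff a:=I_h\overline{\bff m}^{j+\frac12}$, $\bff b:=I_h\bff m^{j+1}$, and $\bff d:=\bff a-\bff b=-\frac12 I_h(\bff m^{j+1}-\bff m^j)$. The second and third arguments coincide, so
\[
\mathscr A_h(\bff a;\bff b,\bff\theta_h^{j+1})-\mathscr A_h(\bff b;\bff b,\bff\theta_h^{j+1})
=\alpha\kappa\inpro{\bff d\times(\bff a\times\Delta_h\bff b)+\bff b\times(\bff d\times\Delta_h\bff b)}{\bff\theta_h^{j+1}}_h
-\kappa\inpro{\bff d\times\Delta_h\bff b}{\bff\theta_h^{j+1}}_h .
\]
Every term is trilinear or quadrilinear and contains exactly one factor $\bff d$ and one factor $\Delta_h\bff b$. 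The numerical estimate \eqref{eq:numerical-midpoint-perturbation-local-sllg} is handled identically, with $\bff a=\bff m_h^{j+\frac12}$, $\bff b=\bff m_h^{j+1}$, and $\bff d=-\frac12(\bff m_h^{j+1}-\bff m_h^j)$. In both cases $\norm{\bff a}{\bb L^\infty},\norm{\bff b}{\bb L^\infty}\le1$ by nodal convexity and Lemma~\ref{lem:nodal-constraint-sllg}.

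Since $\Delta_h\bff b$ has no uniform bound, I would never estimate it in $\norm{\cdot}{h}$. Instead I would move it onto the test function using the scalar triple product nodewise and then the definition \eqref{eq:def-discrete-laplacian-sllg}. This turns, for example, $\inpro{\bff d\times\Delta_h\bff b}{\bff\theta}_h$ into $-\inpro{\partial_x\bff b}{\partial_x I_h(\bff\theta\times\bff d)}$. The double-cross terms similarly become $\inpro{\partial_x\bff b}{\partial_xI_h((\bff\theta\times\bff d)\times\bff a)}$ and $\inpro{\partial_x\bff b}{\partial_xI_h((\bff\theta\times\bff b)\times\bff d)}$, up to sign. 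I would then expand $\partial_x I_h$ of the nodal products elementwise, as in \eqref{eq:Ih-product-H1-sllg}. Each derivative falls on exactly one of $\bff\theta$, $\bff d$, $\bff a$, or $\bff b$, and the other factors are bounded in $\bb L^\infty$. This gives the bound
\[
C\norm{\partial_x\bff b}{\bb L^2}\Bigl(\norm{\bff d}{\bb L^\infty}\norm{\partial_x\bff\theta}{\bb L^2}+\norm{\bff\theta}{\bb L^\infty}\norm{\partial_x\bff d}{\bb L^2}+\norm{\bff\theta}{\bb L^\infty}\norm{\bff d}{\bb L^\infty}(\norm{\partial_x\bff a}{\bb L^2}+\norm{\partial_x\bff b}{\bb L^2})\Bigr).
\]

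Next I would control the quantities here on $\Omega_{\Lambda,j}$. The gradients $\norm{\partial_x\bff a}{\bb L^2}$ and $\norm{\partial_x\bff b}{\bb L^2}$ are at most $C\Lambda^{1/4}+C\norm{\bff d}{1,h}$, using the time-$t_j$ bound on $\Omega_{\Lambda,j}$ together with \eqref{eq:Ih-stability}, as at the end of the proof of Lemma~\ref{lem:endpoint-drift-monotonicity-sllg}. The $\bb L^\infty$ norms are handled by \eqref{eq:discrete-agmon-sllg}: $\norm{\bff d}{\bb L^\infty}\le C\norm{\bff d}{1,h}$ and $\norm{\bff\theta}{\bb L^\infty}^2\le C\norm{\bff\theta}{h}\norm{\partial_x\bff\theta}{\bb L^2}+C\norm{\bff\theta}{h}^2$.

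Each product is then of the form (powers of $\Lambda$ and of $\norm{\bff d}{1,h}$, with total $\bff d$-degree $1$ to $3$) times $\norm{\bff\theta}{h}^{a}\norm{\partial_x\bff\theta}{\bb L^2}^{b}$ with $a+b=2$ and $b\le 3/2$, or $b=1$. I would apply Young's inequality to put $\eta\norm{\partial_x\bff\theta}{\bb L^2}^2+C\norm{\bff\theta}{h}^2$ on one side and powers $\norm{\bff d}{1,h}^{2\nu}$, $\nu\le3$, on the other. The uniform bound $\norm{\bff\theta}{h}\le C$ from the nodal constraint will be used to reduce high powers of $\norm{\bff\theta}{h}$.

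The main obstacle is this bookkeeping. In terms such as $\norm{\partial_x\bff b}{\bb L^2}^2\norm{\bff\theta}{\bb L^\infty}\norm{\bff d}{\bb L^\infty}$, where $\norm{\partial_x\bff b}{\bb L^2}$ itself carries a $\norm{\bff d}{1,h}$ contribution, the $\bff d$-degree can reach $3$. I must check that Young's inequality never requires $\norm{\bff d}{1,h}^{2\nu}$ with $\nu>3$ and never produces a $\partial_x\bff\theta$ power exceeding $2$. My plan is to split $\norm{\bff\theta}{\bb L^\infty}\le C\norm{\bff\theta}{h}^{1/2}\norm{\partial_x\bff\theta}{\bb L^2}^{1/2}+C\norm{\bff\theta}{h}$ and apply a three-factor Young inequality with exponents $(4,4,2)$. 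If needed, I would fall back on $\norm{\bff\theta}{\bb L^\infty}\le 2$, which follows from the nodal constraint, to lower the $\bff\theta$-degree.
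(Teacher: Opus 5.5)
Your proposal is correct and follows essentially the same route as the paper. You expand the difference in the first argument, move $\Delta_h$ onto $I_h(\cdots)$ via the nodal triple product and \eqref{eq:def-discrete-laplacian-sllg}, and apply the elementwise product rule. You then bound the gradients on $\Omega_{\Lambda,j}$ by $C_\Lambda+C\norm{\bff d}{1,h}$ and finish with the discrete Agmon inequality and Young's inequality. Keeping $\bff a$ unexpanded in one damping term is cosmetic. Your explicit $(4,4,2)$ Young split, which produces at most $\norm{\bff d}{1,h}^{6}$, is exactly the bookkeeping the paper leaves implicit. One small correction: before Young, each product is linear in $\bff\theta$, so the exponents of $\norm{\bff\theta}{h}$ and $\norm{\partial_x\bff\theta}{\bb L^2}$ sum to $1$, not $2$; this does not affect the argument.
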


\begin{proof}
	We prove \eqref{eq:exact-midpoint-perturbation-local-sllg}, while noting that \eqref{eq:numerical-midpoint-perturbation-local-sllg} follows by the same argument.
	
	By noting that $I_h\overline{\bff m}^{j+\frac12}
	=
	I_h\bff m^{j+1}
	-\frac12I_h(\bff m^{j+1}-\bff m^j)$, we can then write
	\begin{align}
		&\mathscr A_h\left(
		I_h\overline{\bff m}^{j+\frac12};
		I_h\bff m^{j+1},\bff\theta_h^{j+1}
		\right)
		-
		\mathscr A_h\left(
		I_h\bff m^{j+1};
		I_h\bff m^{j+1},\bff\theta_h^{j+1}
		\right)
		\nonumber\\
		&\quad=
		\frac{\kappa}{2}
		\inpro{
			I_h(\bff m^{j+1}-\bff m^j)
			\times\Delta_hI_h\bff m^{j+1}
		}{
			\bff\theta_h^{j+1}
		}_h
		\nonumber\\
		&\qquad-
		\frac{\alpha\kappa}{2}
		\inpro{
			I_h(\bff m^{j+1}-\bff m^j)
			\times
			\left(
			I_h\bff m^{j+1}
			\times\Delta_hI_h\bff m^{j+1}
			\right)
		}{
			\bff\theta_h^{j+1}
		}_h
		\nonumber\\
		&\qquad-
		\frac{\alpha\kappa}{2}
		\inpro{
			I_h\bff m^{j+1}
			\times
			\left(
			I_h(\bff m^{j+1}-\bff m^j)
			\times\Delta_hI_h\bff m^{j+1}
			\right)
		}{
			\bff\theta_h^{j+1}
		}_h
		\nonumber\\
		&\qquad+
		\frac{\alpha\kappa}{4}
		\inpro{
			I_h(\bff m^{j+1}-\bff m^j)
			\times
			\left(
			I_h(\bff m^{j+1}-\bff m^j)
			\times\Delta_hI_h\bff m^{j+1}
			\right)
		}{
			\bff\theta_h^{j+1}
		}_h.
		\label{eq:exact-midpoint-perturbation-expansion-sllg}
	\end{align}
	Using the definition of $\Delta_h$ in
	\eqref{eq:def-discrete-laplacian-sllg} to integrate by parts discretely, together with the elementwise product
	estimate \eqref{eq:Ih-product-H1-sllg} and the nodal bound \eqref{eq:nodal-constraint-sllg}, we obtain
	\begin{align}
		&\left|
		\mathscr A_h\left(
		I_h\overline{\bff m}^{j+\frac12};
		I_h\bff m^{j+1},\bff\theta_h^{j+1}
		\right)
		-
		\mathscr A_h\left(
		I_h\bff m^{j+1};
		I_h\bff m^{j+1},\bff\theta_h^{j+1}
		\right)
		\right|
		\nonumber\\
		&\qquad\le
		C
		\norm{\partial_xI_h\bff m^{j+1}}{\bb L^2}
		\norm{I_h(\bff m^{j+1}-\bff m^j)}{\bb L^\infty}
		\norm{\partial_x\bff\theta_h^{j+1}}{\bb L^2}
		\nonumber\\
		&\qquad\quad+
		C
		\norm{\partial_xI_h\bff m^{j+1}}{\bb L^2}
		\norm{\bff\theta_h^{j+1}}{\bb L^\infty}
		\norm{\partial_xI_h(\bff m^{j+1}-\bff m^j)}{\bb L^2}
		\nonumber\\
		&\qquad\quad+
		C
		\norm{\partial_xI_h\bff m^{j+1}}{\bb L^2}^2
		\norm{\bff\theta_h^{j+1}}{\bb L^\infty}
		\norm{I_h(\bff m^{j+1}-\bff m^j)}{\bb L^\infty}.
		\label{eq:exact-midpoint-perturbation-before-young-sllg}
	\end{align}
	Now, on $\Omega_{\Lambda,j}$, we can estimate
	\[
	\norm{\partial_xI_h\bff m^{j+1}}{\bb L^2}
	\le
	C_\Lambda
	+
	\norm{I_h(\bff m^{j+1}-\bff m^j)}{1,h},
	\]
	while applying the discrete Agmon inequality to the terms with $\bb{L}^\infty$ norms.
	Young's inequality applied term by term in
	\eqref{eq:exact-midpoint-perturbation-before-young-sllg} then yields~\eqref{eq:exact-midpoint-perturbation-local-sllg}.
	
	To show~\eqref{eq:numerical-midpoint-perturbation-local-sllg}, we use
	$\bff m_h^{j+\frac12}= \bff m_h^{j+1} -\frac12(\bff m_h^{j+1}-\bff m_h^j)$,
	noting that
	\[
	\norm{\partial_x\bff m_h^{j+1}}{\bb L^2}
	\le
	C_\Lambda
	+
	\norm{\bff m_h^{j+1}-\bff m_h^j}{1,h}
	\]
	on $\Omega_{\Lambda,j}$, then repeat the same argument. This completes the proof.
\end{proof}

The above two lemmas are used to derive the following estimate.

\begin{lemma}\label{lem:local-drift-summable-defect-sllg}
	Let $\mathscr A_h$ be the form given by \eqref{eq:discrete-drift-form-sllg} and $\Omega_{\Lambda,j}$ be defined by~\eqref{eq:localisation-set-sllg}.
	For every fixed $\Lambda>0$, there exist $c_0>0$ and $C_\Lambda>0$,
	independent of $h$, $k$, and $j$, such that
	\begin{align}
		&k\mathbf1_{\Omega_{\Lambda,j}}
		\left[
		\mathscr A_h\left(
		I_h\overline{\bff m}^{j+\frac12};
		I_h\bff m^{j+1},\bff\theta_h^{j+1}
		\right)
		-
		\mathscr A_h\Bigl(
		\bff m_h^{j+\frac12};
		\bff m_h^{j+1},\bff\theta_h^{j+1}
		\Bigr)
		\right]
		\nonumber\\
		&\qquad\ge
		c_0k\mathbf1_{\Omega_{\Lambda,j}}
		\norm{\partial_x\bff\theta_h^{j+1}}{\bb L^2}^2
		-
		C_\Lambda k\mathbf1_{\Omega_{\Lambda,j}}
		\norm{\bff\theta_h^{j+1}}{h}^2
		\nonumber\\
		&\qquad\quad-
		C_\Lambda k
		\sum_{\nu=1}^{3}
		\left(
		\norm{I_h(\bff m^{j+1}-\bff m^j)}{1,h}^{2\nu}
		+
		\norm{\bff m_h^{j+1}-\bff m_h^j}{1,h}^{2\nu}
		\right).
		\label{eq:local-drift-summable-defect-sllg}
	\end{align}
	Moreover, for every $\gamma\in(0,\frac12)$, there exists $C_\gamma>0$ such that
	\begin{align}
		&\mathbb E\left[
		k\sum_{j=0}^{J-1}
		\sum_{\nu=1}^{3}
		\left(
		\norm{I_h(\bff m^{j+1}-\bff m^j)}{1,h}^{2\nu}
		+
		\norm{\bff m_h^{j+1}-\bff m_h^j}{1,h}^{2\nu}
		\right)
		\right]
		\le
		C_\gamma k^{2\gamma}.
		\label{eq:Xi-increment-defect-sum-sllg}
	\end{align}
\end{lemma}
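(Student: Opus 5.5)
For \eqref{eq:local-drift-summable-defect-sllg} I will telescope the drift difference through the endpoint forms:
\begin{align*}
&\mathscr A_h(I_h\overline{\bff m}^{j+\frac12};I_h\bff m^{j+1},\bff\theta_h^{j+1})-\mathscr A_h(\bff m_h^{j+\frac12};\bff m_h^{j+1},\bff\theta_h^{j+1})\\
&\quad=\bigl[\mathscr A_h(I_h\overline{\bff m}^{j+\frac12};I_h\bff m^{j+1},\bff\theta_h^{j+1})-\mathscr A_h(I_h\bff m^{j+1};I_h\bff m^{j+1},\bff\theta_h^{j+1})\bigr]\\
&\qquad+\bigl[\mathscr A_h(I_h\bff m^{j+1};I_h\bff m^{j+1},\bff\theta_h^{j+1})-\mathscr A_h(\bff m_h^{j+1};\bff m_h^{j+1},\bff\theta_h^{j+1})\bigr]\\
&\qquad+\bigl[\mathscr A_h(\bff m_h^{j+1};\bff m_h^{j+1},\bff\theta_h^{j+1})-\mathscr A_h(\bff m_h^{j+\frac12};\bff m_h^{j+1},\bff\theta_h^{j+1})\bigr].
\end{align*}
After multiplying by $\mathbf1_{\Omega_{\Lambda,j}}$, the middle bracket is bounded below by Lemma~\ref{lem:endpoint-drift-monotonicity-sllg}. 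The two outer brackets are bounded in absolute value by \eqref{eq:exact-midpoint-perturbation-local-sllg} and \eqref{eq:numerical-midpoint-perturbation-local-sllg} of Lemma~\ref{lem:midpoint-endpoint-perturbation-sllg}. All three lemmas are used with the same $\eta=\alpha\kappa/6$, so the gradient coefficient is at least $\alpha\kappa-3\eta=\alpha\kappa/2=:c_0$.

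The fourth-power increment terms from Lemma~\ref{lem:endpoint-drift-monotonicity-sllg} are the $\nu=2$ summands. I then multiply by $k$. In the increment terms the indicator is dropped, since they are nonnegative and only appear with a minus sign. This gives \eqref{eq:local-drift-summable-defect-sllg} with $C_\Lambda$ the sum of the $C_{\Lambda,\eta}$.

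For \eqref{eq:Xi-increment-defect-sum-sllg} I treat the exact and discrete increments separately.

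\emph{Exact increment.} By the $\bb H^1$-stability \eqref{eq:Ih-stability}, \eqref{eq:lumped-L2-equivalence-sllg} and the $\bb L^2$-stability of $I_h$ on $\bb H^1$ in 1D, we have $\norm{I_h(\bff m^{j+1}-\bff m^j)}{1,h}\le C\norm{\bff m^{j+1}-\bff m^j}{\bb H^1}\le Ck^{\gamma}\norm{\bff m}{C^\gamma([0,T];\bb H^2)}$. Hence
\[
k\sum_{j}\norm{I_h(\bff m^{j+1}-\bff m^j)}{1,h}^{2\nu}\le CTk^{2\nu\gamma}\norm{\bff m}{C^\gamma}^{2\nu}.
\]
Taking expectations and using \eqref{eq:exact-solution-regularity-sllg} bounds this by $C_\gamma k^{2\gamma}$ for $\nu=1,2,3$, since $k\le1$.

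\emph{Discrete increment.} Set $a_j:=\norm{\bff m_h^{j+1}-\bff m_h^j}{1,h}^2$. Since $\sum_j a_j^\nu\le(\sum_j a_j)^\nu$ for $\nu\ge1$, Lemma~\ref{lem:full-H1-increment-moments-sllg} gives
\[
\mathbb E\Bigl[k\sum_j a_j^\nu\Bigr]\le k\,\mathbb E\Bigl[\bigl(\textstyle\sum_j a_j\bigr)^\nu\Bigr]\le C_\nu k\le Ck^{2\gamma},
\]
because $2\gamma<1$ and $k\le1$. Here I use that Proposition~\ref{prop:p-moment-stability-sllg} is applied with $\mathbb E\norm{\partial_x\bff m_h^0}{\bb L^2}^{2p}\le C\norm{\bff m_0}{\bb H^1}^{2p}$ via \eqref{eq:Ih-stability}.

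The main obstacle is the mismatch between $\Omega_{\Lambda,j}$, which controls gradients only up to time $t_j$, and the quantities at time $t_{j+1}$ in the two earlier lemmas. Those lemmas already absorb this mismatch into the increment powers, so here the only real care needed is to take a single common $\eta$ so that $c_0>0$ is independent of $\Lambda$, and to allow the $\nu=3$ power coming from the squared $C_\Lambda+\norm{\cdot}{1,h}$ factor in \eqref{eq:exact-midpoint-perturbation-before-young-sllg}.
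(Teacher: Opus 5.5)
Your proposal is correct and follows essentially the same route as the paper. You use the same splitting through the endpoint forms, handled by Lemma~\ref{lem:endpoint-drift-monotonicity-sllg} and Lemma~\ref{lem:midpoint-endpoint-perturbation-sllg} with a common small $\eta$ giving $c_0=\frac12\alpha\kappa$, and you bound the increment sums the same way: via $\bb H^1$-stability of $I_h$ with the H\"older regularity for the exact increments, and via Lemma~\ref{lem:full-H1-increment-moments-sllg} with $\sum_j a_j^\nu\le(\sum_j a_j)^\nu$ for the discrete ones.
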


\begin{proof}
	We add and subtract the two endpoint drift terms to write
	\begin{align*}
		&\mathscr A_h\left(
		I_h\overline{\bff m}^{j+\frac12};
		I_h\bff m^{j+1},\bff\theta_h^{j+1}
		\right)
		-
		\mathscr A_h\left(
		\bff m_h^{j+\frac12};
		\bff m_h^{j+1},\bff\theta_h^{j+1}
		\right)
		\\
		&\quad=
		\mathscr A_h\left(
		I_h\bff m^{j+1};
		I_h\bff m^{j+1},\bff\theta_h^{j+1}
		\right)
		-
		\mathscr A_h\left(
		\bff m_h^{j+1};
		\bff m_h^{j+1},\bff\theta_h^{j+1}
		\right)
		\\
		&\qquad+
		\Bigg[
		\mathscr A_h\left(
		I_h\overline{\bff m}^{j+\frac12};
		I_h\bff m^{j+1},\bff\theta_h^{j+1}
		\right)
		-
		\mathscr A_h\left(
		I_h\bff m^{j+1};
		I_h\bff m^{j+1},\bff\theta_h^{j+1}
		\right)
		\Bigg]
		\\
		&\qquad-
		\Bigg[
		\mathscr A_h\left(
		\bff m_h^{j+\frac12};
		\bff m_h^{j+1},\bff\theta_h^{j+1}
		\right)
		-
		\mathscr A_h\left(
		\bff m_h^{j+1};
		\bff m_h^{j+1},\bff\theta_h^{j+1}
		\right)
		\Bigg].
	\end{align*}
	We can now apply Lemma~\ref{lem:endpoint-drift-monotonicity-sllg} to the first term on the right-hand side and Lemma~\ref{lem:midpoint-endpoint-perturbation-sllg} to the
	two remaining differences. Choosing $\eta>0$ sufficiently small, for
	instance $3\eta\le\frac12\alpha\kappa$, gives
	\eqref{eq:local-drift-summable-defect-sllg} with
	$c_0=\frac12\alpha\kappa$ after enlarging $C_\Lambda$.
	
	It remains to prove~\eqref{eq:Xi-increment-defect-sum-sllg}. By the
	$\bb H^1$-stability of $I_h$ and
	\eqref{eq:exact-solution-regularity-sllg}, for $\nu=1,2,3$, we have
	\begin{align}
		\mathbb E\left[
		k\sum_{j=0}^{J-1}
		\norm{I_h(\bff m^{j+1}-\bff m^j)}{1,h}^{2\nu}
		\right]
		\le
		C
		\mathbb E\left[
		k\sum_{j=0}^{J-1}
		\norm{\bff m^{j+1}-\bff m^j}{\bb H^1}^{2\nu}
		\right]
		\le
		C_\gamma k^{2\nu\gamma}
		\le
		C_\gamma k^{2\gamma}.
		\label{eq:exact-increment-polynomial-sum-sllg}
	\end{align}
	Moreover, for the numerical increments, by Lemma~\ref{lem:full-H1-increment-moments-sllg} we have
	\begin{align}
		\mathbb E\left[
		k\sum_{j=0}^{J-1}
		\norm{\bff m_h^{j+1}-\bff m_h^j}{1,h}^{2\nu}
		\right]
		\le
		k\,
		\mathbb E\left[
		\left(
		\sum_{j=0}^{J-1}
		\norm{\bff m_h^{j+1}-\bff m_h^j}{1,h}^2
		\right)^\nu
		\right]
		\le
		C_\nu k.
		\label{eq:num-increment-polynomial-sum-sllg}
	\end{align}
	Since $0<k\le1$ and $2\gamma<1$, the required estimate~\eqref{eq:Xi-increment-defect-sum-sllg} follows from~\eqref{eq:exact-increment-polynomial-sum-sllg} and \eqref{eq:num-increment-polynomial-sum-sllg}.
\end{proof}



\newcommand{\noopsort}[1]{}\def\cprime{$'$}
\def\soft#1{\leavevmode\setbox0=\hbox{h}\dimen7=\ht0\advance \dimen7
	by-1ex\relax\if t#1\relax\rlap{\raise.6\dimen7
		\hbox{\kern.3ex\char'47}}#1\relax\else\if T#1\relax
	\rlap{\raise.5\dimen7\hbox{\kern1.3ex\char'47}}#1\relax \else\if
	d#1\relax\rlap{\raise.5\dimen7\hbox{\kern.9ex \char'47}}#1\relax\else\if
	D#1\relax\rlap{\raise.5\dimen7 \hbox{\kern1.4ex\char'47}}#1\relax\else\if
	l#1\relax \rlap{\raise.5\dimen7\hbox{\kern.4ex\char'47}}#1\relax \else\if
	L#1\relax\rlap{\raise.5\dimen7\hbox{\kern.7ex
			\char'47}}#1\relax\else\message{accent \string\soft \space #1 not
		defined!}#1\relax\fi\fi\fi\fi\fi\fi}

\end{document}